\setlist[enumerate]{itemsep=0mm}
\theoremstyle{plain}
\declaretheorem[title=Theorem, parent=section]{theorem}
\declaretheorem[title=Lemma,sibling=theorem]{lemma}
\declaretheorem[title=Proposition,sibling=theorem]{proposition}
\declaretheorem[title=Corollary,sibling=theorem]{corollary}
\theoremstyle{definition}
\declaretheorem[title=Definition,sibling=theorem]{definition}
\declaretheorem[title=Remark,sibling=theorem]{remark}
\declaretheorem[title=Remark, numbered=no]{remark*}
\declaretheorem[title=Example, sibling=theorem]{example}
\declaretheorem[title=Assumption, numbered=no]{assumption*}
\declaretheorem[title=Assumption]{assumption}
\numberwithin{equation}{section}
\newcommand{\N}{\mathds{N}}
\newcommand{\R}{\mathds{R}}
\newcommand{\Z}{\mathds{Z}}
\def\hmath$#1${\texorpdfstring{{\rmfamily\textit{#1}}}{#1}}
\newcommand{\cE}{\mathcal{E}}
\newcommand{\cEs}{\cE^{K_s}}
\newcommand{\eps}{\varepsilon}
\newcommand{\diag}{\mathrm{diag}}
\newcommand{\Cn}{C^{(n)}}
\newcommand{\Cns}{C^{(n)}_s}
\newcommand{\Cna}{C^{(n)}_a}
\newcommand{\nZd}{n^{-1}\Z^d}
\newcommand{\Xn}{X^{(n)}}
\newcommand{\cEn}{\cE^{(n)}}
\newcommand{\cEns}{\cE^{(n),\Cns}}
\newcommand{\cEna}{\cE^{(n),\Cna}}
\newcommand{\Bn}{B^{(n)}}
\newcommand{\PP}{\mathds{P}}
\newcommand{\EE}{\mathds{E}}
\newcommand{\BIGOP}[1]
{
\mathop{\mathchoice%
{\raise-0.22em\hbox{\huge $#1$}}%
{\raise-0.05em\hbox{\Large $#1$}}{\hbox{\large $#1$}}{#1}}}
\def\Xint#1{\mathchoice
   {\XXint\displaystyle\textstyle{#1}}%
   {\XXint\textstyle\scriptstyle{#1}}%
   {\XXint\scriptstyle\scriptscriptstyle{#1}}%
   {\XXint\scriptscriptstyle\scriptscriptstyle{#1}}%
   \!\int}
\def\XXint#1#2#3{{\setbox0=\hbox{$#1{#2#3}{\int}$}
     \vcenter{\hbox{$#2#3$}}\kern-.5\wd0}}
\def\dashint{\Xint-}
\newcommand{\BIGboxplus}{\mathop{\mathchoice%
{\raise-0.35em\hbox{\huge $\boxplus$}}%
{\raise-0.15em\hbox{\Large $\boxplus$}}{\hbox{\large $\boxplus$}}{\boxplus}}}
\DeclareMathOperator{\dist}{dist}
\DeclareMathOperator{\supp}{supp}
\DeclareMathOperator{\pv}{p.v.}
\renewcommand{\Bbb}{\mathbb}
\renewcommand{\d}{\textnormal{d}}
\begin{document}
\allowdisplaybreaks
 \title{Markov Chain Approximations for nonsymmetric Processes}
 
 \author{Marvin Weidner}

\address{Fakult\"{a}t f\"{u}r Mathematik\\Universit\"{a}t Bielefeld\\Postfach 100131\\D-33501 Bielefeld}
\email{mweidner@math.uni-bielefeld.de}

\keywords{Markov chain, approximation, Dirichlet form, nonlocal, nonsymmetric, drift}

\thanks{Marvin Weidner gratefully acknowledges financial support by the German Research Foundation (GRK 2235 - 282638148). Moreover, he would like to thank Moritz Kassmann for helpful discussions.}

\subjclass[2010]{47G20, 60F05, 60B10, 60J74, 60J27, 60J46}

\allowdisplaybreaks

\begin{abstract}
The aim of this article is to prove that diffusion processes in $\R^d$ with a drift  can be approximated by suitable Markov chains on $\nZd$. Moreover, we investigate sufficient conditions on the conductances which guarantee convergence of the associated Markov chains to such Markov processes. Analogous questions are answered for a large class of nonsymmetric jump processes. The proofs of our results rely on regularity estimates for weak solutions to the corresponding nonsymmetric parabolic equations and Dirichlet form techniques.
\end{abstract}

\allowdisplaybreaks

\maketitle

\section{Introduction}

The goal of this article is to establish approximations of nonsymmetric diffusions and jump processes in $\R^d$ by Markov chains on $\nZd$ with generators of the form
\begin{align}
\label{eq:mcgen}
L^{(n)} u(x) = 2n^{\alpha}\sum_{y \in \nZd} (u(y)-u(x))C^{(n)}(x,y),~~ x \in \nZd,
\end{align}
where $\alpha \in (0,2]$. Here, $(\Cn)_n$ is a family of conductances $\Cn : \nZd \times \nZd \to [0,\infty)$, $n \in \N$, that are not necessarily symmetric.
The emphasis of this article lies on the lack of symmetry of the conductances under consideration, which causes the limiting process to possess a drift.

To be precise, in this work we investigate the following two questions:

\begin{itemize}
\item[(i)] 
Under what assumptions on $(\Cn)_n$ do the Markov chains $\Xn$ on $\nZd$ with generators $L^{(n)}$ defined as in \eqref{eq:mcgen} converge weakly towards a Markov process $X$ on $\R^d$ with generator $L$ being of one of the two forms
\begin{align}
\label{eq:mpgenl}
&L u(x) = \partial_i (a_{i,j}(x)\partial_j u(x)) - 2b_i(x)\partial_i u(x),\\
\label{eq:mpgennl}
&L u(x) = 2\pv\int_{\R^d}(u(y)-u(x))K(x,y) \d y,
\end{align}
where $a_{i,j},b_i : \R^d \to \R$ for $i,j \in \{1,\dots,d\}$ with $a_{i,j} = a_{j,i}$, and $K : \R^d \times \R^d \to [0,\infty]$ is a nonsymmetric jumping kernel which satisfies a sector-type condition?\\

\item[(ii)] Let either $(a_{i,j})_{i,j}, (b_i)_{i}$, or $K$ be as above, and $X$ be the corresponding Markov process on $\R^d$ with generator given by \eqref{eq:mpgenl} or \eqref{eq:mpgennl}. Under what assumptions on these objects can we find $(\Cn)_n$ such that the sequence of Markov chains $(\Xn)_n$ with generators given by \eqref{eq:mcgen} converges to $X$?
\end{itemize}

Our main results \autoref{thm:CLTl} and \autoref{thm:CLTnl} answer question (i). (ii) is addressed in \autoref{thm:approxl} and \autoref{thm:approxnl}.
Generally speaking, question (i) asks for conditions under which the Markov chains $\Xn$ on $\nZd$ converge towards some Markov process on $\R^d$ and question (ii) deals with the possibility to approximate a given Markov process on $\R^d$ by a family of Markov chains. Thereby, such approximation yields a scheme for the construction of diffusions, respectively jump processes on $\R^d$. 

Questions (i) and (ii) have a long history and have been answered in various contexts in the symmetric case. Stroock and Varadhan (see \cite{SV79}) provide answers to both types of questions for Markov processes $X$ that are generated by non-divergence form operators. \cite{StZh97} is the first article to investigate problems (i) and (ii) for symmetric divergence form operators of second order. They use the regular Dirichlet forms associated to $\Xn$ in order to show convergence and to identify the limiting process. This is rendered possible by proving a priori heat kernel bounds and uniform in $n$ H\"older estimates for solutions to the heat equation on $\nZd$ using the ideas of De Giorgi-Nash-Moser. Their method allows for symmetric conductances $\Cn$ of bounded range under some continuity condition, yielding diffusion processes in the limit with generators of the form \eqref{eq:limitforml} and $b_i \equiv 0$. Moreover, they provide an explicit construction of approximating Markov chains for a given symmetric diffusion matrix $a_{i,j}$. \cite{BaKu08} extends these ideas, which allows them to deal also with unbounded conductances under a second moment condition.\\
Markov chain approximations of reversible jump processes with generators of the form \eqref{eq:mpgennl} have been considered for the first time in \cite{HuKa07}. Their approach follows the program laid out by \cite{BaKu08} and allows for the approximation of $\alpha$-stable like processes whose jumping kernels are of the form
\begin{align}
\label{eq:introex}
K(x,y) = c(x,y)\vert x-y\vert^{-d-\alpha}, ~~ 0 < \lambda^{-1} \le c(x,y) = c(y,x) \le \lambda, ~~\forall x,y \in \R^d,
\end{align}
for some $\alpha \in (0,2)$ and $\lambda > 0$. Their approach also features jump processes with certain anisotropic jumping kernels that do not satisfy a uniform lower bound but still allow for H\"older estimates of the heat kernel. Moreover, they give a full answer to (ii) for a large class of limiting processes. More general anisotropies can be considered by applying the results of \cite{BKK10}. \cite{Xu13} proves Markov chain approximations for singular stable-like processes, i.e., processes with generators of the form \eqref{eq:limitformnl} but with $K$ being supported only on a $\lambda^d$-null set. Another approach to Markov chain approximations of a large class of symmetric Markov jump processes is developed in \cite{CKK13}. They establish convergence of the finite dimensional distributions by showing a Mosco convergence result for nonsymmetric forms and prove tightness with the help of the Lyons-Zheng decomposition, avoiding the proof of Hölder regularity estimates.\\
Several of the aforementioned results are included in the central limit theorem provided in \cite{BKU10}, where symmetric diffusion processes with jumps are considered without any continuity assumptions on $\Cn$.\\

As opposed to the works mentioned above, in this article we deal with nonsymmetric conductances $\Cn$, which causes the corresponding bilinear forms to be merely regular lower bounded semi-Dirichlet forms. We construct both, nonsymmetric diffusions and jump processes on $\R^d$ via approximation of Markov chains. The corresponding generators in the diffusion case (see \eqref{eq:mpgenl}) might possess drift terms $b$ with $|b|^2 \in L^{\theta}(\R^d)$ for some $\theta \in (\frac{d}{2},\infty]$. The main results are \autoref{thm:CLTl} and \autoref{thm:approxl}. Our results on jump processes (see \autoref{thm:CLTnl} and \autoref{thm:approxnl}) take into account nonsymmetric jumping kernels $K$ satisfying a sector condition.\\
Let us compare the main contributions of this article to existing results from the literature dealing with generators related to nonsymmetric forms as in \eqref{eq:mpgenl} and \eqref{eq:mpgennl}:

For limiting processes corresponding to second order divergence form operators, questions (i) and (ii) have been investigated in \cite{DeKu13}, deriving a priori bounds on the heat kernel and H\"older estimates for a class of centered random walks on $\nZd$ (see \cite{Mat06}). Such Markov chains admit a decomposition into cycles of bounded range and length and are governed by conductances $\Cn$ that are not necessarily symmetric but constant along each cycle. Although this class of Markov chains appears to be very specialized, it turns out that it is rich enough to approximate any given diffusion process with a possibly nonsymmetric diffusion matrix $a_{i,j}$.\\
While \cite{DeKu13} considers nonsymmetric diffusion matrices but does not treat operators with drift terms, our method allows for nonsymmetric contributions of lower order, giving rise to nontrivial drifts but restricting ourselves to symmetric diffusion matrices $a_{i,j}$. However we expect a combination of the techniques from \cite{DeKu13} and those from this article to be possible.

Markov chain approximations of nonsymmetric pure jump processes have been established in \cite{MSS18}. The authors apply an entirely different approach, which is inspired by \cite{CKK13} and not based on the derivation of H\"older estimates. They show convergence of the finite dimensional distributions using Mosco convergence for nonsymmetric forms and prove tightness via a semimartingale approach. The generators of the limiting processes are of the form \eqref{eq:mpgennl} and $K$ may be nonsymmetric and as in \eqref{eq:introex} but as some additional regularity is required for $h \mapsto (K(x,x+h) + K(x+h,x))$, their result is more in the flavor of \cite{SV79} although Dirichlet form techniques are carried out.\\
Our results lie somewhat complementary to \cite{MSS18}. Compared to \cite{MSS18}, we do not have to impose the aforementioned continuity condition on $K$ but on the other hand, \cite{MSS18} does not rely on H\"older estimates and therefore also works in situations where such estimates are not available. We refer to \cite{CKK13} for a discussion of this phenomenon and to \cite{BBCK09} for a related example.

Let us comment on the strategy of our proof. We prove convergence of Markov chains $\Xn$, following the framework constructed in \cite{StZh97}, \cite{BaKu08}, \cite{HuKa07}, \cite{BKU10}, \cite{DeKu13}. However, regularity estimates and upper bounds for exit times are not derived from heat kernel estimates but are shown to follow from weak parabolic Harnack inequalities which can be derived using the same  strategy as in \cite{KaWe22}. The underlying techniques are purely analytic and do not rely on the corresponding stochastic process. We establish exit time estimates and thus tightness of the laws of $\Xn$ by iterating survival estimates that hold uniformly in $\Xn$, adapting the arguments in \cite{Bos20}. Such estimates are a useful tool for the derivation of off-diagonal heat kernel bounds (see \cite{GHL09} \cite{GHL14}, \cite{GHH17}, \cite{GHH18}) for symmetric Markov processes via a purely analytic technique that is based on parabolic maximum principles. Due to the lack of symmetry in our setup, special care is required since the dual semigroup in general does not satisfy the Markov property. In our investigation, we establish a parabolic maximum principle for nonsymmetric operators and come up with another proof of the upper exit time estimate based on the weak Harnack inequality. Moreover, all of the aforementioned results are shown to hold true under abstract unifying assumptions (see \autoref{sec:assumptions}) which allow us to treat the cases  $\alpha =2$ and $\alpha \in (0,2)$ simultaneously. In particular, no truncation of long range conductances is needed.\\
With tightness and H\"older estimates at hand, it remains to prove that all subsequences converge to the same limiting process $X$. We achieve this by investigation of the resolvents of the corresponding Dirichlet forms. Here, we analyze the two cases $\alpha=2$ (with bounded range), and $\alpha \in (0,2)$ separately and also provide answers to question (ii), building upon results in \cite{DeKu13}, \cite{MSS18}.
Our main results are \autoref{thm:CLTl}, \autoref{thm:approxl} (case $\alpha = 2$), as well as \autoref{thm:CLTnl}, \autoref{thm:approxnl} (case $\alpha \in (0,2)$).

We conclude this introduction by emphasizing that most of our methods are robust with respect to degeneracy, unboundedness and irregularity of coefficients. This opens the door to the consideration of homogenization problems for irreversible Markov chains on random media. Quenched invariance principles for symmetric random conductance models with bounded, respectively long range and limiting generators of the form \eqref{eq:mpgenl}, respectively \eqref{eq:mpgennl} can be found in \cite{Bis11}, \cite{ABDH13}, \cite{ADS15}, respectively \cite{BKU10}, \cite{CKK13}, \cite{CKW20c}, \cite{CKW21d}, \cite{FH20}, \cite{Bos20}, \cite{BCKW21}. \\
Moreover, let us point to a related direction of research, namely homogenization problems for local, respectively nonlocal operators with random coefficients. We mention the following articles addressing local operators: \cite{PaVa81}, \cite{Osa83}, \cite{CSW05} and \cite{ChDe16}. More information on this topic can be found in the references therein. Homogenization of symmetric nonlocal operators was e.g., studied in \cite{CCKW21}, \cite{CCKW21b}, \cite{KPZ19}, \cite{FHS19}, \cite{Sch13}, and \cite{ScUe21}. Note that \cite{KPZ19} also contains some results on nonsymmetric nonlocal operators, similar to those in our setup, in case $\alpha < 1$.

\subsection{Outline}
This article is structured as follows: In \autoref{sec:prelim} we construct the Markov chains under consideration and collect some facts about the associated bilinear forms, semigroups and resolvents. We state and discuss the main assumptions of this article in \autoref{sec:assumptions}. \autoref{sec:sol} contains the main technical results needed for convergence including weak Harnack inequalities, H\"older estimates and a weak parabolic maximum principle. The proof of upper exit time estimates, which yield tightness of the laws of $(\Xn)$, is contained in \autoref{sec:tightness}. Convergence of $(\Xn)$, as well as the main results (see \autoref{thm:CLTl}, \autoref{thm:approxl}, \autoref{thm:CLTnl} and \autoref{thm:approxnl}) are stated and proved in \autoref{sec:convergence}.

\section{Preliminaries}
\label{sec:prelim}

The goal of this section is to explain how to associate a family of conductances $(\Cn)$ on $\nZd \times \nZd$ with a family of Markov chains $(\Xn)$ under a suitable assumption on $(\Cn)$ (see \eqref{eq:ass0s}). We choose to introduce $(\Xn)$ as the unique family of Hunt processes associated with the regular lower bounded semi-Dirichlet forms on $L^2(\nZd)$ that are induced by $(\Cn)$. Moreover, we introduce the corresponding heat semigroup and resolvent operators in the sense of \cite{Osh13} and discuss the main assumptions of this article. 

Let us fix $\alpha \in (0,2]$, $n \in \N$ and a family of conductances $\Cn : \nZd \times \nZd \to [0,\infty)$, $n \in \N$ that is not necessarily symmetric, i.e., $\Cn(x,y) \neq \Cn(y,x)$, and satisfies
\begin{align}
\label{eq:ass0}
\Cn(x) := \sum_{y \in \nZd} \Cn(x,y) \le c,
\end{align}
for some $c > 0$ that is independent of $x$, and $\Cn(x,x) = 0$ for every $x \in \nZd$. Under this assumption, the conductances $\Cn$ give rise to the operator $(L^{(n)},\mathcal{D}(L^{(n)}))$ on $L^2(\nZd)$  defined by
\begin{align*}
-L^{(n)} u(x) &= 2n^{\alpha}\sum_{y \in \nZd} (u(x)-u(y))C^{(n)}(x,y),~~ x \in \nZd,~~ u \in \mathcal{D}(L^{(n)}),\\
\mathcal{D}(L^{(n)}) &= \left\lbrace f : \nZd \to \R : \sum_{y \in \nZd} \vert f(y) \vert \Cn(x,y) < \infty, ~~\forall x \in \nZd \right\rbrace,
\end{align*}
where $\alpha \in (0,2]$. Here, $L^2(\nZd) = L^2(\nZd,\mu^{(n)})$,  and $\mu^{(n)}(\{x\}) = n^{-d}$. We denote the scalar product on $L^2(\nZd)$ by $<\cdot,\cdot>_{L^2(\nZd)} = <\cdot,\cdot>$.\\
Via the identity $<-L^{(n)} u,v> = \cEn(u,v)$, we associate this operator with the bilinear form
\begin{align*}
\cEn(u,v) = 2n^{\alpha-d}\sum_{x \in \nZd} \sum_{y \in \nZd} (u(x)-u(y))v(x) \Cn(x,y), ~~ u,v \in L^2(\nZd).
\end{align*}
First, we decompose $\Cn = \Cns + \Cna$ into its symmetric part $\Cns$ and its antisymmetric part $\Cna$ defined by
\begin{align*}
\Cns(x,y) = \frac{\Cn(x,y) + \Cn(y,x)}{2}, \quad \Cna(x,y) = \frac{\Cn(x,y) - \Cn(y,x)}{2}, ~~ x,y \in \nZd.
\end{align*}
Then, we observe that we can rewrite $\cEn$ in terms of $\Cns$ and $\Cna$ as follows:
\begin{align*}
\cEn(u,v) &= 2n^{\alpha-d}\sum_{x \in \nZd}\sum_{y \in \nZd} (u(x)-u(y))v(x) \Cns(x,y)\\
&+2n^{\alpha-d}\sum_{x \in \nZd}\sum_{y \in \nZd} (u(x)-u(y))v(x) \Cna(x,y)\\
&=: \cEns(u,v) + \cEna(u,v).
\end{align*}
Since by construction $\Cns(x,y) = \Cns(y,x)$ and $\Cna(x,y) = -\Cna(y,x)$, it holds:
\begin{align*}
&\cEns(u,v) = n^{\alpha-d}\sum_{x \in \nZd}\sum_{y \in \nZd} (u(x)-u(y))(v(x)-v(y)) \Cns(x,y),\\
&\cEna(u,v) = n^{\alpha-d}\sum_{x \in \nZd}\sum_{y \in \nZd}  (u(x)-u(y))(v(x)+v(y)) \Cna(x,y).
\end{align*}

\begin{lemma}
\label{lemma:markov-welldef}
Assume that for every $n \in \N$, $\Cn(x,x) = 0$ for every $x \in \nZd$, and:
\begin{align}
\label{eq:ass0s}
\sup_{x \in \nZd} \sum_{y \in \nZd} \Cns(x,y) < \infty.
\end{align}
\begin{itemize}
\item[(i)] Then it holds $\cEn(u,v) < \infty$ for every $u,v \in L^2(\nZd)$. Moreover, $(\cEn,L^2(\nZd))$ is a regular lower-bounded semi-Dirichlet form on $L^2(\nZd)$.
\item[(ii)] The generator of $(\cEn,L^2(\nZd))$ is given by $(L^{(n)},L^2(\nZd))$. Moreover, $L^{(n)}$ is bounded in $L^2(\nZd)$.
\end{itemize}
\end{lemma}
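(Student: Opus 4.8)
Throughout fix $n\in\N$ and write $M:=\sup_{x\in\nZd}\sum_{y\in\nZd}\Cns(x,y)<\infty$ and $\|\cdot\|:=\|\cdot\|_{L^2(\nZd)}$. The key preliminary observation is that \eqref{eq:ass0s} already yields two–sided control on $\Cn$ itself: since $\Cn\ge 0$ we have $\Cn(x,y)\le\Cn(x,y)+\Cn(y,x)=2\Cns(x,y)$ and $|\Cna(x,y)|\le\Cns(x,y)$, hence $\sup_x\sum_y\Cn(x,y)\le 2M$ and, by symmetry of $\Cns$, also $\sup_y\sum_x\Cn(x,y)\le 2M$; in particular \eqref{eq:ass0} holds with $c=2M$. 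Writing $-L^{(n)}=2n^{\alpha}(\Cn(\cdot)\Id-P)$ with $Pf(x):=\sum_y f(y)\Cn(x,y)$, the operator $P$ is bounded on $L^2(\nZd)$ by Schur's test (using the two bounds just obtained) and $f\mapsto\Cn(\cdot)f$ is bounded since $\Cn(\cdot)$ is bounded; thus $L^{(n)}$ is a bounded operator on $L^2(\nZd)$. Moreover, by Cauchy--Schwarz and the same two–sided bounds the double sum defining $\cEn(u,v)$ converges absolutely for $u,v\in L^2(\nZd)$, so $\cEn(u,v)=\langle -L^{(n)}u,v\rangle\in\R$ for all such $u,v$. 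This already gives the finiteness statement in (i) and the boundedness statement in (ii), and — once $(\cEn,L^2(\nZd))$ is known to be a coercive closed form — identifies its generator with $L^{(n)}$ on the domain $L^2(\nZd)$.

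Next I would check that $(\cEn,L^2(\nZd))$ is a lower–bounded coercive closed form. Bilinearity is immediate and $|\cEn(u,v)|\le\|L^{(n)}\|\,\|u\|\,\|v\|$. For lower–boundedness, the decomposition recorded before the lemma gives $\cEn(u,u)=\cEns(u,u)+\cEna(u,u)$ with $\cEns(u,u)\ge 0$, while relabeling the summation variables turns $\cEna(u,u)$ into $2n^{\alpha-d}\sum_x u(x)^2\sum_y\Cna(x,y)$; since $\big|\sum_y\Cna(x,y)\big|\le\sum_y\Cns(x,y)\le M$, this yields $\cEn(u,u)\ge -2n^{\alpha}M\|u\|^2$, i.e.\ lower–boundedness with $\beta_0:=2n^{\alpha}M$. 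Consequently, for $\beta>\beta_0$ the form $\cEn_\beta:=\cEn+\beta\langle\cdot,\cdot\rangle$ satisfies $(\beta-\beta_0)\|u\|^2\le\cEn_\beta(u,u)\le(\|L^{(n)}\|+\beta)\|u\|^2$, so the $\cEn_\beta$–norm is equivalent to the $L^2(\nZd)$–norm; closedness (completeness of $(L^2(\nZd),\cEn_\beta)$) is then automatic, and the weak sector condition follows from $|\cEn_\beta(u,v)|\le(\|L^{(n)}\|+\beta)\|u\|\,\|v\|\le\frac{\|L^{(n)}\|+\beta}{\beta-\beta_0}\,\cEn_\beta(u,u)^{1/2}\cEn_\beta(v,v)^{1/2}$.

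The heart of the matter is the Markovian property, i.e.\ that the unit contraction operates on $\cEn$ in the sense of \cite{Osh13}; I would read it off from the structure of $L^{(n)}$. The bounded operator $L^{(n)}$ has nonnegative off–diagonal entries $(x,y)\mapsto 2n^{\alpha}\Cn(x,y)$ and vanishing row sums, $L^{(n)}\mathbf{1}=0$; writing $L^{(n)}=R-\lambda\,\Id$ with $\lambda:=2n^{\alpha}\sup_x\Cn(x)\le 4n^{\alpha}M<\infty$ and $R:=L^{(n)}+\lambda\,\Id$ having only nonnegative entries, the semigroup $T^{(n)}_t:=e^{tL^{(n)}}=e^{-\lambda t}e^{tR}$ is positivity–preserving, and $L^{(n)}\mathbf{1}=0$ gives $T^{(n)}_t\mathbf{1}=\mathbf{1}$. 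Hence $0\le f\le 1$ implies $0\le T^{(n)}_t f\le T^{(n)}_t\mathbf{1}=1$, so $T^{(n)}_t$ is sub–Markovian, which is precisely the Markovian property of the coercive closed form $\cEn$. (Alternatively one verifies the unit contraction directly on the form, where the symmetric part $\cEns$ is classical and the contribution of $\cEna$ is controlled by $|\Cna|\le\Cns$. Since $(L^{(n)})^{*}$ has the same structure, with $\sup_y\sum_x\Cn(x,y)\le 2M<\infty$, the dual semigroup is sub–Markovian too, so $\cEn$ is in fact a Dirichlet form, a fortiori a lower–bounded semi-Dirichlet form.) I expect this to be the only step requiring genuine care.

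Finally, regularity: $C_c(\nZd)$ (finitely supported functions) is dense in $L^2(\nZd)=D(\cEn)$, every function on the discrete set $\nZd$ is continuous, and by the norm equivalence above $\cEn_\beta$–convergence coincides with $L^2(\nZd)$–convergence, so $C_c(\nZd)$ is a core of $\cEn$ consisting of continuous, compactly supported functions; hence $\cEn$ is regular, completing (i). The generator $(G,\Dom(G))$ of $(\cEn,L^2(\nZd))$ is characterized by $u\in\Dom(G)$, $Gu=g$ iff $\cEn(u,v)=\langle -g,v\rangle$ for all $v\in L^2(\nZd)$; since $\cEn(u,v)=\langle -L^{(n)}u,v\rangle$ for all $u,v\in L^2(\nZd)$ with $L^{(n)}u\in L^2(\nZd)$, this forces $\Dom(G)=L^2(\nZd)$ and $G=L^{(n)}$, which together with the boundedness of $L^{(n)}$ established in the first paragraph settles (ii).
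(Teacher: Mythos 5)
Your proposal is correct, but it takes a genuinely more self-contained and elementary route than the paper. The paper's proof is essentially a reference proof: it observes that \eqref{eq:ass0s} implies \eqref{eq:ass0} and also implies the ($n$-dependent) sector-type condition
\begin{align*}
\sup_{x \in \nZd} \sum_{y \in \nZd} \frac{\vert \Cna(x,y)\vert^2}{\Cns(x,y)} < \infty
\end{align*}
(since $\vert\Cna\vert\le\Cns$), and then cites \cite{ScWa15} for G\r{a}rding's inequality and the sector condition, and \cite{MSS18} for finiteness, the Markov property, and the identification of the generator. You instead derive everything from the single observation that \eqref{eq:ass0s} yields two-sided row/column bounds on $\Cn$, hence (Schur test) boundedness of $L^{(n)}$ on $L^2(\nZd)$; finiteness of $\cEn$, the sector condition, closedness, and the generator identification then all follow trivially, the G\r{a}rding bound drops out of the clean identity $\cEna(u,u)=2n^{\alpha-d}\sum_x u^2(x)\sum_y\Cna(x,y)$, and the Markov property is read off from positivity and conservativeness of the semigroup $e^{tL^{(n)}}=e^{-\lambda t}e^{tR}$. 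What your route buys is that it is self-contained and exploits the special feature that $L^{(n)}$ is bounded on all of $L^2(\nZd)$, which is true here but not in the continuum; what the paper's route buys is that it passes through exactly the condition \eqref{eq:prelK1} that anticipates the $n$-uniform assumption \eqref{K1} and the improved estimates of \autoref{lemma:elementary}, making the logical structure consistent with what comes later. One small point to tighten: $\mathbf{1}\notin L^2(\nZd)$, so $T^{(n)}_t\mathbf{1}=\mathbf{1}$ should be justified by extending the exponential series to $\ell^{\infty}(\nZd)$ (legitimate since the Schur bounds also give boundedness of $L^{(n)}$ on $\ell^{\infty}$), or by checking the unit-contraction property directly on the form as in \cite{MSS18}.
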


\begin{proof}
First of all, note that \eqref{eq:ass0s} implies \eqref{eq:ass0}.
To see that $\cEn(u,v) < \infty$ for every $u,v \in L^2(\nZd)$, we refer the interested reader to \cite{MSS18}. $(\cEn,L^2(\nZd))$ is a regular lower-bounded semi-Dirichlet form since \eqref{eq:ass0s} implies that for every $n \in \N$:
\begin{align}
\label{eq:prelK1}
\sup_{x \in \nZd} \sum_{y \in \nZd} \frac{\vert \Cna(x,y)\vert^2}{\Cns(x,y)} < \infty.
\end{align}
In \cite{ScWa15}, it was proved that $(\cEn,L^2(\nZd))$ satisfies a G\r{a}rding's inequality and the sector condition under \eqref{eq:prelK1}. However, note that at this point the constants might still depend on $n$ (see \autoref{lemma:elementary} for an improved result using \eqref{K1}). For a proof of (ii), we refer to \cite{MSS18}.
\end{proof}

As a regular lower-bounded semi-Dirichlet form, $(\cEn,L^2(\nZd))$ is associated with the so called variable speed random walk $\Xn$, a continuous time Markov chain that jumps from a point $x \in \nZd$ to $y \in \nZd$ with probability $\frac{\Cn(x,y)}{\Cn(x)}$ and waits at $x$ for an exponentially distributed waiting time with parameter $n^{\alpha-d}\Cn(x)$. Note that $\Xn$ is in general non-reversible due to the lack of symmetry of $\Cn$.

\begin{remark}
Note that also $(\cEns, L^2(\nZd))$ is a regular symmetric Dirichlet form and is particular nonnegative definite, i.e., $\cEns(u,u) \ge 0$ for every $u \in L^2(\nZd)$.\\
$(\cEns, L^2(\nZd))$ is a associated with a symmetric Hunt process $Y^{(n)}$. One can construct $Y^{(n)}$ as the reversible continuous time Markov chain jumping from $x \in \nZd$ to $y \in \nZd$ with probability $\Cns(x,y)\left(\sum_{y \in \nZd} \Cns(x,y)\right)^{-1}$ and waits at $x$ for an exponentially distributed waiting time with parameter $n^{\alpha-d}\left(\sum_{y \in \nZd} \Cns(x,y)\right)$.
\end{remark}

Moreover, the following L\'evy system formula holds true.

\begin{lemma}[L\'evy system formula]
\label{lemma:LSformula}
Assume \eqref{eq:ass0s}. Let $f : [0,\infty) \times \nZd \times \nZd$ be nonnegative, measurable, and vanishing on the diagonal. Then, for any $x \in \nZd$ and predictable stopping time $\tau$:
\begin{align*}
\EE^x \left[ \sum_{t \le \tau} f(t,\Xn_{t_-},\Xn_t) \right] = \EE^x \left[ \int_0^{\tau} n^{\alpha} \sum_{y \in \nZd} f(t,\Xn_t,y) \Cn(\Xn_t , y) \d t \right].
\end{align*}
\end{lemma}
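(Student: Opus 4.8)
The plan is to stay entirely on the level of the Markov chain $\Xn$ itself: the symmetric reference walk $Y^{(n)}$ and its reversible Lévy system are of no help here because of the nonsymmetry, so instead one invokes the standard compensation formula for the jump measure of a conservative continuous-time Markov chain with bounded jump rates.

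First I would recall the jump structure of $\Xn$ read off from $L^{(n)}$: the jump intensity from $x$ to $y$ equals $n^{\alpha}\Cn(x,y)$, and \eqref{eq:ass0s}, which implies \eqref{eq:ass0} and hence $\sup_{x}\Cn(x)<\infty$, makes the total rate bounded, so that $\Xn$ is conservative. Let
\begin{align*}
N(\d s,\d y):=\sum_{s:\,\Xn_{s}\neq\Xn_{s-}}\delta_{(s,\Xn_{s})}(\d s,\d y)
\end{align*}
denote the jump measure of $\Xn$ on $(0,\infty)\times\nZd$. The crucial input is the classical compensator identity for Markov chains: with respect to the minimal augmented filtration of $\Xn$, the predictable compensator of $N$ is $\widehat N(\d s,\{y\})=n^{\alpha}\Cn(\Xn_{s-},y)\,\d s$, that is,
\begin{align*}
\EE^{x}\Big[\int F\,\d N\Big]=\EE^{x}\Big[\int F\,\d\widehat N\Big]
\end{align*}
for every nonnegative predictable $F:(0,\infty)\times\Omega\times\nZd\to[0,\infty]$. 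For a self-contained argument one applies Dynkin's formula to the bounded functions $\ind_{\{z\}}\in\mathcal{D}(L^{(n)})=L^{2}(\nZd)$ (legitimate by \autoref{lemma:markov-welldef}(ii)) to obtain the identity for $F$ of product form $\ind_{(a,b]}(s)\,\ind_{\{z\}}(\Xn_{s-})\,\ind_{\{w\}}(y)$, and then extends by a monotone class argument together with monotone convergence; boundedness of the rates is precisely what makes $\widehat N$ finite and $N-\widehat N$ a true (not merely local) martingale.

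Granting the compensator identity, the lemma follows quickly. By linearity and a monotone class argument it suffices to treat $f(t,x,y)=g(t)\,\ind_{\{a\}}(x)\,\ind_{\{b\}}(y)$ with $a\neq b$ in $\nZd$ and $g\ge0$ bounded measurable. Apply the compensation formula to $F(s,\omega,y):=f\big(s,\Xn_{s-}(\omega),y\big)\,\ind_{\{s\le\tau(\omega)\}}$, which is predictable since $s\mapsto\Xn_{s-}$ is left-continuous and adapted and $\{(s,\omega):s\le\tau(\omega)\}$ is a predictable set ($\tau$ being a predictable stopping time). Because $f$ vanishes on the diagonal, $\int F\,\d N=\sum_{s\le\tau}f(s,\Xn_{s-},\Xn_{s})$, as only genuine jump times contribute; and since $\Xn_{s-}=\Xn_{s}$ for Lebesgue-a.e.\ $s$,
\begin{align*}
\int F\,\d\widehat N=\int_{0}^{\tau}n^{\alpha}\sum_{y\in\nZd}f(s,\Xn_{s},y)\,\Cn(\Xn_{s},y)\,\d s .
\end{align*}
Taking $\EE^{x}$ of these two identities proves the formula for such $f$, and passing to arbitrary nonnegative measurable $f$ vanishing on the diagonal by monotone convergence concludes.

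The only genuine obstacle is the absence of symmetry, which rules out transporting a Lévy system from $\cEns$, so the compensator identity must be established for $\Xn$ directly. Since $\Xn$ is nonetheless an ordinary conservative Markov chain with bounded rates this is routine, the two points requiring a little care being that $N-\widehat N$ is a true martingale — ensured by the boundedness of the rates coming from \eqref{eq:ass0s} — and the measurability bookkeeping in the monotone class arguments, in particular the predictability of $F$, which is where predictability of $\tau$ enters.
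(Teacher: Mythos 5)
Your argument is correct and reaches the same compensator identity that the paper relies on, but by a more self-contained route. The paper's proof simply cites Oshima \cite{Osh13} for the fact that $\Xn$ possesses a L\'evy system $N(x,y)=n^{\alpha+d}\Cn(x,y)$, $H_t=t$, and then refers to Lemma~4.7 of \cite{ChKu03} and Lemma~4.1 of \cite{HuKa07} for the standard monotone-class and martingale steps; you instead derive the predictable compensator of the jump measure directly from the chain's bounded jump rates via the compensation formula for pure-jump Markov processes. These are two phrasings of the same key fact, so your version buys independence from the semi-Dirichlet-form L\'evy-system machinery, while the paper's buys brevity by outsourcing it. One small misattribution in your opening: the reason the paper does not transport a L\'evy system from $Y^{(n)}$ is not that L\'evy systems fail without symmetry --- they are available for the nonsymmetric $\Xn$ itself, precisely because $(\cEn,L^2(\nZd))$ is a regular lower-bounded semi-Dirichlet form and $\Xn$ its associated Hunt process, which is what \cite{Osh13} supplies. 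Also a technical caveat on one sentence: Dynkin's formula applied to $\ind_{\{w\}}$ alone identifies the compensator of the net jump count at $w$, not of the jumps \emph{from} a specified $z$ \emph{to} $w$; to isolate the latter you need either the explicit jump-chain/holding-time construction of $\Xn$ or an argument on the product space $\nZd\times\nZd$. This is indeed routine once the rates are bounded (which \eqref{eq:ass0s} guarantees), but the gap between ``Dynkin for $\ind_{\{w\}}$'' and ``compensator of $N$'' deserves a word.
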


\begin{proof}
According to \cite{Osh13}, one can take $N(x,y) = n^{\alpha+d}\Cn(x,y)$, $H_t = t$ as a L\'evy system for $\Xn$. From here, the proof follows along the lines of Lemma 4.7 in \cite{ChKu03}, respectively Lemma 4.1 in \cite{HuKa07}.
\end{proof}

We have seen above that the condition \eqref{eq:ass0s} suffices for the family of conductances $(\Cn)$ to induce a family of regular lower bounded semi-Dirichlet forms, which allows us to define associated Markov chains $\Xn$ on $\nZd$ for every $n \in \N$. However, in order to prove the desired convergence results (see \autoref{thm:CLTl} and \autoref{thm:CLTnl}), it is crucial to impose assumptions that give some control over the behavior of $(\Xn)$ uniformly in $n$. Therefore, in the sequel we will work with the assumptions, which will be introduced in the following section.\\
Let us remark already at this point that \eqref{CTail} implies the existence of a uniform bound in \eqref{eq:ass0s}. Together with \eqref{K1} and \eqref{Sob}, we are able to prove a G\r{a}rding's inequality and a sector condition for $(\cEn,L^2(\nZd))$ with constants that are uniform in $n$ (see \autoref{lemma:elementary}).

\subsection{Main Assumptions}
\label{sec:assumptions}

In the following we list the main assumptions on the conductances that we assume to be in place throughout the remainder of this article. Those are sufficient conditions under which solutions to $\partial_t u - L^{(n)} u = 0$ respectively $-L^{(n)} u = 0$ are locally H\"older continuous and nonnegative supersolutions satisfy a weak Harnack inequality with a constant independent of $n$ (see \autoref{sec:sol}). We point out that all assumptions are formulated in such a way that all the appearing constants are independent of $n$.\\
Similar assumptions, as well as their motivation and a discussion can be found in \cite{KaWe22} in the context of integro-differential operators in $\R^d$ governed by nonsymmetric integral kernels $K : \R^d \times \R^d \to [0,\infty]$.

We define $\Bn_r(x_0) = \lbrace x \in \nZd : \vert x - x_0\vert < r \rbrace \subset \nZd$. In contrast to Euclidean space, $\mu^{(n)}(\Bn_r) \asymp r^d$ does not hold for every $r > 0$ since the upper bound fails as $r \searrow 0$. However, for every $\sigma > 0$ there exist $c_1,c_2 > 0$ such that for every $n \in \N$ and $r > \frac{\sigma}{2n}$: 
\begin{align}
\label{eq:volreg}
c_1 r^d \le \mu^{(n)}(\Bn_r) \le c_2 r^d.
\end{align} 
As such volume regularity property is crucial for the derivation of H\"older estimates, we restrict ourselves to working on balls with large enough radii. This fact is mirrored also in the statements of the assumptions below.

Let $\alpha \in (0,2]$, $\sigma > 0$ and $\theta \in (\frac{d}{\alpha},\infty]$ be fixed. We list the following assumptions on a family of conductances $(\Cn)_n$:

The first two assumptions control the behavior of the antisymmetric part of $\Cn$.

\begin{assumption*}[\textbf{K1}]
\label{ass:K1}
There exist $A > 0$ and a symmetric conductance $J^{(n)} : \nZd \times \nZd \to [0,\infty]$ satisfying \eqref{CTail} such that for every $n \in \N$, $x_0 \in \nZd$, $r > \frac{\sigma}{2n}$ and $v \in L^2(\Bn_{2r}(x_0))$:
\begin{align}
\label{K1}\tag{$\text{K1}$}
\left\Vert n^{\alpha}\sum_{y \in \nZd} \frac{\vert\Cna(\cdot,y)\vert^2}{J^{(n)}(\cdot,y)} \right\Vert_{L^{\theta}(\nZd)} \le A, \quad \cE^{(n),J^{(n)}}_{\Bn_{2r}(x_0)}(v,v) \le A \cEns_{\Bn_{2r}(x_0)}(v,v).
\end{align}
\end{assumption*}

\begin{assumption*}[\textbf{K2}]
\label{ass:K2}
There exist $C > 0$, $D < 1$ and a symmetric conductance $j^{(n)} : \nZd \times \nZd \to [0,\infty]$ such that for every $n \in \N$, $x_0 \in \nZd$, $\frac{\sigma}{2n} < r \le 1$, and every $x,y \in \Bn_2(x_0)$ and every $v \in L^2(\Bn_{2r}(x_0))$:
\begin{align}
\label{K2}\tag{$\text{K2}$}
\Cn(x,y) \ge (1-D) j^{(n)}(x,y), ~~ \cEns_{\Bn_{2r}(x_0)}(v,v) \le C \cE^{(n),j^{(n)}}_{\Bn_{2r}(x_0)}(v,v).
\end{align}
\end{assumption*}

\begin{remark}
\begin{itemize}
\item[(i)] \eqref{K1} is crucial for the validity of the sector condition with a uniform constant (see \autoref{lemma:elementary}). It implies that the antisymmetric part is of lower order. 
\item[(ii)] The range $\theta \in (\frac{d}{\alpha},\infty]$ is natural. It causes the antisymmetric part $\cEna$ to have subcritical scaling. It allows us to approximate operators possessing drifts within this range of integrability.
\item[(iii)] \eqref{K2} can be regarded as an ellipticity assumption on $\Cn$. It ensures that the conductance $\Cns - |\Cna|$ is locally coercive with respect to $\cEns$.
\item[(iv)] In the simplest case, \eqref{K1} and \eqref{K2} hold true with $J^{(n)} = j^{(n)} = \Cns$. Allowing for general kernels $J^{(n)}$, $j^{(n)}$ makes it possible to work with conductances $\Cn$ that are not supported in certain cones of directions (see \cite{KaWe22}).
\end{itemize}
\end{remark}

The following assumption is reminiscent of \eqref{eq:ass0s}, but gives us uniform control in $n$.

\begin{assumption*}[\textbf{C-Tail}]
\label{ass:CTail}
There exist $c,\delta > 0$ such that for every $n \in \N$ it holds
\begin{align}
\label{CTail}\tag{$\text{C-Tail}0$}
&\sup_{x \in \nZd} n^{\alpha}\sum_{y \in \nZd \setminus \Bn_r(x)} \Cns(x,y) \le c r^{-\alpha}, ~~ \forall 0 < r \le 1,\\
\label{CTail2}\tag{$\text{C-Tail}\infty$}
&\sup_{x \in \nZd} n^{\alpha}\sum_{y \in \nZd \setminus \Bn_r(x)} \Cn(x,y) \le c r^{-\delta}, ~~ \forall 1 < r < \infty.
\end{align}
\end{assumption*}

Let us make several remarks on the assumptions introduced above.

\begin{lemma}
Assume that \eqref{CTail} is satisfied with $\alpha \in (0,2]$. Then the following hold true:
\begin{itemize}
\item[(i)] There exists $c > 0$ such that for every $n \in \N$ and $0 < r \le 1$:
\begin{align}
\label{eq:BKUA3}
\sup_{x \in \nZd} n^{\alpha} \sum_{y \in \Bn_r(x)} \vert x-y \vert^{2} \Cns(x,y) \le c r^{2-\alpha}.
\end{align}
\item[(ii)] Let $0 < \rho \le r \le 1$ and $x_0 \in \nZd$. Every function $\tau : \nZd \to [0,1]$ with $\supp(\tau) \subset \Bn_{r+\rho}(x_0)$, $\tau \equiv 1$ in $\Bn_{r}(x_0)$ and $\max_{i = 1,...,d} \Vert\nabla^{(n)}_i \tau\Vert_{L^{\infty}(\nZd)} \le 2\rho^{-1}$ satisfies:
\begin{align}
\label{eq:cutoff}
\sup_{x \in \nZd} n^{\alpha} \sum_{y \in \nZd} (\tau(x)-\tau(y))^2 \Cns(x,y) \le c \rho^{-\alpha},
\end{align}
where $c > 0$ is independent of $\rho,r,x_0,n$, and we write $\nabla^{(n)}_i \tau(x) := n(\tau(x+e_i/n)-\tau(x))$.
\end{itemize}
\end{lemma}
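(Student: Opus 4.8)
The plan is to deduce both estimates from the single tail bound \eqref{CTail} by an elementary layer-cake (Cavalieri) computation, the lattice structure of $\nZd$ entering only to handle scales below $n^{-1}$.

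For (i), fix $x\in\nZd$ and $0<r\le1$. If $r\le n^{-1}$ then $\Bn_r(x)=\{x\}$ and, since $\Cns(x,x)=0$, the left-hand side vanishes; so assume $r>n^{-1}$. Writing $|x-y|^2=\int_0^{\infty}\ind_{\{|x-y|^2>t\}}\,\d t$ and exchanging sum and integral,
\[
n^{\alpha}\suml_{y\in\Bn_r(x)}|x-y|^2\,\Cns(x,y)\ \le\ \il_0^{r^2}\Big(n^{\alpha}\suml_{y\in\nZd\setminus\Bn_{\sqrt t}(x)}\Cns(x,y)\Big)\,\d t .
\]
The next step is to show $n^{\alpha}\sum_{y\notin\Bn_{\sqrt t}(x)}\Cns(x,y)\le c\,t^{-\alpha/2}$ for all $0<t\le r^2$: for $t\ge n^{-2}$ this is exactly \eqref{CTail} with radius $\sqrt t\le1$, while for $t<n^{-2}$ the ball $\Bn_{\sqrt t}(x)$ reduces to $\{x\}$, so the sum equals $n^{\alpha}\Cns(x)$, which is $\le c\,n^{\alpha}\le c\,t^{-\alpha/2}$ (bounding $\Cns(x)$ by \eqref{CTail} with radius $(2n)^{-1}$ and using $t^{-\alpha/2}\ge n^{\alpha}$). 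Integrating $c\,t^{-\alpha/2}$ over $(0,r^2)$ and using $\alpha<2$ gives $\tfrac{2c}{2-\alpha}\,r^{2-\alpha}$, which is (i). (One may instead decompose $\Bn_r(x)\setminus\{x\}$ into the dyadic annuli $\Bn_{2^{-k}r}(x)\setminus\Bn_{2^{-k-1}r}(x)$, bound $|x-y|^2$ by $4^{-k}r^2$ there, apply \eqref{CTail} with radius $2^{-k-1}r$ to each, and sum the geometric series $\sum_{k\ge0}2^{k(\alpha-2)}$.)

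For (ii), the key preliminary observation is that $\max_i\|\nabla^{(n)}_i\tau\|_{L^{\infty}(\nZd)}\le2\rho^{-1}$ makes $\tau$ discretely Lipschitz: a single coordinate step changes $\tau$ by at most $2(\rho n)^{-1}$, so telescoping along a lattice path from $x$ to $y$ gives $|\tau(x)-\tau(y)|\le\tfrac2\rho|x-y|_{\ell^1}\le\tfrac{2\sqrt d}{\rho}|x-y|$; combined with $0\le\tau\le1$ this yields the pointwise bound $(\tau(x)-\tau(y))^2\le\min\{1,\tfrac{4d}{\rho^2}|x-y|^2\}$. Splitting $\sum_y$ at the sphere of radius $\rho$, the contribution of $\{\,|x-y|\ge\rho\,\}$ is at most $n^{\alpha}\sum_{y\notin\Bn_{\rho}(x)}\Cns(x,y)\le c\,\rho^{-\alpha}$ by \eqref{CTail}, and that of $\{\,|x-y|<\rho\,\}$ is at most $\tfrac{4d}{\rho^2}\,n^{\alpha}\sum_{y\in\Bn_{\rho}(x)}|x-y|^2\,\Cns(x,y)\le\tfrac{4d}{\rho^2}\cdot c\,\rho^{2-\alpha}=4dc\,\rho^{-\alpha}$ by part (i) with $r$ replaced by $\rho\le1$. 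Adding the two gives (ii); note that $r$, $x_0$ and the support and normalization properties of $\tau$ play no role.

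The only genuinely delicate point is the convergence of the $t$-integral (equivalently of the dyadic series) in (i), which is exactly where $\alpha<2$ enters: a kernel such as $\Cns(x,y)\asymp(1+n|x-y|)^{-d-2}$ satisfies \eqref{CTail} with $\alpha=2$ yet makes the left-hand side of (i) of order $\log(nr)$. For $\alpha=2$ one therefore relies instead on the bounded-range structure in force there, under which $\Bn_r(x)$ may be replaced by a ball of radius $O(n^{-1})$ and both bounds follow at once from $\sup_x\sum_y\Cns(x,y)\le c$.
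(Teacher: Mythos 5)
Your proof of (i) for $\alpha<2$ and the deduction of (ii) from (i) are both correct, and the layer-cake computation you carry out is equivalent (Fubini vs.\ summing a geometric series) to the dyadic annular decomposition $\Bn_r(x)=\bigcup_{k\ge 0}\Bn_{2^{-k}r}(x)\setminus\Bn_{2^{-k-1}r}(x)$ that the paper only sketches; you also correctly handle the sub-lattice scales $t<n^{-2}$ where the annuli become empty, which the paper glosses over.

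The most substantive part of your proposal is the closing observation, and it is correct: the lemma \emph{as stated}, with only the hypothesis that \eqref{CTail} holds for some $\alpha\in(0,2]$, fails at $\alpha=2$. Your example $\Cns(x,y)\asymp(1+n|x-y|)^{-d-2}$ satisfies \eqref{CTail} with $\alpha=2$ (the tail over $\{|z|\ge nr\}$ of $|z|^{-d-2}$ on $\Z^d$ is $\asymp (nr)^{-2}$, so $n^2\sum_{y\notin\Bn_r(x)}\Cns(x,y)\asymp r^{-2}$), yet the left-hand side of \eqref{eq:BKUA3} is $\asymp\sum_{0<|z|<nr}|z|^{-d}\asymp\log(nr)$, which is unbounded in $n$ while the right-hand side is $r^{0}=1$. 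Equivalently, the dyadic series $\sum_{k=0}^{\lfloor\log_2(nr)\rfloor}2^{k(\alpha-2)}$ is bounded uniformly in $n$ and $r$ only when $\alpha<2$. The paper does not actually need the borderline case of the lemma in this generality: whenever $\alpha=2$ is in force (\autoref{thm:CLTl}, \autoref{thm:approxl}), it is additionally assumed that $\Cn$ has bounded range $C/n$, in which case $n^2\sum_{y\in\Bn_r(x)}|x-y|^2\Cns(x,y)\le C^2\sup_x\sum_y\Cns(x,y)$ and both \eqref{eq:BKUA3} and \eqref{eq:cutoff} are immediate. So the lemma should either be restricted to $\alpha\in(0,2)$, or the bounded-range hypothesis should be added when $\alpha=2$; you have identified this accurately, and it does not affect any of the downstream applications.
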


The proof of this result goes via decomposing $\Bn_r(x) = \bigcup_{k=0}^{\infty} \Bn_{2^{-k}r}(x) \setminus \Bn_{2^{-k-1}r}(x)$ and is well-known in the literature.\\ Note that \eqref{eq:cutoff} is a discrete version of assumption $(\text{Cutoff})$ from \cite{KaWe22}.

\begin{remark}[\eqref{CTail}, \eqref{CTail2} for bounded range]
Let $\alpha = 2$ and assume that there exists $C > 0$ such that $\Cn(x,y) = 0$, whenever $\vert x-y \vert > \frac{C}{n}$.

\begin{itemize}
\item[(i)] In this special case, assumption \eqref{CTail2} simplifies significantly since
\begin{align}
\label{eq:bdrangeCtail}
\sup_{x \in \nZd} n^{\alpha} \sum_{y \in \nZd \setminus \Bn_r(x)} \Cn(x,y) = 0,
\end{align}
for every $r > \frac{C}{n}$. As for every $r > 1$, it holds that $r > \frac{C}{n}$ already if $n > C$, we infer that \eqref{CTail2} follows if there exists $c > 0$: 
\begin{align*}
\Cn(x) := \sum_{y \in \nZd} \Cn(x,y) \le c, ~~ \forall x \in \nZd.
\end{align*}

\item[(ii)] \eqref{CTail} follows already if one assumes
\begin{align}
\label{eq:suffCTailbdrange}
\sup_{n\in\N}\sup_{x \in \nZd}\sum_{y \in \nZd} \Cns(x,y) < \infty,
\end{align}
which is due to \eqref{eq:bdrangeCtail} and the fact that for $r \le C/n$:
\begin{align*}
n^{2}\sum_{y \in \nZd \setminus \Bn_r(x)} \Cns(x,y) \le c r^{-2} \sum_{y \in \nZd \setminus \Bn_r(x)} \Cns(x,y) \le c_1 r^{-2}.
\end{align*}
\item[(iii)] Assumption \eqref{eq:suffCTailbdrange} is natural for symmetric Markov chains  (see  \cite{DeKu13}, \cite{BaKu08}, \cite{BKU10}). The uniformity in $n$ usually follows from scaling. Namely, given a symmetric conductance $C_s : \Z^d \times \Z^d \to [0,\infty]$ with $\sup_{x \in \Z^d}\sum_{y \in \Z^d} C_s(x,y) < \infty$, one defines $\Cns(x,y) := C_s(nx,ny)$ for $x,y \in \nZd$, and hence \eqref{eq:suffCTailbdrange} is immediate.
\item[(iv)] In particular, \eqref{eq:suffCTailbdrange} is sufficient for both, \eqref{CTail} and \eqref{CTail2}.
\end{itemize}

\end{remark}

Finally, we require a suitable coercivity assumption. We express coercivity in terms of a Poincar\'e - and a Sobolev inequality. For an investigation of the validity of such inequalities for Markov chains, we refer the interested readers to the monographs \cite{Kum14}, \cite{Bar17}.

\begin{assumption*}[\textbf{Poinc}]
\label{ass:Poinc}
There exists $c > 0$ such that for every $n \in \N$, every ball $\Bn_r \subset \nZd$ with $\frac{\sigma}{2n} < r \le 1$ and every $v \in L^2(\Bn_r)$:
\begin{align}
\label{Poinc}\tag{$\text{Poinc}$}
n^{-d}\sum_{x \in \Bn_r} (v(x) - [v]_{\Bn_r})^2 \le c r^{\alpha} \cEns_{\Bn_r}(v,v),
\end{align}
where $[v]_{\Bn_r} = \mu^{(n)}(\Bn_r)^{-1} n^{-d}\sum_{y \in \Bn_r} v(y)$.
\end{assumption*}

\begin{assumption*}[\textbf{Sob}]
There exists $c > 0$ such that for every $n \in \N$, and every $v \in L^2(\nZd)$:
\begin{align}
\label{Sob}\tag{$\text{Sob}$}
\Vert v^2 \Vert_{L^{\frac{d}{d-\alpha}}(\nZd)} \le c\cEns(v,v).
\end{align}
\end{assumption*}

\begin{remark}
\begin{itemize}
\item[(i)] One can deduce from \eqref{Sob} and \eqref{eq:cutoff} a local Sobolev inequality of the following form:\\
There exists $c > 0$ such that for every $n \in \N$, every $x_0 \in \nZd$ and $\frac{\sigma}{2n} < r \le 1$, $0 < \rho \le r$ and every $v \in L^2(\Bn_{2r}(x_0))$:
\begin{align}
\label{Sob-loc}
\Vert v^2 \Vert_{L^{\frac{d}{d-\alpha}}(\Bn_r(x_0))} \le c\cEns_{\Bn_{r+\rho}(x_0)}(v,v) + c \rho^{-\alpha}\Vert v^2 \Vert_{L^1(\Bn_{r+\rho}(x_0))}.
\end{align}
\item[(ii)] Typically, if $\alpha \in (0,2)$, the Markov chains $(\Xn)$ converge to pure jump processes in $\R^d$. Therefore, the information on jumps of $(\Xn)$ to neighboring points in $\nZd$ do not survive in the limit $n \nearrow \infty$. Consequently, it is natural to impose only minimal assumptions on short connections. Allowing for $\sigma > 1$ in \eqref{Poinc} generalizes the class of admissible long-range conductances in the sense that it allows for conductances $(\Cn)_n$ that satisfy $\Cn(x,y) \equiv 0$ whenever $\vert x-y \vert \le \frac{\sigma}{n}$. Obviously, \eqref{Poinc}, \eqref{Sob-loc} fail for $\frac{1}{n} < r < \frac{\sigma}{2n}$ since $\cEn_{\Bn_r}(v,v) = 0$ for every $v \in L^2(\Bn_r)$.
\item[(iii)] \eqref{Poinc}, \eqref{Sob-loc} are trivially satisfied whenever $r,r+\rho < \frac{1}{n}$, regardless of $\Cn$.
\end{itemize}
\end{remark}

Clearly, assumption \eqref{CTail} implies \eqref{eq:ass0s}. Therefore, \autoref{lemma:markov-welldef} yields that $(\cEn, L^2(\nZd))$ is a regular lower bounded semi-Dirichlet form. We prove that under \eqref{K1} and \eqref{Sob}, we have that the constants in G\r{a}rding's inequality and the sector condition are independent of $n$.

\begin{lemma}
\label{lemma:elementary}
Assume that \eqref{K1}, \eqref{CTail} hold true for some $\alpha \in (0,2]$ and $\theta \in (\frac{d}{\alpha},\infty]$. Moreover, assume \eqref{Sob} if $\theta < \infty$.
Then, there are $c_1,c_2 > 0$ such that the following estimates hold true for every $u,v \in L^2(\nZd)$ and $n \in \N$:
\begin{align}
\label{eq:Garding-markov}
\cEn(u,u) &\ge \frac{1}{2} \cEns(u,u) -c_1 \Vert u \Vert^2_{L^2(\nZd)},\\
\label{eq:sector-markov}
\cEn(u,v)^2 &\le c_2 \cEns(u,u)\left(\cEns(v,v) + \Vert v \Vert^2_{L^2(\nZd)}\right).
\end{align}
\end{lemma}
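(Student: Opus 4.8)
The plan is to exploit the decomposition $\cEn = \cEns + \cEna$ and bound the antisymmetric part $\cEna$ by the symmetric part $\cEns$ plus a controlled $L^2$-remainder, uniformly in $n$. For the Gårding inequality \eqref{eq:Garding-markov}, I would start from
\[
\cEn(u,u) = \cEns(u,u) + \cEna(u,u) = \cEns(u,u) + n^{\alpha-d}\sum_{x,y}(u(x)-u(y))(u(x)+u(y))\Cna(x,y),
\]
and estimate $|\cEna(u,u)|$. Writing $(u(x)-u(y))(u(x)+u(y)) = u(x)^2 - u(y)^2$ does not immediately help because $\Cna$ is antisymmetric; instead I would use the pointwise bound $|(u(x)-u(y))(u(x)+u(y))| \le |u(x)-u(y)|\,(|u(x)|+|u(y)|)$ together with the Cauchy–Schwarz trick
\[
|u(x)-u(y)|\,|\Cna(x,y)| \le \varepsilon\,|u(x)-u(y)|^2 J^{(n)}(x,y) + \tfrac{1}{4\varepsilon}\,\frac{|\Cna(x,y)|^2}{J^{(n)}(x,y)},
\]
for a small parameter $\varepsilon>0$ to be chosen, where $J^{(n)}$ is the symmetric conductance from \eqref{K1}. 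Summing against $(|u(x)|+|u(y)|)$ and using symmetry of $J^{(n)}$, the first term produces (after another Cauchy–Schwarz in $x,y$, or directly a weighted Young inequality) a contribution $\le C\varepsilon\,\cE^{(n),J^{(n)}}(u,u) \le C A\varepsilon\,\cEns(u,u)$ by the second inequality in \eqref{K1}; the second term must be summed against $(|u(x)|+|u(y)|)^2$ and estimated by the weight $\big\|n^{\alpha}\sum_y \tfrac{|\Cna(\cdot,y)|^2}{J^{(n)}(\cdot,y)}\big\|_{L^\theta} \le A$. If $\theta = \infty$ this directly gives $\le \tfrac{C}{\varepsilon}A\|u\|_{L^2}^2$; if $\theta < \infty$ one needs Hölder's inequality in the $x$-variable to split off the $L^\theta$-norm of the weight against $\|u^2\|_{L^{\theta'}}$, and then interpolate $\|u^2\|_{L^{\theta'}}$ between $\|u^2\|_{L^1} = \|u\|_{L^2}^2$ and $\|u^2\|_{L^{d/(d-\alpha)}} \le c\,\cEns(u,u)$ (this is where \eqref{Sob} enters, and the condition $\theta > d/\alpha$ is exactly what makes $\theta' < d/(d-\alpha)$ so that the interpolation is admissible); a further Young inequality absorbs the $\cEns$-part. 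Choosing $\varepsilon$ small enough that all $\cEns$-contributions sum to at most $\tfrac12\cEns(u,u)$ yields \eqref{eq:Garding-markov}.

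For the sector condition \eqref{eq:sector-markov} the argument is parallel but now bilinear. I would split $\cEn(u,v) = \cEns(u,v) + \cEna(u,v)$. The symmetric part obeys $|\cEns(u,v)| \le \cEns(u,u)^{1/2}\cEns(v,v)^{1/2}$ by Cauchy–Schwarz for the nonnegative symmetric form $\cEns$, which is bounded by the right-hand side. For the antisymmetric part, using $|\cEna(u,v)| \le n^{\alpha-d}\sum_{x,y}|u(x)-u(y)|\,(|v(x)|+|v(y)|)\,|\Cna(x,y)|$, I apply Cauchy–Schwarz in the $(x,y)$-sum against the measure $J^{(n)}(x,y)\,n^{\alpha-d}$:
\[
|\cEna(u,v)|^2 \le \Big(n^{\alpha-d}\sum_{x,y}|u(x)-u(y)|^2 J^{(n)}(x,y)\Big)\Big(n^{\alpha-d}\sum_{x,y}(|v(x)|+|v(y)|)^2\,\frac{|\Cna(x,y)|^2}{J^{(n)}(x,y)}\Big).
\]
The first factor is $\cE^{(n),J^{(n)}}(u,u) \le A\,\cEns(u,u)$ by \eqref{K1}. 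The second factor, after expanding $(|v(x)|+|v(y)|)^2 \le 2v(x)^2 + 2v(y)^2$ and using symmetry of $J^{(n)}$ and $|\Cna|$, is $\le 4\sum_x v(x)^2\big(n^{\alpha}\sum_y \tfrac{|\Cna(x,y)|^2}{J^{(n)}(x,y)}\big)n^{-d}$, which is handled exactly as the corresponding term in the Gårding estimate: $\le C A\|v\|_{L^2}^2$ if $\theta=\infty$, and $\le C A(\cEns(v,v) + \|v\|_{L^2}^2)$ if $\theta < \infty$ via Hölder, \eqref{Sob} and interpolation. Combining, $|\cEna(u,v)|^2 \le C\,\cEns(u,u)(\cEns(v,v) + \|v\|_{L^2}^2)$, and adding the symmetric contribution gives \eqref{eq:sector-markov}.

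The main obstacle is the case $\theta < \infty$: one must be careful that after applying Hölder in $x$ against the $L^\theta$-weight, the remaining exponent $\theta' = \theta/(\theta-1)$ satisfies $\theta' \in (1, \tfrac{d}{d-\alpha})$ so that $\|v^2\|_{L^{\theta'}}$ genuinely lies between $\|v^2\|_{L^1}$ and the Sobolev-controlled norm $\|v^2\|_{L^{d/(d-\alpha)}}$; this is precisely guaranteed by the hypothesis $\theta > d/\alpha$. A secondary point requiring care is bookkeeping of the summation over $(x,y)$ versus the "pointwise in $x$" $L^\theta$- and $L^p$-norms — one should symmetrize the $J^{(n)}$-weighted sums so that all terms reduce to sums of the form $\sum_x v(x)^2 w^{(n)}(x)\,n^{-d}$ with $w^{(n)}(x) = n^\alpha\sum_y |\Cna(x,y)|^2/J^{(n)}(x,y)$, at which point \eqref{K1} applies directly. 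Everything else is routine weighted Young and Cauchy–Schwarz inequalities with constants depending only on $A$, the Sobolev constant, $d$, $\alpha$ and $\theta$, hence independent of $n$.
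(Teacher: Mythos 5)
Your proposal is correct and follows essentially the same route as the paper's proof: Cauchy--Schwarz of the antisymmetric part against the auxiliary kernel $J^{(n)}$ from \eqref{K1} reduces everything to the weighted sum $n^{-d}\sum_{x}v^2(x)W(x)$ with $W(x)=n^{\alpha}\sum_{y}|\Cna(x,y)|^2/J^{(n)}(x,y)$, which is then controlled by $\Vert W\Vert_{L^\theta(\nZd)}$, H\"older, \eqref{Sob} and Young when $\theta<\infty$. The only technical variation is that the paper handles the $\theta<\infty$ step by truncating $W$ at a suitably chosen level $M$ rather than by $L^1$--$L^{d/(d-\alpha)}$ interpolation of $\Vert v^2\Vert_{L^{\theta'}}$; the two are interchangeable and both hinge on $\theta'<d/(d-\alpha)$, i.e.\ $\theta>d/\alpha$. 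One small bookkeeping point: in your G\aa rding paragraph the weighted Young inequality should be applied to the full product $|u(x)-u(y)|\,(|u(x)|+|u(y)|)\,|\Cna(x,y)|$ --- pairing $|u(x)-u(y)|\,J^{(n)}(x,y)^{1/2}$ against $(|u(x)|+|u(y)|)\,|\Cna(x,y)|/J^{(n)}(x,y)^{1/2}$, exactly as your Cauchy--Schwarz does for the sector estimate --- not to $|u(x)-u(y)|\,|\Cna(x,y)|$ alone followed by summation against $(|u(x)|+|u(y)|)$, which leaves an unabsorbed factor of $|u(x)|+|u(y)|$ in the $\cE^{(n),J^{(n)}}$ contribution; equivalently, the paper simply takes $u=v$ in the bilinear estimate for $\cEna$ and chooses $\delta$ small.
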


\begin{proof}
First, as an easy consequence of H\"older's inequality, we obtain
\begin{align}
\label{eq:symsector}
\cEns(u,v)^2 \le \cEns(u,u)\cEns(u,u).
\end{align}
To treat the antisymmetric part, let us first denote $W(x) = n^{\alpha}\sum_{y \in \nZd} \frac{|\Cna(x,y)|^2}{J^{(n)}(x,y)}$ and prove the following auxiliary estimate (see Lemma 2.4 in \cite{KaWe22}) for $v \in L^2(\nZd)$ and $\delta > 0$:
\begin{align}
\label{eq:quantifiedK1consequence}
n^{-d} \sum_{x \in \nZd} v^2(x) W(x) \le \delta \cEns(v,v) + c_0 \delta^{\frac{d}{d-\theta\alpha}} \Vert W \Vert_{L^{\theta}(\nZd)}^{\frac{\theta \alpha}{\theta\alpha-d}} \Vert v\Vert_{L^2(\nZd)}^2,
\end{align}
where $c_0 > 0$ is some constant and we write $\frac{d}{d-\infty \alpha} = 0$, $ \frac{\infty \alpha}{\infty \alpha -d} = 1$. Note that in case $\theta = \infty$, \eqref{eq:quantifiedK1consequence} is a direct consequence of H\"older's inequality. If $\theta < \infty$, we decompose $W(x) = W_1(x) + W_2(x)$, where $W_1(x) = W(x) \mathbbm{1}_{\{|W(x)| > M\}}$ for some $M > 0$. We compute 
\begin{align*}
\Vert W_1 \Vert_{L^{\frac{d}{\alpha}}(\nZd)} &\le 2 \Vert W \Vert_{L^{\theta}(\nZd)} |\{ W \ge M \}|^{\frac{\alpha}{d} - \frac{1}{\theta}} \le 2 \Vert W \Vert_{L^{\theta}(\nZd)} \left( \frac{\Vert W \Vert_{L^{\theta}(\nZd)}}{M} \right)^{\theta \left( \frac{\alpha}{d} - \frac{1}{\theta} \right)}\\
&= 2 \Vert W \Vert_{L^{\theta}(\nZd)}^{\frac{\theta \alpha}{d}} M^{1 - \frac{\theta \alpha}{d}}.
\end{align*}
Now, let us choose $M = \left( \frac{\delta}{2} \right)^{\frac{d}{d-\theta\alpha}} \Vert W \Vert_{L^{\theta}(\nZd)}^{\frac{\theta \alpha}{\theta\alpha -d}}$. Then, $\Vert W_1 \Vert_{L^{\frac{d}{\alpha}}(\nZd)} < \frac{\delta}{c}$ and therefore
\begin{align*}
n^{-d} \sum_{x \in \nZd} v^2(x) W(x) \le \frac{\delta}{c} \Vert v^2 \Vert_{L^{\frac{d}{d-\alpha}}(\nZd)} + c_0 \delta^{\frac{d}{d-\theta\alpha}} \Vert W \Vert_{L^{\theta}(\nZd)}^{\frac{\theta \alpha}{\theta\alpha-d}} \Vert v\Vert_{L^2(\nZd)}^2,
\end{align*}
which yields \eqref{eq:quantifiedK1consequence} after application of \eqref{Sob}.\\
Having \eqref{eq:quantifiedK1consequence} at hand, we estimate
\begin{align}
\label{eq:nonsymsector}
\begin{split}
|\cEna(u,v)|^2 &\le \cE^{(n),J^{(n)}}(u,u) \left( n^{-d} \sum_{x \in \nZd} v^2(x) W(x) \right)\\
&\le c_1 \cEns(u,u) \left(\delta \cEns(v,v) + c_0 \delta^{\frac{d}{d-\theta\alpha}} \Vert W \Vert_{L^{\theta}(\nZd)}^{\frac{\theta \alpha}{\theta\alpha-d}} \Vert v\Vert_{L^2(\nZd)}^2 \right).
\end{split}
\end{align}
By combination of \eqref{eq:symsector} and \eqref{eq:nonsymsector}, we immediately obtain \eqref{eq:sector-markov}. To prove \eqref{eq:Garding-markov}, let us choose $\delta > 0$ so small that $c_1\delta < \frac{1}{2}$. Then, by application of \eqref{eq:nonsymsector} with $u=v$, we get
\begin{align*}
\cEn(u,u) \ge \cEns(u,u) - |\cEna(u,u)| \ge \frac{1}{2} \cEns(u,u) - c_2 \Vert u \Vert_{L^2(\nZd)}^2
\end{align*}
for some $c_2 > 0$, as desired.
\end{proof}

\begin{remark}
Note that in case $\theta = \frac{d}{\alpha}$, it is in general not possible to get $c_1, c_2 > 0$ independent of $n$ in \eqref{eq:Garding-markov}, \eqref{eq:sector-markov}. However, it is possible to prove \autoref{lemma:elementary} for $\theta = \frac{d}{\alpha}$ if $A$ is small enough.
\end{remark}

\subsection{Probability and PDEs}

Although our main results (see \autoref{thm:CLTl}, \autoref{thm:CLTnl}, \autoref{thm:approxl} and \autoref{thm:approxnl}) are of probabilistic nature, our analysis is based on the study of solutions $u$ to the heat equation, as well as the corresponding stationary equation
\begin{align*}
\partial_t u - L^{(n)} u = f, \qquad -L^{(n)} u = f
\end{align*}
associated with $L^{(n)}$. This section is meant to set up the weak solution
concept in the discrete setting and to introduce the heat semigroups and resolvents  associated with $(\cEn , L^2(\nZd))$ in order to prepare the proofs of our main results.

Given a connected set $\Bn \subset \nZd$, we introduce the function space 
\begin{align*}
L^2_c(\Bn) = \lbrace f \in L^2(\nZd) : \supp(f) \subset \Bn \rbrace,
\end{align*}
which will serve as a test function space for our solution concept. Solutions will all be contained in the following space
\begin{align*}
V(\Bn | \nZd) = \{ f : \nZd \to \R :& ~f\mid_{\Bn} \in L^2(\Bn),\\
& ~(f(x)-f(y))\vert\Cns(x,y)\vert^{1/2} \in L^2(\Bn \times \nZd) \}.
\end{align*}
Note that in particular $L^2(\nZd) \subset V(\Bn|\nZd)$ due to \autoref{lemma:markov-welldef}.

\begin{definition}
\label{def:weaksol}
Let $f \in L^{\infty}(\nZd)$, $\Bn \subset \nZd$ be connected and $I \subset \R$ be an interval.
\begin{itemize}
\item[(i)]  Let $\lambda \in \R$. We say that $u \in V(\Bn|\nZd)$ is a supersolution to $-L^{(n)} u + \lambda u = f$ in $\Bn$ if 
\begin{align}
\label{eq:superharm}
\cEn(u,\phi) + \lambda<u,\phi> \le <f,\phi> \text{ for all } \phi \in L^2_c(\Bn)\text{ with } \phi \le 0.
\end{align} 
$u$ is called a subsolution if \eqref{eq:superharm} holds true for every $\phi \ge 0$. $u$ is called solution, if it is a supersolution and a subsolution to $-L^{(n)} = f$.

\item[(ii)] We say that $u \in L^1_{loc}(I;L^2(\Bn))$ is a supersolution to $\partial_t u - L^{(n)} u = f$ in $I \times \Bn$ if the weak $L^2(\Bn)$-derivative $\partial_t u$ exists, $\partial_t u \in L^1_{loc}(I;L^2(\Bn))$ and 
\begin{align}
\label{eq:supercal}
n^{-d}\sum_{x \in \Bn} \partial_t u(t,x)\phi(x) + \cEn(u(t),\phi) \le <f,\phi>, ~~ \forall t \in I,~ \forall \phi \in L^2_c(\Bn) \text{ with } \phi \le 0.
\end{align}
$u$ is called a subsolution if \eqref{eq:superharm} holds true for every $\phi \ge 0$. $u$ is called solution, if it is a supersolution and a subsolution.
\end{itemize}
\end{definition}

In this article, we will mostly be concerned with solutions that are derived from the semigroup and resolvent corresponding to $\cEn$. For symmetric Dirichlet forms the interplay between the bilinear form, the associated semigroup and its Hunt process is a powerful tool and well-established in the literature. Although most connections remain valid in the nonsymmetric case, other properties fail in our situation. We provide a list of the features we will rely on in the sequel. All results are standard and can be found in \cite{FuUe12}, \cite{Osh13}, \cite{DeKu13}, or \cite{MSS18}.

\subsubsection{Semigroups and heat kernels}

The heat kernel for $\Xn$, defined by 
\begin{align*}
p^{(n)}_t(x,y) = n^d \PP^x(\Xn_t = y)
\end{align*}
induces the transition semigroup $(P^{(n)}_t)_{t > 0}$ given as
\begin{align*}
P^{(n)}_t f(x) = n^{-d} \sum_{y \in \nZd} p^{(n)}_t(x,y) f(y) = \EE^x(f(\Xn_t)), ~~ f \in L^2(\nZd).
\end{align*}
$(P^{(n)}_t)$ coincides with the strongly continuous semigroup that is associated to $\cEn$ via the theory of lower bounded semi-Dirichlet forms and is strongly continuous and Markovian (see Chapter 3 in \cite{Osh13}). Therefore we denote both objects by $(P^{(n)}_t)$.\\ Moreover, it holds that for every $f \in L^2(\nZd)$, $(t,x) \mapsto P^{(n)}_tf(x)$ is a solution to $\partial_t u - L^{(n)} u = 0$ in $(0,\infty) \times \nZd$ satisfying $ \Vert P^{(n)}_tf - f \Vert_{L^2(\nZd)} \to 0$, as $t \searrow 0$.

Moreover, for any set $\Bn \subset \nZd$ we introduce the restricted form $(\cEn,L_c^2(\Bn))$. Then $(\cEn,L_c^2(\Bn))$ is a regular lower-bounded semi-Dirichlet form on $L^2(\Bn)$ with heat semigroup $(P_t^{\Bn})$ on $L^2(\Bn)$ defined by
\begin{align*}
P_t^{\Bn} f(x) = \EE^x(\mathbbm{1}_{\lbrace t \le \tau_{\Bn} \rbrace} f(\Xn_t)), ~~ f \in L^2(\Bn),
\end{align*}
where $\tau_{\Bn} = \inf \lbrace t > 0 : \Xn_t \not\in \Bn\rbrace$ is the first exit time of $\Bn$.\\
By the definition of $(\cEn,L_c^2(\Bn))$ it becomes apparent that $((t,x) \mapsto P^{\Bn}_t f(x)) \in L^2_c(\Bn)$ is a solution to $\partial_t u  - L^{(n)} u = 0$ in $(0,\infty) \times \Bn$ with initial data $f \in L^2(\Bn)$. \\
The process associated with the restricted form is the killed process $X^{\Bn}$ given by 
\begin{align*}
X^{\Bn}_t = \begin{cases}
\Xn_t,& 0 \le t < \tau_{\Bn},\\
\partial,& t \ge \tau_{\Bn},
\end{cases}
\end{align*}
where $\partial$ denotes the cemetery state. We refer to Chapter 3.5 \cite{Osh13} for the exact construction of the restricted form, the killed process and their correspondence.

\subsubsection{Resolvent operators}

From G\r{a}rding's inequality \eqref{eq:Garding-markov}, we know that $\cEn(u,u) + \lambda(u,u) \ge 0$ if $\lambda \ge c_1 =: \lambda_0$. For any $\lambda > \lambda_0$ one can define the resolvent operator
\begin{align}
\label{eq:defresolvent}
U^{(n)}_{\lambda} f(x) = \int_{0}^{\infty} e^{-\lambda t} P_t^{(n)} f(x) \d t, ~~ f \in L^2(\nZd).
\end{align}
It holds 
\begin{align}
\label{eq:resolventsol}
\cEn(U^{(n)}_{\lambda} f,g) + \lambda(U^{(n)}_{\lambda} f,g) = (f,g), \quad \forall f,g \in L^2(\nZd),
\end{align}
which is why $x \mapsto U^{(n)}_{\lambda} f(x)$ solves $-L^{(n)} u + \lambda u = f$ in $\nZd$. $U^{(n)}_{\lambda} f$ is the unique element in $L^2(\nZd)$ with this property (see Theorem 1.1.2 in \cite{Osh13}). Moreover, by continuity of $U_{\lambda}^{(n)}$ and \eqref{eq:resolventsol} there exists $c(\lambda,\lambda_0) > 0$ such that
\begin{align}
\label{eq:resolventestimate}
\cEn(U_{\lambda}^{(n)}f, U_{\lambda}^{(n)}f) + \Vert U_{\lambda}^{(n)}f \Vert_{L^2(\nZd)}^2 \le c(\lambda,\lambda_0) \Vert f \Vert_{L^2(\nZd)}^2.
\end{align}

\section{Regularity properties of solutions}
\label{sec:sol}

In order to establish convergence of the Markov chains $(X^{(n)})$, we require several qualitative properties of solutions to the parabolic equation $\partial_t u - L^{(n)} u = 0$, respectively the elliptic equation $- L^{(n)}u = 0$. Weak Harnack inequalities and interior H\"older estimates can be deduced via the methods applied in \cite{KaWe22}. Moreover, we prove a weak maximum principle for subsolutions to the parabolic equation.

\subsection{Regularity and weak Harnack inequality}

In this section we establish a weak Harnack inequality for supersolutions to $\partial_t u - L^{(n)} u = 0$ and $-L^{(n)} u = 0$, as well as interior H\"older estimates for solutions to these equations. It is a crucial feature of these results that the constants do not depend on $n$, the solution $u$, or the diameter of the solution domain, but only on the family of conductances $(C_n)$ itself, through the constants in the underlying assumptions \eqref{K1}, \eqref{K2}, \eqref{Poinc}, \eqref{Sob}, \eqref{CTail}, \eqref{CTail2}. This uniformity in $n$ renders possible convergence of the laws of the corresponding Markov chains in our main results. While for symmetric Markov chains on $\nZd$ such phenomenon occurs naturally by appropriate scaling of a given chain on $\Z^d$, we have to explicitly prescribe the correct behavior for $n \nearrow \infty$ due to the lack of symmetry in our setup (see \eqref{K1}).

We now state all regularity results that will be needed in the subsequent chapters. It is important to point out that these results were already established by the author in \cite{KaWe22} for integro-differential operators on $\R^d$ governed by nonsymmetric jumping kernels under similar assumptions. The proofs of the corresponding results in Euclidean space do not differ from the discrete setting. Therefore, we only present a sketch of the proofs in this article and refer to \cite{KaWe22} for a detailed discussion.

\begin{theorem}[weak parabolic Harnack inequality]
\label{thm:PHI}
Assume that \eqref{K1}, \eqref{K2}, \eqref{Poinc}, \eqref{Sob}, \eqref{CTail} hold true for some $\alpha \in (0,2]$, $\sigma > 0$ and $\theta \in (\frac{d}{\alpha},\infty]$. Then there exists $c > 0$ such that for every $n \in \N$, $\frac{\sigma}{n} < R \le 1$, $x_0 \in \nZd$, and every nonnegative supersolution $u$ to $\partial_t u -L^{(n)}u = 0$ in $(t_0 - R^{\alpha}, t_0 + R^{\alpha}) \times \Bn_{2R}(x_0)$:
\begin{align}
\label{eq:PHI}
\dashint_{(t_0 - R^{\alpha}, t_0 - R^{\alpha} + (\frac{R}{2})^{\alpha})} \left(\left(\frac{nR}{2} \right)^{-d}\sum_{x \in \Bn_{\frac{R}{2}}(x_0)} u(t,x)\right) \d t \le c \inf_{(t_0+R^{\alpha} - (\frac{R}{2})^{\alpha}, t_0 + R^{\alpha}) \times \Bn_{\frac{R}{2}}(x_0)} u.
\end{align}
\end{theorem}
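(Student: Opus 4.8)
The strategy is to follow the classical De Giorgi–Nash–Moser program for the weak parabolic Harnack inequality, exactly as carried out in \cite{KaWe22} for integro-differential operators on $\Rd$, and to check that the discrete setting introduces no new difficulties beyond the mild restriction $R > \sigma/n$ (which guarantees the volume regularity \eqref{eq:volreg}). Since \autoref{lemma:elementary} already provides G\r{a}rding's inequality and the sector condition with constants uniform in $n$, and the assumptions \eqref{K1}, \eqref{K2}, \eqref{Poinc}, \eqref{Sob}, \eqref{CTail} are the discrete analogues of the hypotheses in \cite{KaWe22}, the proof is essentially a transcription. I would proceed in three main stages.

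\emph{Stage 1: local energy (Caccioppoli-type) estimates.} For a nonnegative supersolution $u$, test the weak formulation \eqref{eq:supercal} with $\phi = -\tau^2 u^{q-1}$ (for suitable powers $q$, and with the sign conventions of \autoref{def:weaksol}) where $\tau$ is a cutoff as in \eqref{eq:cutoff}. The key point is that the nonsymmetric bilinear form splits as $\cEn = \cEns + \cEna$, the symmetric part $\cEns$ produces the good energy term $\cEns_{\Bn_{r+\rho}}(\tau u^{q/2}, \tau u^{q/2})$ up to lower-order contributions controlled by \eqref{CTail} via \eqref{eq:cutoff}, \eqref{eq:BKUA3}, and the antisymmetric part $\cEna$ is absorbed using \eqref{K1}: by the auxiliary estimate \eqref{eq:quantifiedK1consequence} the drift term is bounded by $\delta\cEns + C_\delta\|\cdot\|_{L^2}^2$, so for $\delta$ small it is swallowed by the symmetric energy. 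The tail contributions from long jumps are handled by \eqref{CTail} and \eqref{CTail2} exactly as the $\mathrm{Tail}$ terms in \cite{KaWe22}.

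\emph{Stage 2: Moser iteration for positive and negative powers.} Combining the Caccioppoli estimate of Stage 1 with the local Sobolev inequality \eqref{Sob-loc} (in its parabolic form, after also using an $L^\infty_t L^2_x$–$L^2_t(L^{2d/(d-\alpha)}_x)$ interpolation and the time-derivative term) yields a reverse-H\"older/self-improvement inequality. Iterating over a sequence of shrinking space-time cylinders gives, on the one hand, a bound $\sup u \lesssim \|u\|_{L^p}$ over a smaller cylinder for every small $p>0$ (for the "sup side" on the future cylinder), and on the other hand a bound $\|u^{-1}\|_{L^\infty} \lesssim \|u^{-1}\|_{L^p}$. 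The crucial nonsymmetric subtlety — that the dual semigroup need not be Markovian — does not obstruct Stage 2, since this is purely a PDE argument on weak (super)solutions, not a probabilistic one; one only needs that $-u$-type and $u^{-1}$-type functions remain sub/supersolutions, which follows from the convexity arguments standard in \cite{KaWe22}.

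\emph{Stage 3: the logarithmic estimate and Bombieri–Giusti.} The remaining gap is to connect the small positive power of $u$ on the past cylinder with the small negative power on the future cylinder. This is done via the classical log-lemma: testing \eqref{eq:supercal} with $\phi = -\tau^2 u^{-1}$ shows that $\log u$ has bounded oscillation in a suitable $\mathrm{BMO}$-type sense in the space variable, uniformly in time, using \eqref{Poinc} to convert the energy of $\log u$ into an oscillation bound; combined with the time-monotonicity coming from the supersolution property this produces a parabolic $\mathrm{BMO}$ bound on $\log u$. Feeding this together with the two Moser iterations into the abstract Bombieri–Giusti lemma yields \eqref{eq:PHI} with a constant depending only on $d,\alpha,\sigma,\theta$ and the constants in the assumptions. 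I expect \emph{Stage 3}, and more precisely the careful bookkeeping of the lower-order and tail terms through the iteration so that no constant secretly depends on $n$ or on $R$, to be the main point requiring care; but since $R\le 1$ and all assumptions are scale-calibrated to the exponent $\alpha$, the scaling closes, and the argument is identical in structure to \cite{KaWe22}, so I would only sketch Stages 1–3 and refer there for the full computations.
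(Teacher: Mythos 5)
Your proposal matches the paper's own proof in both strategy and substance: the paper likewise reduces the statement to the continuous-space result of \cite{KaWe22} by rewriting $\cEn$ in the kernel notation $\widetilde{C}^{(n)}(x,y)=n^{\alpha+d}\Cn(x,y)$, checking that \eqref{K1}, \eqref{K2}, \eqref{Poinc}, \eqref{Sob}, \eqref{CTail} translate verbatim to the discrete analogues of the hypotheses there (with constants uniform in $n$), and noting that the restriction $R>\sigma/n$ is exactly what guarantees the volume regularity \eqref{eq:volreg} needed in the iteration. The Caccioppoli/Moser/log-estimate/Bombieri--Giusti scaffolding you sketch is precisely what is being imported wholesale from \cite{KaWe22}, so the two arguments coincide.
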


\begin{remark}
In particular, under the assumption of \autoref{thm:PHI}, there exists $c > 0$ such that for every $n \in \N$, $\frac{\sigma}{n} < R \le 1$, $x_0 \in \nZd$, and every nonnegative supersolution $u$ to $-L^{(n)}u = 0$ in $\Bn_{2R}(x_0)$, it holds
\begin{align}
\label{eq:EHI}
\left(\frac{nR}{2} \right)^{-d}\sum_{x \in \Bn_{\frac{R}{2}}(x_0)} u(t,x)\le c \inf_{\Bn_{\frac{R}{2}}(x_0)} u.
\end{align}
\end{remark}

\begin{theorem}[parabolic H\"older estimates]
\label{thm:PHR}
Assume that \eqref{K1}, \eqref{K2}, \eqref{Poinc}, \eqref{Sob}, \eqref{CTail},  \eqref{CTail2} hold true for some $\alpha \in (0,2]$, $\sigma > 0$ and $\theta \in (\frac{d}{\alpha},\infty]$. Then there exist $c > 0$ and $\gamma \in (0,1)$ such that for every $n \in \N$, $\frac{\sigma}{n} < R \le 1$, $x_0 \in \nZd$ and every solution $u$ to $\partial_t u -L^{(n)}u = 0$ in $(t_0 - R^{\alpha}, t_0 + R^{\alpha}) \times \Bn_{2R}(x_0)$, it holds
\begin{align}
\label{eq:PHR}
\vert u(t,x) - u(s,y)\vert \le c\Vert u\Vert_{L^{\infty}([t - R^{\alpha}, t + R^{\alpha}] \times \nZd)} \left(\frac{\vert t-s\vert^{1/\alpha} + \vert x-y \vert}{R}\right)^{\gamma}
\end{align}
for a.e. $(t,x),(s,y) \in (t_0-R^{\alpha},t_0+R^{\alpha}) \times \Bn_R(x_0)$ with $x \neq y$. Moreover:
\begin{align}
\label{eq:PHR2}
\vert u(t,x) - u(s,x)\vert \le c\Vert u\Vert_{L^{\infty}([t - R^{\alpha}, t + R^{\alpha}] \times \nZd)} \left(\frac{\vert t-s\vert^{1/\alpha} \vee \frac{\sigma}{n}}{R}\right)^{\gamma}
\end{align}
for a.e. $t,s \in (t_0-R^{\alpha},t_0+R^{\alpha})$, $x \in \Bn_R(x_0)$.
\end{theorem}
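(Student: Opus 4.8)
The plan is to derive, from the weak parabolic Harnack inequality \autoref{thm:PHI}, an oscillation decay estimate for solutions on a nested family of parabolic cylinders --- that is, to run the De Giorgi--Nash--Moser scheme --- and then to read off the pointwise moduli \eqref{eq:PHR} and \eqref{eq:PHR2}. By linearity and the fact that $L^{(n)}$ annihilates constants, I may first normalize $u$ so that $\Vert u \Vert_{L^{\infty}}$ on the relevant cylinder is at most $\tfrac12$, so that the initial oscillation is at most $1$. I then fix a contraction factor $\kappa \in (0,\tfrac14]$, to be chosen small at the end, and a sequence of parabolic cylinders $Q_j$ of spatial radius $\rho_j = \kappa^j R$, nested and marching forward in time so that the ``future'' subcylinder of $Q_j$ on which \autoref{thm:PHI} bounds the infimum from below contains $Q_{j+1}$; this is available as long as \autoref{thm:PHI} is applicable at scale $\rho_j$, which requires $\rho_j \gtrsim \sigma/n$, hence for $0 \le j \le J_n$ with $\kappa^{J_n} R \asymp \sigma/n$.

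For the oscillation step I would set $M_j = \sup_{Q_j} u$, $m_j = \inf_{Q_j} u$, $\omega_j = M_j - m_j$. Since $L^{(n)}$ kills constants, both $w := M_j - u$ and $u - m_j$ solve $\partial_t w - L^{(n)} w = 0$ in the interior of $Q_j$ and are nonnegative on $Q_j$, and at least one of them --- say $w$ --- has $\dashint_{Q_j^-} w \ge \tfrac12 \omega_j$ over the bottom subcylinder $Q_j^-$. Applying \autoref{thm:PHI} to $w$, in the form carrying a tail term $\mathcal{T}_j$ that accounts for the possible negativity of $w$ outside $Q_j$ (established exactly as in \cite{KaWe22} from \eqref{CTail} and \eqref{CTail2}), gives $\tfrac12 \omega_j \le c\big( \inf_{Q_j^+} w + \mathcal{T}_j \big) = c\,(M_j - M_{j+1}) + c\,\mathcal{T}_j$, and hence $\omega_{j+1} \le (1 - \tfrac1{2c})\,\omega_j + c\,\mathcal{T}_j$ (with $m$ in place of $M$ in the other case). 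The tail I would estimate by splitting $w_- = (u - M_j)_+$ over the dyadic annuli $\Bn_{\rho_{i-1}}(x_0) \setminus \Bn_{\rho_i}(x_0)$, $i \le j$: on such an annulus $u \le M_{i-1}$, so $w_- \le M_{i-1} - M_j \le \omega_{i-1}$, and \eqref{CTail}, \eqref{CTail2} yield $\mathcal{T}_j \lesssim \sum_{i \le j} (\rho_j/\rho_i)^{\alpha}\, \omega_{i-1} + (\rho_j/R)^{\alpha}\, \Vert u \Vert_{L^{\infty}}$. Feeding in the inductive ansatz $\omega_j \le K \mu^j$ with $K \ge \Vert u\Vert_{L^{\infty}}$ and $\mu \in (0,1)$ turns the right-hand side into a geometric series in $\kappa^{\alpha}\mu^{-1}$, and for $\kappa$ small enough and a suitable $\mu < 1$ the recursion closes; a standard iteration lemma then produces $c_0 > 0$ and $\gamma \in (0,1)$, independent of $n$, with $\osc_{Q_\rho} u \le c_0 (\rho/R)^{\gamma}\, \Vert u \Vert_{L^{\infty}}$ for every parabolic cylinder $Q_\rho$ concentric with $Q_R$ of spatial radius $\rho \in (\sigma/n, R]$.

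Given this oscillation decay, \eqref{eq:PHR} follows by covering: for $(t,x),(s,y)$ as in the statement with $x \ne y$, the parabolic scale $\rho \asymp |t-s|^{1/\alpha} + |x-y| \ge \tfrac1n$ places both points in one cylinder of the above kind, so $|u(t,x) - u(s,y)| \le \osc_{Q_\rho} u \lesssim (\rho/R)^{\gamma}\Vert u\Vert_{L^{\infty}}$. For the purely temporal bound \eqref{eq:PHR2} one cannot insert $x = y$ into \eqref{eq:PHR}; instead I would pick a lattice point $y$ with $|x-y| \asymp \max(|t-s|^{1/\alpha}, \sigma/n)$ (possible, since this quantity is at least $\tfrac1n$) and combine \eqref{eq:PHR} for $(t,x),(s,y)$ with the spatial estimate for $(s,x),(s,y)$, which gives the claimed bound with $|t-s|^{1/\alpha} \vee \tfrac{\sigma}{n}$ in the numerator; the $\tfrac{\sigma}{n}$ is forced because below that scale the volume regularity \eqref{eq:volreg}, and with it the whole iteration, is unavailable on $\nZd$.

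I expect the \emph{main obstacle} to be the last part of the oscillation step: controlling the nonlocal tail $\mathcal{T}_j$ uniformly in $n$ via \eqref{CTail}, \eqref{CTail2}, and --- simultaneously for $\alpha = 2$ (bounded range, where $\mathcal{T}_j \equiv 0$) and for $\alpha \in (0,2)$ --- choosing one contraction factor $\kappa$ so that the geometric series generated by the tail is absorbed into the contraction $1 - \tfrac1{2c}$. The nonsymmetry of $\Cn$ adds nothing new at this stage, having already been absorbed into \autoref{thm:PHI} through \eqref{K1} and \eqref{K2}; I would refer to \cite{KaWe22} for the tail bookkeeping, which carries over to the lattice with the sole change that all admissible scales are restricted to $(\sigma/n, 1]$.
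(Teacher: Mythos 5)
Your proposal follows the same route as the paper: run the oscillation-decay iteration from \cite{KaWe22} using the weak parabolic Harnack inequality (Theorem \ref{thm:PHI}), observe that the iteration terminates at scale $\sigma/n$ because volume regularity fails below it, and convert the resulting oscillation bound into pointwise H\"older moduli. There is, however, a small slip in the covering step for \eqref{eq:PHR}. You assert that $\rho \asymp |t-s|^{1/\alpha}+|x-y| \ge 1/n$ places the two points in a cylinder ``of the above kind,'' but your iteration only produces oscillation decay for $\rho > \sigma/n$; when $\sigma > 1$ there is a range $1/n \le \rho < \sigma/n$ in which no such cylinder exists. The fix is exactly the case split the paper makes: for $\rho < \sigma/n$ apply the oscillation bound at scale $\sigma/n$ to get $|u(t,x)-u(s,y)| \le c\bigl(\tfrac{\sigma}{nR}\bigr)^{\gamma}\Vert u\Vert_{\infty}$, and then use $x \neq y \Rightarrow |x-y| \ge \tfrac1n \ge \tfrac{\sigma}{n}\cdot\tfrac1\sigma$ to rewrite this as $c\,\sigma^{\gamma}\bigl(\tfrac{|x-y|}{R}\bigr)^{\gamma}\Vert u\Vert_{\infty}$, which is \eqref{eq:PHR} with a $\sigma$-dependent constant. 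Your triangle-inequality derivation of \eqref{eq:PHR2} via an auxiliary lattice point is a fine alternative to the paper's direct application of the small-scale bound with $x=y$; both work.
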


\begin{remark}
In particular, under the assumption of \autoref{thm:PHR}, there exist $c > 0$ and $\gamma \in (0,1)$ such that for every $n \in \N$, $\frac{\sigma}{n} < R \le 1$, $x_0 \in \nZd$, and every solution $u$ to $-L^{(n)}u = 0$ in $\Bn_{2R}(x_0)$, it holds
\begin{align}
\label{eq:EHR}
\vert u(x) - u(y)\vert \le c \Vert u\Vert_{L^{\infty}(\nZd)}\left(\frac{\vert x-y \vert}{R}\right)^{\gamma}
\end{align}
for a.e. $x,y \in \Bn_R(x_0)$.
\end{remark}

\begin{proof}[Proof of \autoref{thm:PHI}]
It is straightforward to adapt all proofs in \cite{KaWe22} line by line to the discrete setup at hand.
First, we define $\widetilde{C}^{(n)}(x,y) := n^{\alpha+d}\Cn(x,y)$ and $\int_{\Bn} f(x) \d x := n^{-d}\sum_{x \in \Bn} f(x)$ for $f : \Bn \to \R$, where $\Bn \subset \nZd$. Then, we can rewrite
\begin{align}
\label{eq:bilform}
\cEn(u,v) = \int_{\nZd} \int_{\nZd} (u(x)-u(y))v(x) \widetilde{C}^{(n)}(x,y) \d y \d x.
\end{align}  
This resembles the exact shape of the forms considered in \cite{KaWe22}. 
Upon introducing the notation $\widetilde{J}^{(n)}(x,y) = n^{\alpha+d}J^{(n)}(x,y)$, $\widetilde{j}^{(n)}(x,y) = n^{\alpha+d}j^{(n)}(x,y)$ for the auxiliary jumping kernels $J^{(n)}$ and $j^{(n)}$ from assumptions \eqref{K1} and \eqref{K2}, we get the following estimates from the fact that the assumptions \eqref{K1}, \eqref{K2}, \eqref{Poinc}, \eqref{Sob}, \eqref{CTail}, and \eqref{CTail2} are assumed to hold true:
\begin{align*}
\left\Vert  \int_{\nZd} \frac{\vert \widetilde{C_a}^{(n)}(\cdot,y)\vert^{2}}{\widetilde{J}^{(n)}(\cdot,y)} \d y \right\Vert_{L^{\theta}(\nZd)} &\le A,\\
(1-D) \widetilde{j}^{(n)}(x,y) &\le \widetilde{C}^{(n)}(x,y), ~~ x,y \in \Bn_2,\\
\sup_{x \in \nZd} \int_{\nZd \setminus \Bn_r(x)} \widetilde{C_s}^{(n)}(x,y) &\le c r^{-\alpha}, ~~ 0 < r \le 1,\\
\sup_{x \in \nZd} \int_{\nZd \setminus \Bn_r(x)} \widetilde{C_s}^{(n)}(x,y) &\le c r^{-\delta}, ~~ 1 \le r < \infty,\\
\int_{\Bn_r} (v(x) - [v]_{\Bn_r})^2 \d x &\le c r^{\alpha} \cEns_{\Bn_r}(v,v), ~~ \frac{\sigma}{2n} < r \le 1,\\
\Vert v^2 \Vert_{L^{\frac{d}{d-\alpha}}(\nZd)} &\le c \cEns(v,v).
\end{align*} 
These are the exact analogs of the corresponding conditions (K1), (K2), (Cutoff), (Poinc), (Sob), ($\infty$-Tail) from \cite{KaWe22}. In particular, the constants $A,D,c > 0$ do not depend on $n$. This allows us to follow the proof in \cite{KaWe22} line by line.\\
Let us mention that the restriction to radii $R > \frac{\sigma}{n}$ in \autoref{thm:PHI} is due to the fact that in this regime $\nZd$ satisfies the volume regularity property \eqref{eq:volreg}. This is crucial for the proof of  \autoref{thm:PHI}. Moreover, by carefully tracking the proofs in \cite{KaWe22}, it becomes apparent that in order to show weak Harnack inequalities for solutions on cylinders $(t_0-R^{\alpha},t_0+R^{\alpha}) \times \Bn_{2R}$, for fixed $R > \frac{\sigma}{n}$, it suffices to have \eqref{Poinc} for $r \in (R,2R)$ and \eqref{Sob} for $r \in (R/2,R)$. We refer the interested reader to \cite{Bos20}, from where the exact dependencies can be read off in the symmetric case.
\end{proof}

\begin{proof}[Proof of \autoref{thm:PHR}]
For the proof of H\"older estimates, one finds that Harnack inequalities for solutions on cylinders as above, where $R > \frac{\sigma}{n}$, yield the correct estimate \eqref{eq:PHR} for any $(t,x),(s,y) \in (t_0-R^{\alpha},t_0+R^{\alpha}) \times \Bn_R$ with $\vert t-s \vert^{1/\alpha} + \vert x-y \vert \ge \frac{\sigma}{n}$ (see \cite{KaWe22}). For $(t,x),(s,y) \in (t_0-R^{\alpha},t_0+R^{\alpha}) \times \Bn_R$ with $\vert t-s \vert^{1/\alpha} + \vert x-y \vert \le \frac{\sigma}{n}$, the same proof yields
\begin{align}
\label{eq:HRhelp}
\vert u(t,x) - u(s,y)\vert \le c\Vert u\Vert_{L^{\infty}([t - R^{\alpha}, t + R^{\alpha}] \times \nZd)} \left(\frac{\frac{\sigma}{n}}{R}\right)^{\gamma}.
\end{align}
However, in this case case, there are only two possibilities. Either, we have $x = y$, in which case we directly obtain the desired estimate \eqref{eq:PHR2} from \eqref{eq:HRhelp}. Alternatively, it holds $x \neq y$, which implies
that $|x-y| \ge \frac{1}{n}$. This already the estimate \eqref{eq:PHR}.
\end{proof}

\begin{remark}
It is important to point out that the case $\alpha = 2$ does not differ from the case $\alpha \in (0,2)$. This is due to the fact that all proofs in \cite{KaWe22} are robust in the sense that they work for any bilinear form of type \eqref{eq:bilform} governed by an integral kernel as long as assumptions \eqref{K1}, \eqref{K2}, \eqref{Poinc}, \eqref{Sob}, \eqref{CTail}, \eqref{CTail2} are satisfied for some $\alpha \in (0,2]$. All constants depend on $\alpha$ only through the constants in the assumptions. However, in comparison to the continuous case, assumption \eqref{eq:ass0} guarantees that $\cEn(u,u)$ is well-defined for any $u \in L^2(\nZd)$, also when $\alpha = 2$, and not only for $u \equiv 0$.
\end{remark}

Next, we present a corollary from \autoref{thm:PHR} which yields H\"older estimates for the resolvent $U^{(n)}_{\lambda} f$ and the heat semigroup $P_t^{(n)} f$, for $f \in L^{\infty}(\nZd)$. Let $\lambda_0 > 0$ be as in \autoref{lemma:elementary}. These estimates will become crucial in \autoref{sec:convergence}.

\begin{corollary}
\label{cor:regest}
Assume that \eqref{K1}, \eqref{K2}, \eqref{Poinc}, \eqref{Sob}, \eqref{CTail}, \eqref{CTail2} hold true for some $\alpha \in (0,2]$, $\sigma > 0$ and $\theta \in (\frac{d}{\alpha},\infty]$. Then there exist $c > 0$ and $\gamma \in (0,1)$ such that for every $\lambda > \lambda_0$, $t,s > 0$, $f \in L^{\infty}(\nZd) \cap L^2(\nZd)$ and every $n \in \N$ and $x,y \in \nZd$:
\begin{align}
\label{eq:sgHRE}
\vert P^{(n)}_tf(x) - P^{(n)}_tf(y)\vert &\le c \Vert f \Vert_{L^{\infty}(\nZd)} \vert x-y \vert^{\gamma},\\
\label{eq:sgHRE2}
\vert P^{(n)}_tf(x) - P^{(n)}_sf(x)\vert &\le c \Vert f \Vert_{L^{\infty}(\nZd)} \max\left(\vert t-s\vert^{1/\alpha} , \frac{\sigma}{n}\right)^{\gamma},\\
\label{eq:resHRE}
\vert U^{(n)}_{\lambda} f(x) - U^{(n)}_{\lambda} f(y)\vert &\le c \lambda^{-1}\Vert f \Vert_{L^{\infty}(\nZd)} \vert x-y \vert^{\gamma},
\end{align}
\end{corollary}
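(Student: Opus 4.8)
The three estimates all reduce to the parabolic H\"older estimate \autoref{thm:PHR}, so the plan is to exhibit $P^{(n)}_{\cdot}f$ and $U^{(n)}_{\lambda}f$ as solutions of the relevant equations and then invoke that theorem at an appropriate scale. Two inputs from \autoref{sec:prelim} are needed. First, for $f \in L^2(\nZd)$ the map $(t,x)\mapsto P^{(n)}_t f(x)$ is a solution of $\partial_t u - L^{(n)}u = 0$ in $(0,\infty)\times\nZd$, and by \eqref{eq:resolventsol} the map $x\mapsto U^{(n)}_{\lambda}f(x)$ solves $-L^{(n)}u + \lambda u = f$ in $\nZd$. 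Second, since $(P^{(n)}_t)$ is Markovian, $\Vert P^{(n)}_t f\Vert_{L^{\infty}(\nZd)} \le \Vert f\Vert_{L^{\infty}(\nZd)}$ for every $t > 0$; this controls precisely the global sup-norm appearing on the right-hand side of \eqref{eq:PHR} and \eqref{eq:PHR2}, which is the only place the hypothesis $f \in L^{\infty}(\nZd)$ enters, whereas $f \in L^2(\nZd)$ is used only so that $P^{(n)}_t f$ and $U^{(n)}_{\lambda}f$ are genuine weak solutions. Since the constants $c,\gamma$ furnished by \autoref{thm:PHR} are independent of $n$, so will be all constants produced below.

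For \eqref{eq:sgHRE}, fix $n \in \N$, $t > 0$ and $x \ne y$ in $\nZd$. If $\vert x-y\vert \ge 1$, then $\vert P^{(n)}_t f(x) - P^{(n)}_t f(y)\vert \le 2\Vert f\Vert_{L^{\infty}(\nZd)} \le 2\Vert f\Vert_{L^{\infty}(\nZd)}\vert x-y\vert^{\gamma}$; and if $n \le \sigma$, then $x \ne y$ forces $\vert x-y\vert \ge \tfrac1n \ge \tfrac1\sigma$, so the same trivial bound settles the estimate after absorbing $\sigma^{\gamma}$ into the constant. In the remaining case $n > \sigma$, $\vert x-y\vert < 1$, I would apply \autoref{thm:PHR} to the solution $P^{(n)}_{\cdot}f$ on a cylinder $(t_0 - R^{\alpha}, t_0 + R^{\alpha})\times\Bn_{2R}(x)$ with $R \asymp 1$ and $t_0$ chosen so that this cylinder lies inside $(0,\infty)\times\nZd$ and $(t,x),(t,y) \in (t_0 - R^{\alpha}, t_0 + R^{\alpha})\times\Bn_{R}(x)$; since $\tfrac{\sigma}{n} < R \le 1$ and $\vert x-y\vert < R$, estimate \eqref{eq:PHR} together with the sup-bound above yields $\vert P^{(n)}_t f(x) - P^{(n)}_t f(y)\vert \le c\Vert f\Vert_{L^{\infty}(\nZd)}\vert x-y\vert^{\gamma}$. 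Estimate \eqref{eq:sgHRE2} is obtained in exactly the same way, using \eqref{eq:PHR2} in place of \eqref{eq:PHR} (again reducing to a unit-scale cylinder, and falling back to the trivial bound once $\vert t-s\vert^{1/\alpha} \ge 1$).

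Finally, \eqref{eq:resHRE} follows by integrating \eqref{eq:sgHRE}: by the representation \eqref{eq:defresolvent},
\begin{align*}
\bigl\vert U^{(n)}_{\lambda}f(x) - U^{(n)}_{\lambda}f(y)\bigr\vert \le \int_0^{\infty} e^{-\lambda t}\bigl\vert P^{(n)}_t f(x) - P^{(n)}_t f(y)\bigr\vert\,\d t \le c\Vert f\Vert_{L^{\infty}(\nZd)}\vert x-y\vert^{\gamma}\int_0^{\infty}e^{-\lambda t}\,\d t,
\end{align*}
and $\int_0^{\infty}e^{-\lambda t}\,\d t = \lambda^{-1}$ gives the claim (a time-regularity estimate for $U^{(n)}_{\lambda}f$, if also asserted, follows analogously by integrating \eqref{eq:sgHRE2}); one could alternatively feed $U^{(n)}_{\lambda}f$, a solution of $-L^{(n)}u = f - \lambda u$, into the elliptic estimate \eqref{eq:EHR}, but the integration route avoids dealing with the inhomogeneity. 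I expect no genuine difficulty here: the whole content is the reduction to \autoref{thm:PHR}, and the only points requiring attention are the admissibility restriction $R > \sigma/n$ in that theorem — which is why the lattices with $n \le \sigma$ must be treated separately via the elementary bound $\vert x-y\vert \ge 1/n$ — and the bookkeeping needed to place the parabolic cylinder (together with the history interval entering \eqref{eq:PHR}) inside the time domain $(0,\infty)$ of the solution, which dictates the choice of $t_0$.
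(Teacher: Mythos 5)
Your proof is correct and follows essentially the same route as the paper: express $P^{(n)}_tf$ as a solution of the heat equation, apply \autoref{thm:PHR} at unit scale while controlling the global sup-norm via Markovianity, and then obtain \eqref{eq:resHRE} by integrating \eqref{eq:sgHRE} against $e^{-\lambda t}$. If anything, your treatment is slightly more careful than the paper's: you explicitly dispose of the lattices with $n \le \sigma$ (where the constraint $R > \sigma/n$ in \autoref{thm:PHR} rules out $R\asymp 1$) by noting $\vert x-y\vert \ge 1/n \ge 1/\sigma$ there, whereas the paper only spells out the trivial case $\vert x-y\vert > 1/2$ and leaves the residual gap $1/\sigma \le \vert x-y\vert \le 1/2$ (which can occur when $\sigma > 2$) implicit.
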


\begin{proof}
First, we observe that \eqref{eq:resHRE} is a direct consequence of \eqref{eq:sgHRE} and the observation that
\begin{align*}
\vert U^{(n)}_{\lambda} f(x) - U^{(n)}_{\lambda} f(y)\vert \le \int_0^{\infty} e^{-\lambda t} \vert P^{(n)}_tf(x) - P^{(n)}_tf(y)\vert \d t \le \frac{1}{\lambda} \sup_{t > 0}\vert P^{(n)}_tf(x) - P^{(n)}_tf(y)\vert.
\end{align*} 
For the proof of estimate \eqref{eq:sgHRE} we recall that $(t,x) \mapsto P^{(n)}_tf(x)$ solves $\partial_t u - L^{(n)}u = 0$ in $(0,\infty) \times \nZd$, which is why \autoref{thm:PHR} applied with $R = 1$ on arbitrary time-space cylinders in $(0,\infty) \times \nZd$ yields
\begin{align*}
\vert P^{(n)}_tf(x) - P^{(n)}_tf(y)\vert \le c \Vert P^{(n)}_t \vert f \vert\Vert_{L^{\infty}(\nZd)} \vert x-y \vert^{\gamma} \le c \Vert f \Vert_{L^{\infty}(\nZd)} \vert x-y \vert^{\gamma}
\end{align*}
for any $t > 0$ and $\vert x-y \vert \le \frac{1}{2}$. Note that we also used that $\vert P^{(n)}_t \vert f \vert \vert \le \Vert f \Vert_{L^{\infty}(\nZd)}$ which is due to Markovianity of $(P_t^{(n)})$. In case $\vert x-y \vert > \frac{1}{2}$, estimate \eqref{eq:sgHRE} is trivial since
\begin{align*}
\vert P^{(n)}_tf(x) - P^{(n)}_tf(y)\vert \le 2 \Vert P^{(n)}_t \vert f \vert \Vert_{L^{\infty}(\nZd)} \le 2^{1+\gamma} \Vert f \Vert_{L^{\infty}(\nZd)} \vert x-y \vert^{\gamma}.
\end{align*}
Estimate \eqref{eq:sgHRE2} can be proved from \eqref{eq:PHR2} similar to how \eqref{eq:sgHRE} is proved from \eqref{eq:PHR}.
\end{proof}

The following result is a discrete version of Lemma 3.1 from \cite{KaWe22} and can be proved in the exact same fashion. It is needed in the proof of \autoref{lemma:ETE}. 

\begin{lemma}[$\log(u)$-estimate]
\label{lemma:logu}
Assume that \eqref{K1}, \eqref{K2} and \eqref{CTail} hold true for some $\alpha \in (0,2]$ and $\theta \in (\frac{d}{\alpha},\infty]$. Then there exist $c_1, c_2 > 0$ such that for every $\frac{\sigma}{2n} < r \le 1$, $0 < \rho \le r$ and every nonnegative function $u \in V(\Bn_{2r} | \nZd)$ that satisfies $u > \eps$ in $\Bn_{2r}$ for some $\eps > 0$:
\begin{align*}
c_1 n^{\alpha-d}\hspace{-0.2cm}\sum_{x \in \Bn_{r+\rho}} \sum_{y \in \Bn_{r+\rho}} \hspace{-0.2cm}\tau(x)\tau(y) \left(\log\frac{u(x)}{\tau(x)}-\log\frac{u(y)}{\tau(y)}\right)^2 \hspace{-0.2cm}\Cns(x,y) \le \cEn(u,-\tau^2 u^{-1}) + c_2 \rho^{-\alpha}\mu^{(n)}( \Bn_{r+\rho}),
\end{align*}
where (as in \eqref{eq:cutoff}) $\tau : \nZd \to [0,1]$ satisfies $\supp(\tau) = \Bn_{r+\rho}$, $\tau \equiv 1$ in $\Bn_r$ and\\ $\max_{i = 1,...,d} \Vert\nabla^{(n)}_i \tau\Vert_{L^{\infty}(\nZd)} \le 2\rho^{-1}$.
\end{lemma}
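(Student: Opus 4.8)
The plan is to test the weak formulation of the equation against the function $\phi = -\tau^2 u^{-1}$, where $\tau$ is the cutoff from the statement, and then to bound the resulting bilinear form expression from below by the claimed logarithmic Dirichlet-type quantity. Since $u>\eps>0$ on $\Bn_{2r}$ and $\tau$ is supported in $\Bn_{r+\rho}$, the test function $\phi$ is well-defined, bounded, and lies in $L^2_c(\Bn_{2r})$, so plugging it into $\cEn(u,\cdot)$ is legitimate. First I would split $\cEn = \cEns + \cEna$ and handle the symmetric part; expanding $\cEns(u,-\tau^2 u^{-1})$ as a double sum over $x,y$ against $\Cns(x,y)$, each summand is $-(u(x)-u(y))\big(\tau^2(x)u^{-1}(x)-\tau^2(y)u^{-1}(y)\big)\Cns(x,y)$. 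The key pointwise inequality is the elementary estimate
\begin{align*}
-(a-b)\left(\frac{s^2}{a}-\frac{t^2}{b}\right) \ge \frac{1}{2}st\left(\log\frac{a}{s}-\log\frac{b}{t}\right)^2 - (s-t)^2
\end{align*}
valid for $a,b>0$, $s,t\ge 0$ (this is the discrete analogue of the chain-rule inequality used in \cite{KaWe22}); applying it with $a=u(x)$, $b=u(y)$, $s=\tau(x)$, $t=\tau(y)$ and summing against $\Cns$ produces exactly the desired logarithmic term (up to the constant $c_1$) minus a cutoff error term $n^{\alpha-d}\sum\sum(\tau(x)-\tau(y))^2\Cns(x,y)$, which by \eqref{eq:cutoff} is controlled by $c\rho^{-\alpha}\mu^{(n)}(\Bn_{r+\rho})$ after accounting for the volume of the support.

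Next I would treat the antisymmetric part $\cEna(u,-\tau^2 u^{-1})$, which is the genuinely new contribution compared to the symmetric theory. Writing it out and using $\Cna(x,y)=-\Cna(y,x)$, one symmetrizes to get a sum of the form $n^{\alpha-d}\sum\sum \big(\tfrac{\tau^2(x)}{u(x)}+\tfrac{\tau^2(y)}{u(y)}\big)(u(x)-u(y))\Cna(x,y)$ or a comparable expression; the strategy is to bound this using \eqref{K1}. Specifically, one factors out $|\Cna(x,y)| = \frac{|\Cna(x,y)|}{J^{(n)}(x,y)^{1/2}}\cdot J^{(n)}(x,y)^{1/2}$, applies Cauchy–Schwarz, and then uses the second part of \eqref{K1} (comparability of $\cE^{(n),J^{(n)}}$ with $\cEns$ on $\Bn_{2r}$) together with the auxiliary estimate \eqref{eq:quantifiedK1consequence} from \autoref{lemma:elementary} to absorb a small multiple of the logarithmic term and of $\cEns$-type quantities, leaving over terms controlled by $\|u\|$-independent constants times $\rho^{-\alpha}\mu^{(n)}(\Bn_{r+\rho})$ and the main logarithmic term on the left. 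One must be somewhat careful that the weight $W(x)=n^\alpha\sum_y |\Cna(x,y)|^2/J^{(n)}(x,y)$ enters in a way that \eqref{eq:quantifiedK1consequence} can absorb after choosing the free parameter $\delta$ small.

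I expect the main obstacle to be the bookkeeping in the antisymmetric term: unlike the symmetric part, there is no clean pointwise inequality, and one has to carry the division by $u$ (hence the factors $\tau^2/u$) through a Cauchy–Schwarz step while simultaneously keeping the error proportional to $\rho^{-\alpha}\mu^{(n)}(\Bn_{r+\rho})$ rather than something depending on $\eps$ or on $\|u\|$. The resolution is that the $\log$-term itself has the structure $\tau(x)\tau(y)(\log\frac{u(x)}{\tau(x)}-\log\frac{u(y)}{\tau(y)})^2$, which after multiplying numerator and denominator appropriately reproduces $\frac{(u(x)-u(y))^2}{u(x)u(y)}$-type quantities up to lower-order cutoff errors, so the Cauchy–Schwarz split can indeed be closed by absorbing a fraction of the left-hand side. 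Since the continuous analogue is carried out in detail in \cite{KaWe22} and, as noted there and throughout this section, the discrete sums behave exactly like the corresponding integrals once the assumptions are rephrased via $\widetilde C^{(n)}$, $\widetilde J^{(n)}$, $\widetilde j^{(n)}$ as in the proof of \autoref{thm:PHI}, the argument transfers line by line; I would therefore present the symmetric-part computation and the pointwise inequality, indicate the Cauchy–Schwarz/\eqref{K1} mechanism for the antisymmetric part, and refer to \cite{KaWe22} for the remaining routine estimates.
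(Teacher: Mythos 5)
Your symmetric-part sketch is broadly consistent with what the paper intends: the paper itself gives no argument for this lemma beyond stating that it is the discrete version of Lemma~3.1 in \cite{KaWe22} and ``can be proved in the exact same fashion'' after the rescaling $\widetilde C^{(n)}=n^{\alpha+d}\Cn$, so your concluding deference to that reference is the right move, and the pointwise inequality for $-(a-b)(s^2/a-t^2/b)$ combined with the cutoff bound \eqref{eq:cutoff} is indeed how the $\Cns$-contribution is treated. One framing remark: the lemma is stated for an arbitrary nonnegative $u$ with $u>\eps$ on $\Bn_{2r}$; there is no equation, hence nothing to ``test a weak formulation'' against. The statement is an unconditional lower bound for $\cEn(u,-\tau^2 u^{-1})$, and you simply expand and estimate this bilinear form.

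There is, however, a genuine gap in your handling of the antisymmetric part. You assert that the Cauchy--Schwarz split against $J^{(n)}$ closes because the logarithmic term ``reproduces $(u(x)-u(y))^2/(u(x)u(y))$-type quantities up to lower-order errors.'' That comparability is one-sided and fails in the direction you need: with $s=\log(u(x)/u(y))$ one has $(u(x)-u(y))^2/(u(x)u(y))=4\sinh^2(s/2)\ge s^2$, with unbounded ratio, and since $u$ is only bounded below the relative-increment density is in general much larger than the log-Dirichlet density. A Cauchy--Schwarz that produces such a factor therefore leaves an exponentially large term that cannot be reabsorbed. The missing ingredient is a linearization of the antisymmetric contribution \emph{before} Cauchy--Schwarz. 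Writing the symmetrized summand with $Z:=\log\frac{u(y)}{\tau(y)}-\log\frac{u(x)}{\tau(x)}$ (set to $0$ when $\tau(x)\tau(y)=0$), one checks the algebraic identity that, modulo the cutoff errors $-\Cns(x,y)(\tau(x)-\tau(y))^2-\Cna(x,y)(\tau^2(x)-\tau^2(y))$, the summand equals $\tau(x)\tau(y)\bigl[\Cn(x,y)(e^Z-1)+\Cn(y,x)(e^{-Z}-1)\bigr]$. Then the elementary inequalities $e^{\pm Z}-1\mp Z\ge 0$, together with $\Cn\ge 0$, give $\Cn(x,y)(e^Z-1)+\Cn(y,x)(e^{-Z}-1)\ge 2\Cna(x,y)\,Z+\min\bigl(\Cn(x,y),\Cn(y,x)\bigr)\bigl(e^Z+e^{-Z}-2\bigr)$. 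This is precisely where \eqref{K2} enters: $\min(\Cn(x,y),\Cn(y,x))=\Cns(x,y)-|\Cna(x,y)|\ge(1-D)\,j^{(n)}(x,y)$ produces the good quadratic term via $e^Z+e^{-Z}-2\ge Z^2$, while the remaining $2\Cna Z$ contribution is genuinely \emph{linear} in the log-difference, so that the Cauchy--Schwarz against $J^{(n)}$ combined with \eqref{K1} absorbs it into a small multiple of the log-Dirichlet term plus a quantity controlled by $\rho^{-\alpha}\mu^{(n)}(\Bn_{r+\rho})$. Your proposal lists \eqref{K2} among the hypotheses but never uses it; without this linearization step the absorption does not close.
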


\begin{remark}[extensions and simplifications]
\begin{itemize}
\item[(i)] For nonnegative solutions $u$ to $\partial_t u -L^{(n)}u = 0$ in a suitable time-space cylinder one can also establish a full Harnack inequality under slightly stronger assumptions. We refer to \cite{KaWe22b} for a discussion of such estimate for nonsymmetric integro-differential operators in $\R^d$.

\item[(ii)] One can prove \eqref{eq:PHI}, \eqref{eq:EHI}, \eqref{eq:PHR}, \eqref{eq:EHR} for $L^{(n)}$ and \autoref{cor:regest}, \autoref{lemma:logu} also under the following localized condition instead of \eqref{K1}:\\ 
There exists $C > 0$ such that for every $n \in \N$, $x_0 \in \nZd$:
\begin{align}
\label{K1loc}\tag{$\text{K1}_{loc}$}
\left\Vert n^{\alpha}\sum_{y \in \Bn_2(x_0)} \frac{\vert\Cna(\cdot,y)\vert^2}{\Cns(\cdot,y)} \right\Vert_{L^{\theta}(\Bn_{2}(x_0))} \le C.
\end{align}

\item[(iii)] We emphasize that one can prove \eqref{eq:resHRE} directly via establishing a version of \eqref{eq:EHR} for solutions to $-L^{(n)} u + \lambda u = f$, where $f \in L^{\infty}(\nZd)$ and $\lambda > 0$.
The existence of a killing term $\lambda u$ for $\lambda \ge 0$ in \eqref{eq:EHR} does not influence the proofs significantly since the term $\lambda u$ can be treated in a similar way as a source term.
\end{itemize}
\end{remark}

\subsection{Weak parabolic maximum principle}

In this section we provide a parabolic maximum principle for weak subsolutions to $\partial_t u - L^{(n)} u = 0$ in the spirit of Proposition 5.2 in \cite{GHL09}, where such result was proved in the symmetric case. An elliptic version for nonsymmetric operators has been established in Theorem 4.1 in \cite{FKV15}. Let us point out that all constants in the proof below might (and are allowed to) depend on $n \in \N$.

\begin{proposition}
\label{prop:pmp}
Assume that \eqref{K1}, \eqref{Sob} hold true for some $\alpha \in (0,2]$, $\sigma > 0$ and $\theta \in (\frac{d}{\alpha},\infty]$. Let $n \in \N$, $T > 0$ and $\frac{\sigma}{n} < R \le 1$. Let $u$ be a subsolution to $\partial_t u - L^{(n)} u = 0$ in $(0,T) \times \Bn_{R}$ for some ball $\Bn_R \subset\nZd$, such that
\begin{itemize}
\item $u_+(t) \in L^2_c(\Bn_R)$ for every $t \in (0,T)$,
\item $u_+(t) \to 0$ in $L^2(\Bn_{R})$ as $t \searrow 0$ and $u(0) \le 0$ in $\Bn_R$.
\end{itemize}
Then $u \le 0$ a.e. in $(0,T) \times \Bn_R$.
\end{proposition}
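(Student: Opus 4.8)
The plan is to use the standard energy method for proving maximum principles for parabolic equations, testing the subsolution inequality against the positive part $u_+$ of the solution itself. Since all constants are allowed to depend on $n$, the key tool is G\r{a}rding's inequality \eqref{eq:Garding-markov} from \autoref{lemma:elementary} (which applies because \eqref{K1}, \eqref{Sob}, and implicitly \eqref{CTail} hold) together with the strong continuity and the fact that $L^{(n)}$ is bounded on $L^2(\nZd)$ by \autoref{lemma:markov-welldef}(ii). First I would fix the test function $\phi = u_+(t) \in L^2_c(\Bn_R)$, which is admissible since $\phi \ge 0$ and compactly supported in $\Bn_R$, so the subsolution inequality \eqref{eq:supercal} gives, for a.e. $t \in (0,T)$,
\begin{align*}
n^{-d}\sum_{x \in \Bn_R} \partial_t u(t,x) u_+(t,x) + \cEn(u(t),u_+(t)) \le 0.
\end{align*}
For the time derivative term I would use the chain rule $n^{-d}\sum_x \partial_t u \cdot u_+ = \tfrac12 \tfrac{d}{dt}\Vert u_+(t)\Vert_{L^2(\nZd)}^2$, which is valid since $u_+(t) \in L^2$ and $\partial_t u \in L^1_{loc}(I;L^2(\Bn_R))$, so that $t \mapsto \Vert u_+(t)\Vert_{L^2}^2$ is absolutely continuous with the stated derivative (and its initial value is $0$ by hypothesis).

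The main obstacle is controlling the bilinear-form term $\cEn(u(t),u_+(t))$ from below by something like $c\cEns(u_+(t),u_+(t)) - c'\Vert u_+(t)\Vert_{L^2}^2$. This is a Stampacchia-type estimate: writing $u = u_+ - u_-$, one has to show $\cEns(u, u_+) \ge \cEns(u_+,u_+)$ (the negative part only helps, since $\cEns$ is a symmetric Dirichlet form and $(u_+(x)-u_+(y))(u(x)-u(y)) \ge (u_+(x)-u_+(y))^2$ pointwise, using that the function $s \mapsto s_+$ is $1$-Lipschitz and monotone) and, for the antisymmetric part, a bound of the form $|\cEna(u,u_+)| \le \tfrac12\cEns(u_+,u_+) + c\Vert u_+\Vert_{L^2}^2$. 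The latter follows from the same argument used to prove \eqref{eq:Garding-markov}: by \eqref{eq:quantifiedK1consequence} and Cauchy--Schwarz one gets $|\cEna(v,v)| \le \delta\cEns(v,v) + c(\delta)\Vert v\Vert_{L^2}^2$ for any $v \in L^2(\nZd)$, applied here with $v = u_+(t)$; one must be a little careful because $\cEna(u,u_+)$ is not literally $\cEna(u_+,u_+)$, but the cross term involving $u_-$ can be absorbed using $u_+ u_- = 0$ and the fact that, after splitting the double sum, the surviving contributions are either nonnegative (the $\cEns$-part, which helps) or estimated exactly as the $v=u_+$ case. Thus, dropping the nonnegative $\cEns(u_+,u_+)$ contribution,
\begin{align*}
\cEn(u(t),u_+(t)) \ge -c\Vert u_+(t)\Vert_{L^2(\nZd)}^2.
\end{align*}

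Combining, one obtains for a.e. $t \in (0,T)$ the differential inequality
\begin{align*}
\frac{d}{dt}\Vert u_+(t)\Vert_{L^2(\nZd)}^2 \le 2c\Vert u_+(t)\Vert_{L^2(\nZd)}^2,
\end{align*}
with $\Vert u_+(0)\Vert_{L^2(\nZd)}^2 = 0$. Then I would conclude by Gr\"onwall's lemma that $\Vert u_+(t)\Vert_{L^2(\nZd)}^2 \le e^{2ct}\cdot 0 = 0$ for every $t \in (0,T)$, hence $u_+(t) = 0$ in $L^2(\nZd)$, i.e.\ $u(t) \le 0$ a.e.\ in $\Bn_R$ for a.e.\ $t$. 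Since $\nZd$ is discrete this is a pointwise statement, giving $u \le 0$ a.e.\ in $(0,T) \times \Bn_R$ as claimed. I would remark that the role of \eqref{Sob} here is only to make the constant in \eqref{eq:quantifiedK1consequence} finite (for $\theta < \infty$), and that no uniformity in $n$ is needed anywhere in this argument, which is why the hypotheses are lighter than those of the Harnack/H\"older results above.
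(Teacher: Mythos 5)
Your overall strategy --- test the subsolution inequality with $\phi = u_+(t)$, control the bilinear-form term via G\r{a}rding's inequality, close with a Gr\"onwall-type argument --- is the same one the paper uses. The gap is in the key Stampacchia step, where you propose to split $\cEn(u,u_+)$ into symmetric and antisymmetric parts and bound $|\cEna(u,u_+)|$ separately by $\tfrac12\cEns(u_+,u_+) + c\Vert u_+\Vert_{L^2(\nZd)}^2$. That bound is false in general. Indeed, writing
\begin{align*}
\cEna(u,u_+) = \cEna(u_+,u_+) + 2n^{\alpha-d}\sum_{x \in \nZd}\sum_{y \in \nZd} u_-(y)u_+(x)\Cna(x,y),
\end{align*}
the cross term scales linearly in the size of $u_-$ and so cannot be dominated by any quantity built from $u_+$ alone: taking $u$ with $u_+ = \eps\,\mathbbm{1}_{\{x_0\}}$, $u_- = M\,\mathbbm{1}_{\{y_0\}}$ for a pair with $\Cna(x_0,y_0)\neq 0$, the cross term is of order $M\eps$ while your proposed right-hand side is of order $\eps^2$. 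The sector estimate you invoke controls $\cEna(u_+,u_+)$, not the cross term, and $u_+u_-=0$ does not make the cross term vanish since it pairs $u_-(y)$ with $u_+(x)$ at \emph{different} sites.

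The paper sidesteps this entirely by not decomposing into $\Cns$ and $\Cna$. Using the pointwise identity
\begin{align*}
(u(x)-u(y))u_+(x) = (u_+(x)-u_+(y))u_+(x) + u_-(y)u_+(x)
\end{align*}
together with the nonnegativity of the full conductance $\Cn$, one gets directly
\begin{align*}
\cEn(u,u_+) = \cEn(u_+,u_+) + 2n^{\alpha-d}\sum_{x}\sum_{y} u_-(y)u_+(x)\Cn(x,y) \ge \cEn(u_+,u_+),
\end{align*}
and only then applies G\r{a}rding's inequality \eqref{eq:Garding-markov} to the quadratic form $\cEn(u_+,u_+)$. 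In your decomposition the ``helpful'' nonnegative cross term in $\cEns(u,u_+)$ is exactly what is needed to cancel the uncontrolled cross term in $\cEna(u,u_+)$ --- they recombine with weight $\Cns + \Cna = \Cn \ge 0$ --- but splitting them destroys this cancellation. Replacing your treatment of the bilinear form with the single-line identity above repairs the argument; the remaining steps (Steklov/chain rule for the time derivative, Gr\"onwall or the paper's equivalent geometric iteration, pointwise conclusion on the lattice) are fine and essentially identical to the paper's.
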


\begin{proof}
Note that
\begin{equation}
\label{eq:pmp1help0}
(u(x) - u(y))u_+(x) = (u_+(x)-u_+(y))u_+(x) + u_-(y)u_+(x).
\end{equation}
By G\r{a}rding's inequality \eqref{eq:Garding-markov} it follows that for every $t > 0$
\begin{equation}
\label{eq:pmp1help1}
\cEn(u(t),u_+(t)) \ge \cEn(u_+(t),u_+(t)) \ge \frac{1}{2} \cEns(u_+(t),u_+(t)) - c_1 \Vert u_+(t) \Vert_{L^2(\nZd)}^2
\end{equation}
for some $c_1 > 0$. 
Since $u_+(t) \in L^2_c(\Bn_R)$, we can test the weak formulation of $\partial_t u - L^{(n)} u = 0$ with $u_+(t)$ for every $t \in (0,T)$. Then, we integrate in time over an arbitrary interval $(t_1,t_2) \subset (0,T)$, apply integration by parts formula and obtain that for every $(t_1,t_2) \subset (0,T)$:
\begin{equation}
\label{eq:steklovapplmax}
n^{-d} \sum_{x \in \Bn_{R}} u_+^2(t_2,x) - n^{-d}  \sum_{x \in \Bn_{R}} u_+^2(t_1,x) + \int_{t_1}^{t_2} \cEn(u(t),u_+(t)) \d t \le 0.
\end{equation}
Note that the above explanation can be made rigorous with the help of Steklov averages (see \cite{FeKa13}, or \cite{KaWe22b}). From \eqref{eq:pmp1help1} and \eqref{eq:steklovapplmax}, it follows:
\begin{equation*}
n^{-d} \hspace{-0.2cm} \sum_{x \in \Bn_{R}}\hspace{-0.2cm} u_+^2(t_2,x) - n^{-d} \hspace{-0.2cm} \sum_{x \in \Bn_{R}}\hspace{-0.2cm} u_+^2(t_1,x) + \frac{1}{2}\int_{t_1}^{t_2} \hspace{-0.2cm} \cEns(u_+(t),u_+(t)) \d t \le c_1 n^{-d} \hspace{-0.2cm} \int_{t_1}^{t_2} \hspace{-0.2cm} \sum_{x \in \Bn_{R}} \hspace{-0.2cm} u_+^2(t,x)  \d t.
\end{equation*}

Let us define $A(t) := n^{-d} \sum_{x \in \Bn_{R}} u_+^2(t,x)$ and set $\delta := \frac{1}{2c_1}$. Then, for every $t_1 \in (0,T)$:
\begin{equation*}
\sup_{t \in (t_1,t_1 + \delta)} A(t) + \frac{1}{2} \int_{t_1}^{t_1 + \delta} \cEns(u_+(t),u_+(t)) \d t \le \frac{1}{2}\sup_{t \in (t_1,t_1 + \delta)} A(t) + A(t_1),
\end{equation*}
so for every $k > 0$:
\begin{equation}
\sup_{t \in (t_1,t_1 + k\delta)} A(t) \le 2^k A(t_1).
\end{equation}
Since $A(t) \searrow 0$, as $t \searrow 0$, by assumption, it follows that $A(t) \le 0$ for every $t \in (0,T)$, which gives the desired result.
\end{proof}

\begin{remark}
\begin{itemize}
\item[(i)] As a standard application of \autoref{prop:pmp} we have the following monotonicity property for the restricted semigroups: Whenever $\Bn_1 \subset \Bn_2$ for two sets $\Bn_1, \Bn_2 \subset \nZd$, it holds for every nonnegative $f \in L^{\infty}(\nZd)$:
\begin{align}
\label{eq:ressgmon}
P^{\Bn_1}_t f(x) \le P^{\Bn_2}_t f(x), ~~\forall t > 0,~ x \in \nZd.
\end{align}
A proof can be found in \cite{GrHu08} (Lemma 4.16). Note that \eqref{eq:ressgmon} is immediate, when using the representation via the corresponding Markov chains $X^{(n)}$.
\item[(ii)] In particular it holds that $P^{\Bn_1}_t f(x) \le P^{(n)}_t f(x)$, which follows by iteratively applying \eqref{eq:ressgmon} to a sequence of balls $(\Bn_{R_i})_{i}$ with $R_i \nearrow \infty$. 
\end{itemize}
\end{remark}

\section{Tightness}
\label{sec:tightness}

The goal of this section is to establish the following theorem:

\begin{theorem}
\label{thm:tightnessprep}
Let $A > 0$, $B \in (0,1)$. Assume that \eqref{K1}, \eqref{K2}, \eqref{Poinc}, \eqref{Sob}, \eqref{CTail} hold true for some $\alpha \in (0,2]$, $\sigma > 0$ and $\theta \in (\frac{d}{\alpha},\infty]$. Then there exists $t_0 \in (0,1)$ such that for every $0 < R \le 1$ there is $N \in \N$ such that for every  $n \ge N$ and $x \in \nZd$ it holds:
\begin{align*}
\PP^x\left(\sup_{t \le (t_0 R)^{\alpha}} \vert \Xn_t - x \vert > AR \right) \le B.
\end{align*}
\end{theorem}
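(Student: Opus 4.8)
The plan is to reduce the statement to an iterated ``survival'' estimate for the killed semigroup, in the spirit of \cite{Bos20}, \cite{GHH18}. First I would reformulate the event $\{\sup_{t \le (t_0R)^\alpha}|\Xn_t - x| > AR\}$ in terms of the exit time $\tau_{\Bn_{AR}(x)}$: it is precisely the event that $\Xn$ leaves the ball $\Bn_{AR}(x)$ before time $(t_0R)^\alpha$. The key auxiliary quantity is the survival probability $\PP^x(\tau_{\Bn_{AR}(x)} > t) = P_t^{\Bn_{AR}(x)}\1(x)$, and the strategy is to show that for a suitable choice of a small time scale $\delta R^\alpha$ (with $\delta$ depending only on the structural constants), one has a uniform lower bound $\inf_{z \in \Bn_{(A/2)R}(x)} \PP^z(\tau_{\Bn_{AR}(x)} > \delta R^\alpha) \ge c_0 > 0$, and then iterate. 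Alternatively, as the authors hint, one can run this through the weak parabolic Harnack inequality \autoref{thm:PHI} applied to the caloric function $u(t,z) = P_t^{\Bn_{AR}(x)}\1(z)$ (or rather to $1$ minus it, which is a nonnegative supersolution of the heat equation away from the origin), which is the cleanest way to get the constant uniform in $n$.

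The key steps in order: (1) Fix $x \in \nZd$ and write $B = \Bn_{AR}(x)$, $B' = \Bn_{(A/2)R}(x)$. Consider $w(t,z) := 1 - P_t^{B}\1(z) = \PP^z(\tau_B \le t)$. By \eqref{eq:ressgmon} and Markovianity, $w$ is nonnegative, and on $(0,\infty)\times B$ it solves $\partial_t w - L^{(n)}w = 0$ weakly (since $\1$ restricted to $B$ is caloric for the killed form, and the constant $1$ is caloric for $L^{(n)}$ on all of $\nZd$ — here one must be slightly careful because $L^{(n)}1 = 0$ only pointwise, not in $L^2$; this is handled exactly as in \cite{GHH18} by localizing). (2) Use the $\log(u)$-type / survival estimate: there is a time scale $t_* = (sR)^\alpha$, with $s \in (0,1)$ depending only on the constants in \eqref{K1}, \eqref{K2}, \eqref{Poinc}, \eqref{Sob}, \eqref{CTail}, such that $\sup_{(0,t_*) \times B'} w \le 1/2$, i.e. $\inf_{(0,t_*)\times B'} P_t^{B}\1 \ge 1/2$. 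This is the analytic heart of the argument and follows from \autoref{lemma:logu} combined with the local Sobolev inequality \eqref{Sob-loc} and a De Giorgi/Moser iteration exactly as in \cite{KaWe22}, \cite{Bos20}; the restriction $R > \sigma/n$ (equivalently $n \ge N = N(R)$) is needed so that the volume regularity \eqref{eq:volreg} and hence \eqref{Poinc}, \eqref{Sob-loc} are available on the relevant scales. (3) Iterate: by the Markov property, for the chain started at $x \in B'$,
\begin{align*}
\PP^x(\tau_B > k t_*) \ge \Big( \inf_{z \in B'} \PP^z(\tau_B > t_*) \Big)^k \ge 2^{-k},
\end{align*}
wait — this goes the wrong way; instead one bounds $\PP^x(\tau_B \le k t_*) \le 1 - (1/2)^k$ is useless, so the correct iteration is the complementary one: from $\sup_{B'} w(t_*,\cdot) \le 1/2$ and the semigroup property one gets, splitting at time $t_*$ and using that after time $t_*$ the chain is either already out of $B$ or sits somewhere — the sharp statement is $\PP^x(\sup_{t\le t_*}|\Xn_t - x| > AR) \le 2 \sup_{(0,t_*)\times B'}\PP^z(\tau_{\Bn_{(A/2)R}(z)} \le t_*)$, which is again $\le 1/2$ by the same estimate applied at the smaller radius, and one feeds this back $m$ times to reach any prescribed $B \in (0,1)$: choosing $m$ with $2^{-m} \le B$ and then $t_0$ so that $(t_0R)^\alpha \le $ the resulting time scale gives the claim. (4) Finally, translate back: choose $t_0 = t_0(A,B,\text{constants})$ and $N = N(R)$ (from the constraint $R/2^m > \sigma/n$ across the finitely many iterated scales) so that everything is licit; this yields the stated uniform bound for all $n \ge N$ and all $x$.

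The main obstacle is step (2): establishing the uniform-in-$n$ lower bound $\inf_{(0,(sR)^\alpha)\times B'} P_t^B \1 \ge 1/2$ with $s$ independent of $n$. This is where the nonsymmetry bites — the dual semigroup is not sub-Markovian, so one cannot symmetrize freely — and one must run the iteration directly on $w$ using \autoref{lemma:logu} and the Garding/sector estimates of \autoref{lemma:elementary}, all of which have been arranged to have $n$-independent constants precisely for this purpose; the bookkeeping of which scales $R/2^j$ must exceed $\sigma/n$ (hence the $n \ge N(R)$) and verifying that $w$ genuinely is a weak subsolution on the punctured cylinder (the $L^2$ versus pointwise issue with $L^{(n)}1=0$) are the remaining technical points, both routine given \autoref{prop:pmp} and the machinery already in place.
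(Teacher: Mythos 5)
Your steps (1) and (2) are on the right track and match the paper: the survival estimate $\inf_{\Bn_{R/32}}P^{\Bn_{R/8}}_t\1 \ge \eps$ for $t\le(\delta R)^{\alpha}$ (the paper's Lemma~\ref{lemma:SE}) is indeed derived from the weak parabolic Harnack inequality, the $\log(u)$-estimate, and the parabolic maximum principle, with $n$-independent constants. The problem is in step (3), and you have correctly sensed that something is off but then substituted an argument that does not close.

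The survival estimate produces a \emph{fixed} $\eps>0$; the theorem needs a bound that is $\le B$ for \emph{arbitrary} $B\in(0,1)$, so some iteration must genuinely improve the constant. Your first attempt, $\PP^x(\tau_B>kt_*)\ge\eps^k$, degrades, as you noted. Your corrected version — a chaining bound of the shape $\PP^x(\tau_{\Bn_{AR}(x)}\le t_*)\le 2\sup_z\PP^z(\tau_{\Bn_{AR/2}(z)}\le t_*/2)$, iterated down in scale — also fails: the factor of $2$ accumulates at each step, so after $m$ iterations you get a bound of the form $2^m(1-\eps)$, which grows rather than decays. Moreover, this down-scale chaining implicitly requires that the process exit a small ball via a small displacement; for the genuinely nonlocal case $\alpha\in(0,2)$ the process can overshoot the intermediate ball by a single large jump, and the argument breaks unless one truncates the conductances — which is precisely what the paper is designed to avoid.

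The missing idea is the paper's iteration lemma (Lemma~\ref{lemma:SEit}), which goes \emph{up} in scale rather than down. Given $1-P^{\Bn_R}_t\1\le H$ on a set $M\subset\Bn_R$, one compares $P^{\Bn_R}_t\1$ with $P^{\Bn_{CR}}_t\1$ for $C\in(1,2)$ via the barrier
\[
u(t,x)= P^{\Bn_R}_t\1(x)-\frac{P^{\Bn_{CR}}_t\1(x)-\eps\psi(x)}{1-\eps}-\beta t\,\psi(x),
\]
shows it is a subsolution (the sign of $-\cEn(\psi,\phi)$ is controlled using \eqref{CTail}, hence $\beta\sim R^{-\alpha}$), and applies the parabolic maximum principle \autoref{prop:pmp}. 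This yields $1-P^{\Bn_{CR}}_t\1\le\kappa H$ on $M$ for $t\le(\gamma R)^{\alpha}$, with $\kappa=1-\eps/2<1$ \emph{independent} of $H,R,C,n$. Starting from the trivial bound $H=1$ on $\Bn_{R_0}$ with $R_0=AR/2$ and iterating $k$ times up to $R_k=C^kR_0\le AR$ gives $1-P^{\Bn_{AR}}_t\1\le\kappa^k$, and one then chooses $C$ close enough to $1$ (hence $k$ large) so that $\kappa^k\le B$ while still $R_k\le AR$. This multiplicative gain at each step is exactly what your chaining lacks, and is the mechanism that makes $B$ arbitrarily small. I'd encourage you to work out Lemma~\ref{lemma:SEit}: the construction of the subsolution and the choice of $\beta$ and $\psi$ are the heart of the proof.
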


This result implies tightness of the laws of $(X^{(n)})_n$ in the Skorohod space $D([0,T];\R^d)$ for every $T > 0$ (see \autoref{thm:tightness}).
Such statement is standard in the literature for symmetric Markov chains (see Proposition 3.1 in \cite{BKU10}, Proposition 3.4 in \cite{BaKu08}, or Theorem 3.1 in \cite{HuKa07}) and was also proved in \cite{DeKu13} (see Proposition 3.3). The restriction to $0 < R \le 1$ and $n \ge N$ stems from the admissible range of radii in \autoref{thm:PHI}.

We prove \autoref{thm:tightnessprep} by adapting the arguments from Chapter 7 in \cite{Bos20} to a nonsymmetric setting. The procedure loosely follows the path laid out by Grigor'yan and coauthors (see \cite{GHL14}, \cite{GHH17}, \cite{GHH18}) who investigate the validity of heat kernel bounds for symmetric regular Dirichlet forms on metric measure spaces in connection to geometric properties of the underlying space. It turns out that the weak Harnack inequality (see \autoref{thm:PHI}) implies two-sided estimates for exit times of $X^{(n)}_t$, which in turn imply a so-called survival estimate. The survival estimate has proved to be a helpful tool for the derivation of heat kernel bounds but can also be applied when proving \autoref{thm:tightnessprep}. Our proof uses an iteration technique reminiscent of \cite{GHL14} but avoids truncation of conductances (see also \cite{GrHu14}). We point out that the proofs of all results in this section are purely analytic, the main tool being the weak Harnack inequality \autoref{thm:PHI} and the parabolic maximum principle \autoref{prop:pmp}.

First, we establish an estimate for the exit times of $(X^{(n)})$ from balls $\Bn_r$. While the proof of the lower bound \eqref{eq:ETE2} works as for Lemma 5.5 in \cite{GHH17}, establishing the upper bound \eqref{eq:ETE1} is more involved due to the lack of symmetry. We give a proof, inspired by Proposition 3.1 in \cite{Del99}, that works for long- and bounded range at the same time and does not make use of the dual semigroup $\widehat{P}_t^{(n)}$.  

Let us introduce the Green operator $G^{\Bn}$ defined by
\begin{align*}
G^{\Bn} f(x) = \int_{0}^{\infty} P^{\Bn}_t f(x) \d t = \EE^x \left( \int_{0}^{\tau_{\Bn}} f(\Xn_t) \d t \right), ~~f\in L^2(\Bn). 
\end{align*}
In particular, $G^{\Bn} \mathbbm{1}(x) = \EE^x \tau_{\Bn}$ for every $x \in \Bn$. 

\begin{lemma}
\label{lemma:ETE}
Assume that \eqref{K1}, \eqref{K2}, \eqref{Poinc}, \eqref{Sob}, \eqref{CTail} hold true for some $\alpha \in (0,2]$, $\sigma > 0$ and $\theta \in (\frac{d}{\alpha},\infty]$. Then there exist $c_1,c_2 > 0$ such that for every $n \in \N$, $\frac{32\sigma}{n} < R \le 1$, $x_0 \in \nZd$:
\begin{align}
\label{eq:ETE2}
\EE^x\left(\tau^{(n)}_{\Bn_{R/8}(x_0)}\right) &\le c_1 R^{\alpha} , ~~ \forall x \in \Bn_{R/8},\\
\label{eq:ETE1}
\EE^x\left(\tau^{(n)}_{\Bn_{R/8}(x_0)}\right) &\ge c_2 R^{\alpha}, ~~\forall x \in \Bn_{R/32}.
\end{align}
\end{lemma}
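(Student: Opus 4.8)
The lower bound \eqref{eq:ETE2} is the standard argument: the constant function $\1_{\Bn_{R/8}(x_0)}$ is a bounded subsolution to $-L^{(n)}u = 1$ on a slightly smaller domain up to the tail contributions, and a mean-value-type estimate combined with the volume regularity \eqref{eq:volreg} gives $\EE^x \tau_{\Bn_{R/8}} = G^{\Bn_{R/8}}\1(x) \le c_1 R^\alpha$ for $x$ in the full ball $\Bn_{R/8}$; here one follows Lemma 5.5 in \cite{GHH17} essentially verbatim, using the Garding and sector bounds from \autoref{lemma:elementary} to make sense of $G^{\Bn_{R/8}}\1$ and using \eqref{CTail} to control the nonlocal tail $n^{\alpha}\sum_{y\notin\Bn_{R/8}}\Cns(x,y) \le c R^{-\alpha}$ which enters as a bounded source. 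So the substance of the lemma is \eqref{eq:ETE1}, the lower bound on the exit time, which we obtain by adapting Proposition 3.1 in \cite{Del99}.

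For \eqref{eq:ETE1} the plan is to work with $w(t,x) := P^{(n)}_t \1_{\Bn_{R/8}(x_0)}(x) = \PP^x(\Xn_t \in \Bn_{R/8}(x_0))$, which solves $\partial_t w - L^{(n)} w = 0$ in $(0,\infty)\times\nZd$. First I would note that $w(0,\cdot) \equiv 1$ on $\Bn_{R/8}$, and apply the parabolic Hölder estimate \autoref{thm:PHR} (at scale $R$, centered at $x_0$) together with the initial value to deduce a quantitative lower bound $w(t,x) \ge \tfrac12$ for all $x \in \Bn_{R/16}(x_0)$ and all $0 < t \le (\eps R)^{\alpha}$, for a small absolute constant $\eps \in (0,1)$ — this is exactly the place the uniform-in-$n$ regularity is essential. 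Equivalently, $\PP^x(\Xn_t \notin \Bn_{R/8}(x_0)) \le \tfrac12$, hence $\PP^x(\tau_{\Bn_{R/8}(x_0)} \le t) \le \tfrac12$ for $x \in \Bn_{R/16}(x_0)$ and $t \le (\eps R)^{\alpha}$ (one has to be slightly careful: $\{\tau_{\Bn_{R/8}} > t\}$ is not literally $\{\Xn_t \in \Bn_{R/8}\}$, so one argues via the killed semigroup $P^{\Bn_{R/8}}_t$ and the survival estimate, or directly: $\PP^x(\tau_{\Bn_{R/8}} \le t)$ is controlled by combining the bound on $w$ with the strong Markov property and the corresponding bound at the first exit time, i.e. an iteration of the survival estimate in the style of \cite{GHL14}). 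Then Markov's inequality gives $\EE^x \tau_{\Bn_{R/8}(x_0)} \ge (\eps R)^{\alpha}\,\PP^x(\tau_{\Bn_{R/8}(x_0)} > (\eps R)^{\alpha}) \ge \tfrac12 (\eps R)^{\alpha} =: c_2 R^{\alpha}$ for all $x \in \Bn_{R/16}(x_0)$, and in particular for $x \in \Bn_{R/32}(x_0)$, which is the claim. The radius bookkeeping (why $R/32$ on the left, $R/8$ for the ball, $32\sigma/n < R$) is driven by: \autoref{thm:PHR} needs $R/2 > \sigma/n$ on balls comparable to $R$, and we need a definite gap between the point $x$ and the boundary of $\Bn_{R/8}$ so that the Hölder oscillation estimate beats $1/2$.

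\textbf{Main obstacle.} The genuinely delicate step is turning the pointwise lower bound on $w(t,x)=P^{(n)}_t\1_{\Bn_{R/8}}(x)$ into a lower bound on $\PP^x(\tau_{\Bn_{R/8}} > t)$ in the \emph{nonsymmetric} setting. In the symmetric case one would simply relate $P^{(n)}_t$ and the killed $P^{\Bn_{R/8}}_t$ via the dual (reversed) process, but here the dual semigroup $\widehat{P}^{(n)}_t$ is not Markovian, so that route is blocked — this is precisely the issue the authors flag before the lemma. The fix, following \cite{Del99}, is to avoid the dual entirely: write, for $x \in \Bn_{R/16}(x_0)$ and $s \le (\eps R)^\alpha$,
\begin{align*}
\PP^x(\Xn_s \notin \Bn_{R/8}(x_0))
\ \ge\ \PP^x\big(\tau_{\Bn_{R/8}(x_0)} \le s,\ \Xn_s \notin \Bn_{R/16}(x_0)\big)
\ \ge\ \PP^x(\tau_{\Bn_{R/8}(x_0)} \le s)\cdot \inf_{r \le s,\, z\in\partial}\PP^z\big(\Xn_{s-r}\notin \Bn_{R/16}(x_0)\big),
\end{align*}
using the strong Markov property at $\tau_{\Bn_{R/8}(x_0)}$; combined with the Lévy system formula \autoref{lemma:LSformula} and \eqref{CTail} to control the probability that the chain, started near the boundary, has re-entered the small ball — and then one closes the loop because the left side is $\le \tfrac12$ by the Hölder estimate. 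Making the constants in this strong-Markov splitting uniform in $n$, and handling the (small but nonzero) probability that $\Xn$ jumps a macroscopic distance and re-enters, is where the real work lies; everything else is a routine application of the results already established above.
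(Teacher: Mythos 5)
Your sketch of \eqref{eq:ETE2} is not quite right and does not match the paper's argument. You claim $\1_{\Bn_{R/8}}$ is a supersolution of $-L^{(n)}u = cR^{-\alpha}$; but $-L^{(n)}\1_{\Bn_{R/8}}(x) = 2n^{\alpha}\sum_{y\notin\Bn_{R/8}}\Cn(x,y)$, and \eqref{CTail} only gives an \emph{upper} bound on this tail, not the lower bound you need. The paper does not assume any pointwise lower bound on long-range jumps; instead it proves a near-diagonal lower bound on the unkilled semigroup \eqref{eq:NDLHKB} by applying the weak parabolic Harnack inequality \autoref{thm:PHI} twice (once to a solution pasted from a constant onto the semigroup, once to the heat kernel itself), then observes this forces mass to escape $\Bn_{R/8}$, i.e. $P^{\Bn_{R/8}}_{t_0}\1 \le 1-\eps$, and iterates the semigroup property to get $G^{\Bn_{R/8}}\1 \le cR^\alpha$.

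For \eqref{eq:ETE1} there is a genuine gap, and it is the crux of the lemma. Your strategy is to show $P^{(n)}_t\1_{\Bn_{R/8}}(x)\ge 1/2$ via \autoref{thm:PHR} and then convert this into a lower bound on $\PP^x(\tau_{\Bn_{R/8}}>t)$. The conversion is exactly the nonsymmetric obstruction the paper flags, and your two proposed fixes do not close it. First, the displayed chain of inequalities is already broken at the first step: $\{\tau_{\Bn_{R/8}}\le s,\ \Xn_s\notin\Bn_{R/16}\}$ is \emph{not} contained in $\{\Xn_s\notin\Bn_{R/8}\}$ — a path can land in $\Bn_{R/8}\setminus\Bn_{R/16}$. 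Second, the L\'evy system formula only controls the rate of \emph{large} jumps; the real danger is that the process exits $\Bn_{R/8}$ by a \emph{short} jump and re-enters, and the re-entry probability from a point just outside $\Bn_{R/8}$ is comparable to $1/2$ by the very H\"older estimate you invoke, so it is not small. Third, your remark to "iterate survival estimates in the style of \cite{GHL14}" is circular: in this paper the survival estimate (\autoref{lemma:SE}) and its iteration (\autoref{lemma:SEit}) are \emph{derived from} \autoref{lemma:ETE}, not the other way around. What the paper actually does is avoid the killed-versus-unkilled issue entirely: it works directly with the truncated Green function $u(x)=\int_0^t P^{\Bn_{R/8}}_s\1(x)\,\d s+\eps$, which is a nonnegative supersolution of $-L^{(n)}u = 1-P^{\Bn_{R/8}}_t\1\ge 0$, applies the weak \emph{elliptic} Harnack inequality \eqref{eq:EHI} plus Jensen to bound $\inf_{\Bn_{R/32}} u$ in terms of the harmonic mean of $u$, and then bounds $n^{-d}\sum u^{-1}$ from above via the $\log u$-estimate \autoref{lemma:logu}, which the paper explicitly says is the ingredient "needed in the proof of \autoref{lemma:ETE}"; your proposal omits that lemma altogether. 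Taking $t\nearrow\infty$ and $\eps\searrow 0$ then yields $\EE^x\tau\ge c_2R^\alpha$. So you have identified \eqref{eq:ETE1} as the hard inequality and even cited the right inspiration (\cite{Del99}), but the argument you propose has a false inequality, relies on a tool (the L\'evy system) that cannot control the relevant event, and misses the $\log u$-estimate that actually carries the proof.
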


\begin{proof}
We start with the proof of the first inequality \eqref{eq:ETE2}. First, we prove the following lower bound on the heat semigroup $(P^{(n)}_t)$: There exist $c,\eps > 0$ such that for every $y_0 \in \nZd$ and $\frac{32\sigma}{n} < R \le 1$ it holds
\begin{align}
\label{eq:NDLHKB}
P^{(n)}_{t_0} \mathbbm{1}_{\Bn_{R/16}(y_0)}(y) \ge \eps, ~~\forall y \in \Bn_{R/2}(y_0),
\end{align}
where $t_0 := cR^{\alpha}$.\\
Let $y_0 \in \nZd$, $\frac{32\sigma}{n} < R \le 1$ be given. We define 
\begin{align*}
u(s,w) = \begin{cases}
&1, ~\text{ if } s \le (R/32)^{\alpha},\\
&P^{(n)}_{s-(R/32)^{\alpha}}\mathbbm{1}_{\Bn_{R/16}(y_0)}(w), ~\text{ if } s > (R/32)^{\alpha}.
\end{cases}
\end{align*}
Then $u$ is nonnegative and solves $\partial_t u - L^{(n)}u = 0$ in $(0,2(R/32)^{\alpha}) \times \Bn_{R/16}(y_0)$. In particular, note that $\partial_t u(w)$ exists for every $w \in \Bn_{R/16}(y_0)$ since $t \mapsto u(t,w)$ is absolutely continuous.
Using the weak parabolic Harnack inequality for $L^{(n)}$ (see \autoref{thm:PHI}), we compute:
\begin{align*}
1 &= \frac{n^{-d}}{\mu^{(n)}(\Bn_{R/64}(y_0))}\sum_{y \in \Bn_{R/64}(y_0)}\dashint_{(0,(R/64)^{\alpha})}\frac{n^{-d}}{\mu^{(n)}(\Bn_{R/64}(y_0))}\sum_{w \in \Bn_{R/64}(y_0)} u(s,w) \d s\\
&\le c_1 (nR/64)^{-d}\sum_{y \in \Bn_{R/64}(y_0)} \inf_{(s,w) \in (2(R/32)^{\alpha}-(R/64)^{\alpha},2(R/32)^{\alpha}) \times \Bn_{R/64}(y_0)} u(s,w)\\
&\le c_1 (nR/64)^{-d}\sum_{y \in \Bn_{R/64}(y_0)} \dashint_{(2(R/32)^{\alpha}-(R/64)^{\alpha},2(R/32)^{\alpha})} u(s,y) \d s\\
&= c_1 (nR/64)^{-d}\sum_{y \in \Bn_{R/64}(y_0)} \dashint_{((R/32)^{\alpha}-(R/64)^{\alpha},(R/32)^{\alpha})} P^{(n)}_{s}\mathbbm{1}_{\Bn_{R/16}(y_0)}(y) \d s\\
&= c_1 n^{-d}\sum_{z \in \Bn_{R/16}(y_0)} \dashint_{((R/32)^{\alpha}-(R/64)^{\alpha},(R/32)^{\alpha})} (nR/64)^{-d}\sum_{y \in \Bn_{R/64}(y_0)}  p^{(n)}_{s}(y,z) \d s,
\end{align*}
where $c_1 > 0$. Note that $(s,y) \mapsto p^{(n)}_s(y,z)$ solves $\partial_t u - L^{(n)}u = 0$ in $(0,\infty)\times \nZd$. Therefore, the weak parabolic Harnack inequality for $L^{(n)}$ is applicable in the time-space cylinder $((R/32)^{\alpha} - (R/64)^{\alpha} , (R/32)^{\alpha} - (R/64)^{\alpha} + 2R^{\alpha}) \times \Bn_{2R}(y_0)$ (see \autoref{thm:PHI}) after enlarging the domain of integration (respectively summation). Then, we obtain by setting $t_0 = c R^{\alpha}$, where $c := 2 + 32^{-\alpha} -64^{-\alpha}-2^{-1-\alpha} > 0$:
\begin{align*}
1 &\le c_2 n^{-d}\sum_{z \in \Bn_{R/16}(y_0)} \dashint_{((R/32)^{\alpha}-(R/64)^{\alpha},(R/32)^{\alpha}-(R/64)^{\alpha}+(R/2)^{\alpha})} (nR/2)^{-d}\sum_{y \in \Bn_{R/2}(y_0)}  p^{(n)}_{s}(y,z) \d s\\
&\le c_3 n^{-d}\sum_{z \in \Bn_{R/16}(y_0)} \inf_{(s,y) \in ((R/32)^{\alpha}-(R/64)^{\alpha} + 2R^{\alpha} - (R/2)^{\alpha},(R/32)^{\alpha}-(R/64)^{\alpha} + 2R^{\alpha}) \times \Bn_{R/2}(y_0)} p^{(n)}_s(y,z) \\
&\le c_3 \inf_{(s,y) \in ((R/32)^{\alpha}-(R/64)^{\alpha} + 2R^{\alpha} - (R/2)^{\alpha},(R/32)^{\alpha}-(R/64)^{\alpha} + 2R^{\alpha}) \times \Bn_{R/2}(y_0)} P^{(n)}_{s} \mathbbm{1}_{\Bn_{R/16}(y_0)} (y)\\
&\le c_3 \inf_{y \in \Bn_{R/2}(y_0)} P^{(n)}_{t_0} \mathbbm{1}_{\Bn_{R/16}(y_0)} (y),
\end{align*}
where $c_2,c_3 > 0$ and we used that $t_0 \in ((R/32)^{\alpha}-(R/64)^{\alpha} + 2R^{\alpha} - (R/2)^{\alpha},(R/32)^{\alpha}-(R/64)^{\alpha} + 2R^{\alpha})$ by definition. We have proved that \eqref{eq:NDLHKB} holds true with $\eps = c_3^{-1}$.

Next, we deduce \eqref{eq:ETE2}: Let now $x_0 \in \nZd$ be arbitrary and $\frac{32\sigma}{n} < R \le 1$. Let $y_0 \in \nZd$ be such that $y_0 \in \Bn_{5R/16}(x_0) \setminus \Bn_{4R/16}(x_0)$. Then by \eqref{eq:NDLHKB} and Markovianity of $(P^{(n)}_t)$ it holds for every $x \in \Bn_{R/2}(y_0)$:
\begin{align*}
1 - P^{(n)}_{t_0} \mathbbm{1}_{\Bn_{R/8}(x_0)}(x) &\ge P^{(n)}_{t_0} \mathbbm{1}_{\Bn_{R/16}(y_0)}(x) \ge \eps.
\end{align*}
Note that $\Bn_{R/8}(x_0) \subset \Bn_{R/2}(y_0)$ by construction. Therefore, by rearranging the above inequality and applying \eqref{eq:ressgmon}, it follows that for every $x \in \Bn_{R/8}(x_0)$:
\begin{align*}
P^{\Bn_{R/8}(x_0)}_{t_0} \mathbbm{1}(x) \le P^{(n)}_{t_0} \mathbbm{1}_{\Bn_{R/8}(x_0)}(x) \le 1 - \eps.
\end{align*}
Using semigroup property and Markovianity of $(P^{\Bn_{R/8}(x_0)}_t)$, we deduce that for every $k \in \N_0$:
\begin{align}
\label{eq:Ptlarget}
P^{\Bn_{R/8}(x_0)}_{s} \mathbbm{1}(x) \le (1-\eps)^k, ~~ s \in [kt_0,kt_0+t_0),
\end{align}
and therefore we obtain that for every $x \in \Bn_{R/8}(x_0)$
\begin{align*}
G^{\Bn_{R/8}(x_0)} \mathbbm{1}(x) = \int_0^{\infty} P^{\Bn_{R/8}(x_0)}_{s} \mathbbm{1}(x) \d s \le c_4 \sum_{k=0}^{\infty} (1-\eps)^k t_0 \le cR^{\alpha},
\end{align*}
where $c_4 > 0$, as desired for \eqref{eq:ETE2}.

We continue with the proof of the second estimate \eqref{eq:ETE1}.
Given $t > 0$ and $\eps > 0$, we define $u(x) = \int_0^t P^{\Bn_{R/8}(x_0)}_s \mathbbm{1}(x) \d s + \eps$ which is an approximation of $G^{\Bn_{R/8}(x_0)}$. It holds that $u > \eps$ and $u$ is a weak solution to $-L^{(n)}u = 1 - P_t^{\Bn_{R/8}(x_0)}\mathbbm{1} \ge 0$ in $\Bn_{R/8}$.  By applying the weak elliptic Harnack inequality (\eqref{eq:EHI}) to $u$, as well as Jensen's inequality, we obtain that
\begin{align}
\label{eq:ETElhelp1}
\inf_{\Bn_{R/32}(x_0)} u \ge c_5 \left(\frac{nR}{32}\right)^{-d} \sum_{x \in \Bn_{R/32}(x_0)} u(x) \ge c_6 (nR)^{d} \left(\sum_{x \in \Bn_{R/32}(x_0)} u^{-1}(x) \right)^{-1},
\end{align}
where $c_5,c_6 > 0$ are constants. Next, we apply \autoref{lemma:logu} and obtain
\begin{align}
\label{eq:ETElhelp2}
\begin{split}
n^{-d} \sum_{x \in \Bn_{R/32}(x_0)} u^{-1}(x) &\le <\mathbbm{1}_{\Bn_{R/8}(x_0)},\tau^2 u^{-1}>\\
&\vspace{-0.6cm}\le \cEn(u,\tau^2 u^{-1}) + <P_t^{\Bn_{R/8}(x_0)} \mathbbm{1} , \tau^2 u^{-1}>\\
&\le c_7 \mu^{(n)} (\Bn_R) R^{-\alpha} + <P_t^{\Bn_{R/8}(x_0)} \mathbbm{1} , \tau^2 u^{-1}>,
\end{split}
\end{align}
where $c_7 > 0$ and $\tau$ is a cut-off function with $\supp(\tau) = \Bn_{R/8}(x_0)$, $\tau \equiv 1$ in $\Bn_{R/32}(x_0)$ and\\ $\max_{i = 1,...,d} \Vert\nabla^{(n)}_i \tau\Vert_{L^{\infty}(\nZd)} \le 2(3R/32)^{-1}$. By combining \eqref{eq:ETElhelp1} and \eqref{eq:ETElhelp2}, we obtain
\begin{align*}
c_8\left(R^{-\alpha} + R^{-d}  <P_t^{\Bn_{R/8}(x_0)} \mathbbm{1} , \tau^2 u^{-1}> \right)^{-1} \le c_6 (nR)^{d} \left(\sum_{x \in \Bn_{R/32}(x_0)} u^{-1}(x) \right)^{-1} \le \inf_{\Bn_{R/32}(x_0)} u
\end{align*}
for some $c_8 > 0$. Finally, note that by \eqref{eq:Ptlarget} and \eqref{eq:ETE2}, the left hand side converges to $c_8 R^{-\alpha}$, as $t \nearrow \infty$. Thus, \eqref{eq:ETE1} follows by taking the limit $\eps \searrow 0$.
\end{proof}

\begin{remark}
Let us point out that the proof of \eqref{eq:ETE1} in \cite{GHH18} Lemma 4.1 is not applicable in our setup since it does not allow for bounded range Markov chains.
\end{remark}

\begin{remark}
One can prove the following near diagonal lower heat kernel bound: There exists $c > 0$ such that for every $c \left( \frac{\theta}{n} \right)^{\alpha} < t \le c$ and $x,y \in \nZd$ with $\vert x-y \vert \le \frac{1}{64 c^{1/\alpha}} t^{1/\alpha} $ it holds 
\begin{align}
p^{(n)}_t(x,y) \ge c t^{-d/\alpha}.
\end{align}
This follows from running a similar argument as in the proof of \eqref{eq:NDLHKB} involving the weak parabolic Harnack inequality for $\widehat{L}^{(n)}$.
\end{remark}

The next result establishes the survival estimate for $(X^{(n)})$.
Its proof is based on the parabolic maximum principle \autoref{prop:pmp} and \autoref{lemma:ETE} and uses the ideas from \cite{GHH18} Lemma 5.6 and Theorem 7.2.1 in \cite{Bos20}.

\begin{lemma}[survival estimate]
\label{lemma:SE}
Assume that \eqref{K1}, \eqref{K2}, \eqref{Poinc}, \eqref{Sob}, \eqref{CTail} hold true for some $\alpha \in (0,2]$, $\sigma > 0$ and $\theta \in (\frac{d}{\alpha},\infty]$. Then there exists $\eps,\delta \in (0,1)$ such that for every $n \in \N$, $\frac{32\sigma}{n} < R \le 1$, $0 < t \le (\delta R)^{\alpha}$, $x_0 \in \nZd$ it holds
\begin{align}
\label{eq:SE}
\inf_{\Bn_{R/32}(x_0)} P^{\Bn_{R/8}(x_0)}_t \mathbbm{1}_{\Bn_{R/8}(x_0)} \ge \eps.
\end{align}
\end{lemma}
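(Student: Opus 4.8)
The plan is to derive the survival estimate \eqref{eq:SE} from the exit time bounds in \autoref{lemma:ETE} together with the parabolic maximum principle \autoref{prop:pmp}, following the strategy of \cite{GHH18} Lemma 5.6 and \cite{Bos20} Theorem 7.2.1. Fix $x_0 \in \nZd$ and $\frac{32\sigma}{n} < R \le 1$, and abbreviate $B' = \Bn_{R/8}(x_0)$, $B'' = \Bn_{R/32}(x_0)$. The starting point is the identity $G^{B'}\mathbbm{1} = \EE^\cdot \tau_{B'}$, which by \eqref{eq:ETE2} and \eqref{eq:ETE1} satisfies $c_2 R^{\alpha} \le G^{B'}\mathbbm{1}(x) \le c_1 R^{\alpha}$ for $x \in B''$, while $G^{B'}\mathbbm{1}(x) \le c_1 R^{\alpha}$ on all of $B'$.

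First I would write, for fixed $t > 0$ and $x \in B''$,
\begin{align*}
\EE^x \tau_{B'} = \EE^x\left[\int_0^{\infty} \mathbbm{1}_{\{s < \tau_{B'}\}} \d s\right] \le t + \EE^x\left[\int_t^{\infty} \mathbbm{1}_{\{s < \tau_{B'}\}} \d s\right],
\end{align*}
and then apply the Markov property at time $t$ to the second term: on $\{t < \tau_{B'}\}$ one has $\Xn_t \in B'$, so
\begin{align*}
\EE^x\left[\int_t^{\infty} \mathbbm{1}_{\{s < \tau_{B'}\}} \d s\right] = \EE^x\left[\mathbbm{1}_{\{t < \tau_{B'}\}} \EE^{\Xn_t}\tau_{B'}\right] \le \left(\sup_{y \in B'} \EE^y \tau_{B'}\right) P^{B'}_t \mathbbm{1}(x) \le c_1 R^{\alpha}\, P^{B'}_t \mathbbm{1}(x).
\end{align*}
Combining with the lower bound on $\EE^x \tau_{B'}$ gives $c_2 R^{\alpha} \le t + c_1 R^{\alpha} P^{B'}_t \mathbbm{1}(x)$, hence $P^{B'}_t \mathbbm{1}(x) \ge \frac{c_2 R^{\alpha} - t}{c_1 R^{\alpha}}$. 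Choosing $\delta \in (0,1)$ with $\delta^{\alpha} \le c_2/2$ forces $t \le (\delta R)^{\alpha} \le \frac{c_2}{2} R^{\alpha}$, so that $P^{B'}_t \mathbbm{1}(x) \ge \frac{c_2}{2c_1} =: \eps$ uniformly for $x \in B''$.

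It remains to pass from $P^{B'}_t \mathbbm{1}$ to $P^{B'}_t \mathbbm{1}_{B'}$. Since the killed process $X^{B'}$ lives in $B'$ up to $\tau_{B'}$, one has $P^{B'}_t \mathbbm{1}(x) = \EE^x[\mathbbm{1}_{\{t < \tau_{B'}\}}] = \EE^x[\mathbbm{1}_{\{t < \tau_{B'}\}} \mathbbm{1}_{B'}(\Xn_t)] = P^{B'}_t \mathbbm{1}_{B'}(x)$, so the two quantities actually coincide and the bound \eqref{eq:SE} follows. If one prefers to argue purely analytically without invoking the probabilistic representation explicitly, the equality $P^{B'}_t f = P^{B'}_t (f\mathbbm{1}_{B'})$ for the restricted semigroup is immediate from its construction on $L^2(B')$ as recalled in \autoref{sec:prelim}; alternatively the monotonicity of restricted semigroups from \eqref{eq:ressgmon} together with $\mathbbm{1}_{B'} \le \mathbbm{1}$ and the support property of $P^{B'}_t$ gives the needed comparison. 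The main obstacle is the correct handling of the Markov-property step in the nonsymmetric setting: one must be careful that the split at time $t$ uses the forward process $\Xn$ (for which the Markov property does hold) and not the dual, and that the supremum $\sup_{y \in B'} \EE^y \tau_{B'}$ is genuinely controlled — here one uses that \eqref{eq:ETE2} is proved for the larger ball $\Bn_{R/8}$ with the bound valid at all points of that ball, which is exactly what \autoref{lemma:ETE} provides.
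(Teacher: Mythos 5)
Your proof is correct, and it reaches the same intermediate inequality as the paper but by a genuinely different route. The paper's proof establishes the bound
\begin{align*}
P^{B'}_t\mathbbm{1}_{B'}(x) \ge \frac{G^{B'}\mathbbm{1}(x) - t}{\Vert G^{B'}\mathbbm{1}\Vert_{L^{\infty}(B')}}
\end{align*}
analytically, by applying the weak parabolic maximum principle (\autoref{prop:pmp}) to the function $w(t,x) = u(x) - t\phi(x) - \Vert u\Vert_{L^\infty}P^{B'}_t\mathbbm{1}_{B'}(x)$, where $u$ approximates $G^{B'}\mathbbm{1}$; the remaining steps (combining with \autoref{lemma:ETE}, choosing $\delta^{\alpha} \le c_2/2$) coincide with yours. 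Your argument establishes the same inequality probabilistically, via the decomposition $\EE^x\tau_{B'} \le t + \EE^x[(\tau_{B'}-t)_+]$ and the Markov property applied to the second term. The probabilistic route is shorter and entirely standard; the key observation you make correctly is that only the forward Markov property of the Hunt process $\Xn$ is used, which remains valid in the nonsymmetric setting — the issue the paper warns about (the dual semigroup not being Markovian) does not arise in your computation. The paper adopts the analytic route deliberately: it keeps the entire tightness section self-contained at the level of the bilinear forms and their weak solutions, without invoking the probabilistic structure, which is emphasized as a design choice ("the proofs of all results in this section are purely analytic"). Both approaches buy the same constants, so your proof is a fully valid substitute; the analytic one is more robust to settings where the probabilistic representation is inconvenient, while yours is more direct and transparent here. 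Your final reduction from $P^{B'}_t\mathbbm{1}$ to $P^{B'}_t\mathbbm{1}_{B'}$ is also fine: the two agree because $\PP^x(\tau_{B'}=t)=0$ for each fixed $t$ (the exit time of a continuous-time chain with bounded jump rates has a density), or, as you note, by the very construction of the restricted semigroup on $L^2(B')$.
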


\begin{proof}
The goal is to prove that for every $(t,x) \in (0,\infty) \times \Bn_{R/8}(x_0)$:
\begin{align}
\label{eq:SEhelp1}
P^{\Bn_{R/8}(x_0)}_t\mathbbm{1}_{\Bn_{R/8}(x_0)}(x) \ge \frac{G^{\Bn_{R/8}(x_0)}\mathbbm{1}(x) - t}{\Vert G^{\Bn_{R/8}(x_0)} \mathbbm{1}\Vert_{L^{\infty}(\Bn_{R/8}(x_0))}}.
\end{align}
Combining \eqref{eq:SEhelp1} with \autoref{lemma:ETE}, we obtain that for every $(t,x) \in (0,\infty) \times \Bn_{R/32}(x_0)$:
\begin{align*}
P^{\Bn_{R/8}(x_0)}_t\mathbbm{1}_{\Bn_{R/8}(x_0)}(x) \ge \frac{c_2R^{\alpha} - t}{c_1R^{\alpha}}
\end{align*}
for some constants $0 < c_1 < c_2$. By choosing $\delta > 0$ such that $\delta^{\alpha} < \frac{c_2}{2}$, we obtain that for every $0 < t \le (\delta R)^{\alpha}$ and $x \in \Bn_{R/32}(x_0)$ it holds:
\begin{align*}
P^{\Bn_{R/8}(x_0)}_t\mathbbm{1}_{\Bn_{R/8}(x_0)}(x) \ge \frac{c_2R^{\alpha} - c_2 R^{\alpha}/2}{c_1R^{\alpha}} \ge \eps
\end{align*}
for $\eps = c_2/(2c_1)$. As this proves the desired result, it remains to show \eqref{eq:SEhelp1}.\\
The main ingredient in the proof of \eqref{eq:SEhelp1} is the parabolic maximum principle \autoref{prop:pmp}. We will apply it on $(0,T)\times \Bn_{R/8}(x_0)$ for some $T > 0$ to the function $w$ defined by
\begin{align*}
w(t,x):= u(x) - \phi(x)t-\Vert u \Vert_{L^{\infty}(\Bn_{R/8}(x_0))}P^{\Bn_{R/8}(x_0)}_t \mathbbm{1}_{\Bn_{R/8}(x_0)}(x),
\end{align*}
where $u = \int_0^s P_t^{\Bn_{R/8}(x_0)} \mathbbm{1} \d t$ for some fixed $s > 0$, and $\phi \in L^2(\nZd)$ is chosen such that $0 \le \phi \le 1$, $\phi \equiv 1$ on $\Bn_{R/8}(x_0)$ and $\supp(\phi) \subset \Bn_{R/4}(x_0)$.\\
One immediately sees that $w_+(t) \in L^2_c(\Bn_{R/8}(x_0))$ for every $t > 0$, and that $w_+(t) \to 0$ in $L^2(\nZd)$ as $t \searrow 0$. Furthermore, $w$ is a subsolution to $\partial_t u - L^{(n)}u = 0$ in $(0,T)\times \Bn_{R/8}(x_0)$ since for every nonnegative $\psi \in L^2_c(\Bn_{R/8}(x_0))$ and $t > 0$ it holds:
\begin{align*}
<\partial_t w(t),\psi> &+ \cEn(w(t),\psi) = -<\phi + \Vert u \Vert_{L^{\infty}(\Bn_{R/8}(x_0))}\partial_t P^{\Bn_{R/8}(x_0)}_t \mathbbm{1}_{\Bn_{R/8}(x_0)},\psi> + \cEn(w(t),\psi)\\
&= -<\phi,\psi> + \cEn(u,\psi)-t \cEn(\phi,\psi)\\
&- \Vert u \Vert_{L^{\infty}(\Bn_{R/8}(x_0))} \left(<\partial_t P^{\Bn_{R/8}(x_0)}_t \mathbbm{1}_{\Bn_{R/8}(x_0)},\psi> + \cEn(P^{\Bn_{R/8}(x_0)}_t \mathbbm{1}_{\Bn_{R/8}(x_0)},\psi) \right)\\
&\le <\mathbbm{1}_{\Bn_{R/8}(x_0)}-\phi,\psi>-t \cEn(\phi,\psi),
\end{align*}
where we used that $(t,x) \mapsto P^{\Bn_{R/8}(x_0)}_t \mathbbm{1}_{\Bn_{R/8}(x_0)}(x)$ solves $\partial_t u - L^{(n)}u = 0$ in $(0,\infty) \times \Bn_{R/8}(x_0)$ and that $u$ solves $-L^{(n)}u = \mathbbm{1} - P_t^{\Bn_{R/8}(x_0)} \mathbbm{1}$ in $\Bn_{R/8}(x_0)$. 
By the definition of $\phi,\psi$, note that $<\mathbbm{1}_{\Bn_{R/8}(x_0)}-\phi,\psi> \le 0$ and 
\begin{align*}
-\cE^{(n)}(\phi,\psi) = -2n^{\alpha-d} \sum_{x \in \Bn_{R/8}(x_0)}\sum_{y \in \nZd} (1-\phi(y))\psi(x)\Cn(x,y) \le 0.
\end{align*}
Therefore, $w$ is a weak subsolution to $\partial_t u - L^{(n)} u = 0$ in $(0,T)\times \Bn_{R/8}(x_0)$ and the weak the parabolic maximum principle is applicable to $w$. Since $T > 0$ was arbitrary, it follows that $w \le 0$ in $(0,\infty)\times \Bn_{R/8}(x_0)$ and therefore \eqref{eq:SEhelp1} holds true after taking the limit $s \nearrow \infty$. This concludes the proof.
\end{proof}

\begin{remark}
In probabilistic terms \eqref{eq:SE} yields the existence of $\eps,\delta \in (0,1)$ such that for every $\frac{\sigma}{4n} < R \le \frac{1}{8}$ and every $x_0 \in \nZd$:
\begin{align}
\label{eq:stochSE}
\PP^x\left( \tau_{\Bn_{R}(x_0)} \le (\delta R)^{\alpha} \right) \le 1-\eps, ~~\forall x \in \Bn_{R/4}(x_0).
\end{align}
As a consequence, we have that for every $x \in \nZd$:
\begin{align*}
\PP^x\left(\sup_{t \le (\delta R)^{\alpha}} \vert X^{(n)}_t -x \vert > R\right) \le 1-\eps.
\end{align*}
This estimate is weaker than \autoref{thm:tightnessprep} since $\eps > 0$ cannot be arbitrary in \eqref{eq:stochSE}.
\end{remark}
 
In order to establish \autoref{thm:tightnessprep}, we iterate statements of the form \eqref{eq:SE} (resp. \eqref{eq:stochSE}) using the following lemma. We adapt the proof of Lemma 7.3.1 in, \cite{Bos20}, which is based on Lemma 4.6 in \cite{GHH17}. 
A similar iteration of survival estimates is carried out for the proof of Theorem 3.1 in \cite{GHL14} and Theorem 5.7 in \cite{GrHu14}.

\begin{lemma}[iteration lemma]
\label{lemma:SEit}
Assume that \eqref{K1}, \eqref{K2}, \eqref{Poinc}, \eqref{Sob}, \eqref{CTail} hold true for some $\alpha \in (0,2]$, $\sigma > 0$ and $\theta \in (\frac{d}{\alpha},\infty]$. Let $H > 0$, $C \in (1,2)$, $n \in \N$ and $\frac{\sigma}{n} < R \le \frac{1}{C}$ with $\frac{\sigma}{4n} < \frac{(C-1)R}{3} \le \frac{1}{8}$ and $x_0 \in \nZd$. Assume that for some $\gamma_0 \in (0,1)$ and some open set $M \subset \Bn_R(x_0)$:
\begin{align}
\label{eq:SEitass}
1 - P^{\Bn_R(x_0)}_t \mathbbm{1}(x) \le H,\qquad  \forall x \in M,~~ 0 < t \le (\gamma_0 R)^{\alpha}.
\end{align}
Then there exists $\kappa \in (0,1)$, independent of $H,R,C,n,\gamma_0$ and $\gamma \in (0,1)$, independent of $R,n$:
\begin{align*}
1 - P^{\Bn_{CR}(x_0)}_t \mathbbm{1}(x) \le \kappa H, \forall x \in M, ~~ 0 < t \le (\gamma R)^{\alpha}.
\end{align*}
\end{lemma}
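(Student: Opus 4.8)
The plan is to run one step of a stopping-time iteration, comparing survival of $\Xn$ in $B_2:=\Bn_{CR}(x_0)$ with survival in $B_1:=\Bn_R(x_0)$. Set $r:=\tfrac{(C-1)R}{3}$, so that $B_2=\Bn_{R+3r}(x_0)$ and, by the hypotheses on the parameters, $\tfrac{\sigma}{4n}<r\le\tfrac18$ --- exactly the range in which \autoref{lemma:SE} (in the form \eqref{eq:stochSE}) applies on balls of radius $r$. Writing $\tau_1:=\tau^{(n)}_{B_1}$ and using $B_1\subset B_2$, a decomposition at $\tau_1$ via the Markov property of $\Xn$ gives, for $x\in B_1$ and $t>0$,
\begin{align*}
1-P^{B_2}_t\1(x)=\EE^x\!\left[\1_{\{\tau_1\le t\}}\bigl(1-P^{B_2}_{t-\tau_1}\1(\Xn_{\tau_1})\bigr)\right],
\end{align*}
because on $\{\tau_1\le t\}\cap\{t<\tau^{(n)}_{B_2}\}$ one has $\tau_1<\tau^{(n)}_{B_2}$, hence $\Xn_{\tau_1}\in B_2$.

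First I would split this expectation according to whether $\Xn_{\tau_1}\in\Bn_{R+2r}(x_0)$ or not. On the ``good'' event one has $\Xn_{\tau_1}\notin B_1$ but $|\Xn_{\tau_1}-x_0|<R+2r$, so $\Bn_r(\Xn_{\tau_1})\subset B_2$; then the monotonicity of restricted semigroups \eqref{eq:ressgmon}, together with \eqref{eq:stochSE} applied on $\Bn_r(\Xn_{\tau_1})$ (whose center trivially lies in $\Bn_{r/4}(\Xn_{\tau_1})$), gives $1-P^{B_2}_{t-\tau_1}\1(\Xn_{\tau_1})\le 1-\eps$ provided $t\le(\delta r)^\alpha$. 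On the complementary event I would bound the bracket crudely by $1$. Using also $\PP^x(\tau_1\le t)=1-P^{B_1}_t\1(x)\le H$ from the hypothesis \eqref{eq:SEitass} (valid for $x\in M$, $t\le(\gamma_0R)^\alpha$), this yields, for $x\in M$ and $0<t\le\bigl(\min(\gamma_0,\tfrac{\delta(C-1)}{3})\,R\bigr)^\alpha$,
\begin{align*}
1-P^{B_2}_t\1(x)\le(1-\eps)\,\PP^x(\tau_1\le t)+\eps\,p_{\mathrm{far}}\le(1-\eps)H+\eps\,p_{\mathrm{far}},\qquad p_{\mathrm{far}}:=\PP^x\bigl(\tau_1\le t,\ \Xn_{\tau_1}\notin\Bn_{R+2r}(x_0)\bigr).
\end{align*}

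The hard part is then to absorb $\eps\,p_{\mathrm{far}}$ so that the resulting constant $\kappa$ is $<1$ and depends neither on $C$ nor on $H$; this is the step I expect to be the main obstacle. The far event forces $\Xn$ to leave $B_1$ by a single jump of length $\ge 2r$, so the L\'evy system formula \autoref{lemma:LSformula} bounds
\begin{align*}
p_{\mathrm{far}}\le\EE^x\!\left[\int_0^{t\wedge\tau_1}n^\alpha\sum_{y\,:\,|y-\Xn_s|\ge 2r}\Cn(\Xn_s,y)\,\d s\right].
\end{align*}
The symmetric part of the inner sum is $\le c\,(2r)^{-\alpha}$ by \eqref{CTail}; the antisymmetric part must be controlled \emph{uniformly in $n$}, and here one uses Cauchy--Schwarz against $J^{(n)}$ (which itself obeys \eqref{CTail}), the $L^\theta(\nZd)$-bound of $W:=n^\alpha\sum_y|\Cna(\cdot,y)|^2/J^{(n)}(\cdot,y)$ from \eqref{K1}, and the standard on-diagonal heat kernel estimate (a consequence of \eqref{Sob}, with $\theta>d/\alpha$) to arrive at $\EE^x\bigl[\int_0^t\1_{\{s<\tau_1\}}W(\Xn_s)\,\d s\bigr]\le C_\theta\,A\,t^{1-d/(\alpha\theta)}$. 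Hence $p_{\mathrm{far}}\le C\bigl(t\,((C-1)R)^{-\alpha}+t^{1-d/(\alpha\theta)}\bigr)$, which for $t\le(\gamma R)^\alpha$ is controlled by a quantity depending only on $\gamma$ and $C$ and tending to $0$ as $\gamma\downarrow 0$. Combining this smallness with the trivial bound $p_{\mathrm{far}}\le\PP^x(\tau_1\le t)\le H$ and shrinking $\gamma$ (still independent of $R$ and $n$) yields $\eps\,p_{\mathrm{far}}\le\tfrac{\eps}{2}H$, and therefore $1-P^{B_2}_t\1(x)\le\kappa H$ with $\kappa:=1-\tfrac{\eps}{2}\in(0,1)$, independent of $H,R,C,n,\gamma_0$. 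The whole scheme is the nonsymmetric counterpart of the iteration in \cite{Bos20} (Lemma 7.3.1) and \cite{GHH17}, the genuinely new ingredient being precisely this $n$-uniform control of the long-range antisymmetric jump intensity, where the subcriticality built into \eqref{K1} via $\theta>d/\alpha$ is indispensable.
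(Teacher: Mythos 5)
Your proof is correct in its overall architecture, but it takes a genuinely different route from the paper. Where the paper argues analytically — it defines the barrier
\begin{align*}
u(t,x) = P^{\Bn_R(x_0)}_t \1(x) - \tfrac{1}{1-\eps}\bigl(P^{\Bn_{CR}(x_0)}_t\1(x)-\eps\psi(x)\bigr) - \beta t \psi(x),
\end{align*}
verifies that it is a subsolution (using the survival estimate to show $P^{\Bn_{CR}(x_0)}_t\1 \ge \eps$ on the enlarged ball, and \eqref{CTail} to control $\cEn(\psi,\phi)$ and pick $\beta$), and applies the parabolic maximum principle of \autoref{prop:pmp} — you argue probabilistically by conditioning on $\tau_1$ and splitting according to where $\Xn_{\tau_1}$ lands. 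Your treatment of the ``good'' event via \eqref{eq:stochSE} applied on $\Bn_r(\Xn_{\tau_1})\subset\Bn_{CR}(x_0)$ plus \eqref{eq:ressgmon} is clean and parallels the paper's covering argument, and the final arithmetic $1-P^{B_2}_t\1 \le (1-\eps)H + \eps p_{\mathrm{far}} \le (1-\tfrac{\eps}{2})H$ after choosing $\gamma$ small (depending on $H$ and $C$, which the statement permits) matches the paper's $\kappa=1-\tfrac{\eps}{2}$.

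The step you flag as ``the main obstacle'', however, is substantially over-engineered, and this is where you took a wrong turn. Because the conductances are nonnegative, $\Cn(x,y)\ge 0$ and $\Cn(y,x)\ge 0$ imply $|\Cna(x,y)|\le\Cns(x,y)$ pointwise, hence $\Cn(x,y)\le 2\Cns(x,y)$. Consequently
\begin{align*}
p_{\mathrm{far}} \le \EE^x\!\left[\int_0^{t\wedge\tau_1} n^\alpha \sum_{|y-\Xn_s|\ge 2r}\Cn(\Xn_s,y)\,\d s\right] \le 2t\,\sup_z\, n^\alpha\!\sum_{|y-z|\ge 2r}\Cns(z,y) \le 2c\,t\,(2r)^{-\alpha}
\end{align*}
by \eqref{CTail}, with no need at all for Cauchy--Schwarz against $J^{(n)}$, the $L^\theta$ bound from \eqref{K1}, or any heat-kernel input. (The paper itself relies on this same pointwise domination when it bounds $\cEn(\psi,\phi)$ by \eqref{CTail} even though \eqref{CTail} is phrased for $\Cns$.) Your alternative via an on-diagonal bound $p^{(n)}_s(x,y)\lesssim s^{-d/\alpha}$ is not available off the shelf: the paper deliberately eschews heat-kernel upper bounds in this nonsymmetric setting (this is a stated methodological choice), so you would have to establish a uniform-in-$n$ Nash-type estimate separately. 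Your claimed exponent is also slightly off — after Cauchy--Schwarz the relevant integral is $\EE^x\!\int_0^t W(\Xn_s)^{1/2}\,\d s$, whose bound via H\"older and an on-diagonal estimate would carry exponent $1-\tfrac{d}{2\alpha\theta}$ rather than $1-\tfrac{d}{\alpha\theta}$. None of this is fatal, but it introduces unnecessary machinery for a step that is actually trivial given the positivity of $\Cn$. Replacing that paragraph by the one-line pointwise bound gives a tidy probabilistic proof that runs parallel to (and is arguably more transparent than) the paper's maximum-principle argument.
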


\begin{proof}
Let $\eps,\delta_0$ be the $\eps,\delta$ from \autoref{lemma:SE} and $\delta = \min(\gamma_0,\delta_0)$. Let $\psi \in L^2(\nZd)$ be such that $0 \le \psi \le 1$, $\psi \equiv 1$ on $\Bn_{R + \frac{C-1}{3}R}(x_0)$ and $\supp(\psi) \subset \Bn_{R + \frac{2(C-1)}{3}R}(x_0)$. Define for $\beta > 0$ the function
\begin{align*}
u(t,x) = P^{\Bn_R(x_0)}_t \mathbbm{1}(x) - \frac{P^{\Bn_{CR}(x_0)}_t\mathbbm{1}(x)-\eps\psi(x)}{1-\eps} - \beta t \psi(x).
\end{align*}
We prove that the parabolic maximum principle \autoref{prop:pmp} is applicable to $u$ in $(0,T)\times\Bn_R(x_0)$, where $T = (\delta \frac{C-1}{3}R)^{\alpha}$, if $\beta > 0$ is chosen suitably.\\
First, by application of the survival estimate (see \autoref{lemma:SE}) to balls of the form $\Bn_{\frac{C-1}{3}R}(y)$ for $y \in \Bn_{R + \frac{2(C-1)}{3}R}(x_0)$, we obtain that
\begin{align*}
P^{\Bn_{CR}(x_0)}_t \mathbbm{1}(x) \ge \eps, \qquad x \in \Bn_{R + \frac{2(C-1)}{3}R}(x_0), ~~ 0 < t \le T.
\end{align*}
This is a simple consequence of the fact that $\Bn_{CR}(x_0)$ can be covered by the family of balls $\Bn_{\frac{C-1}{3}R}(y)$ as above and since $P_t^{\Bn_{CR}(x_0)} \mathbbm{1}(x) \ge P_t^{\Bn_{\frac{C-1}{3}R}(y)} \mathbbm{1}(x)$ due to \eqref{eq:ressgmon}. By definition of $\psi$, we conclude that for every $t \le T $ on $\Bn_R$ it holds that $P^{\Bn_{CR}(x_0)}_t \mathbbm{1} - \eps\psi \ge 0$. Consequently, we have that $u_+(t) \in L^2_c(\Bn_R(x_0))$ for every $0 < t \le T$.\\
Further, one easily sees that $u_+(t) \to 0$ in $L^2(\Bn_R(x_0))$, as $t \searrow 0$, as a consequence of the strong continuity of $(P^{\Bn_R(x_0)}_t)$ and $(P^{\Bn_{CR}(x_0)}_t)$.\\
It remains to check that $u$ is a subsolution to $\partial_t u - L^{(n)}u = 0$ in $(0,T) \times \Bn_R(x_0)$. We take an arbitrary function $\phi \in L^2_c(\Bn_R(x_0))$ with $\phi \ge 0$ and compute for $t \in (0,T)$:
\begin{align*}
<\partial_t u(t),\phi> + \cEn(u(t),\phi) &= \frac{\eps}{1-\eps}\cEn(\psi,\phi) - \beta<\psi,\phi> - t\beta\cEn(\psi,\phi)\\
&\le \frac{\eps}{1-\eps}\cEn(\psi,\phi) - \beta<1,\phi>,
\end{align*}
where we used that
\begin{align*}
\cEn(\psi,\phi) &= 2n^{\alpha-d}\sum_{x \in \nZd}\sum_{y\in\nZd} (\psi(x)-\psi(y))\phi(x)\Cn(x,y)\\
&= 2n^{\alpha-d}\sum_{x \in \Bn_R}\sum_{y\in\nZd\setminus \Bn_{R + \frac{C-1}{3}R}(x_0)} (1-\psi(y))\phi(x)\Cn(x,y) \ge 0.
\end{align*}
Next, we apply \eqref{CTail} with $r = \frac{C-1}{3}R$ and compute that
\begin{align*}
\cEn(\psi,\phi) \le 2n^{\alpha-d}\sum_{x \in \Bn_R(x_0)}\phi(x)\left(\sum_{y\in\nZd\setminus \Bn_{\frac{C-1}{3}R}(x)} \Cn(x,y)\right) \le c\left(\frac{C-1}{3}R\right)^{-\alpha}<1,\phi>,
\end{align*}
where $c > 0$ is a constant. We choose $\beta = \frac{c\eps}{1-\eps}\left(\frac{C-1}{3}R\right)^{-\alpha}$ and obtain that for $t \in (0,T)$
\begin{align*}
<\partial_t u(t),\phi> + \cEn(u(t),\phi) \le 0,
\end{align*}
as desired. Note that when $\cEn(\psi,\phi) = 0$ (i.e., for $n$ large in the bounded range-case), we can simply choose $\beta = 0$.

Next, we apply the parabolic maximum principle (\autoref{prop:pmp}), which yields that $u \le 0$ in $(0,T) \times \Bn_R(x_0)$. By using the definition of $u$, as well as \eqref{eq:SEitass}, we obtain that for every $0 < t < T = \min(T,(\delta R)^{\alpha})$ it holds:
\begin{align*}
1-H - \beta t \le P_t^{\Bn_R(x_0)} \mathbbm{1} - \beta t \le \frac{P^{\Bn_{CR}(x_0)}_t \mathbbm{1} - \eps}{1 - \eps}, ~~\forall x \in M,
\end{align*}
which is equivalent to
\begin{align*}
1 - P^{\Bn_{CR}(x_0)}_t \mathbbm{1} \le (1-\eps)(H + \beta t), ~~\forall x \in M.
\end{align*}
Finally, we note that if $0 < t < \min\left(T,\frac{H \eps}{2(1-\eps)\beta}\right)$, we obtain that
\begin{align}
\label{eq:SEithelp1}
1 - P^{\Bn_{CR}(x_0)}_t \mathbbm{1}(x) \le \kappa H, ~~\forall x \in M,
\end{align}
where $\kappa = 1 - \frac{\eps}{2}$. If $\beta = 0$, we set $\frac{H \eps}{2(1-\eps)\beta} = \infty$. Note that by the definition of $T$ and $\beta$, we can find a constant $\gamma > 0$, independent of $R$, such that \eqref{eq:SEithelp1} holds for every $0 < t \le (\gamma R)^{\alpha}$. This concludes the proof.
\end{proof}

We are finally in the position to prove \autoref{thm:tightnessprep}.

\begin{proof}[Proof of \autoref{thm:tightnessprep}]

Let $A,B \in (0,1)$ be arbitrary and $x_0 \in \nZd$. Note that it is enough to prove \autoref{thm:tightnessprep} for $A \in (0,\frac{1}{4})$ by inclusion of sets. Our goal is to find $t_0 \in (0,1)$, $N \in \N$ such that for every $0 < R \le 1$ and $n \ge N$:
\begin{align*}
1 - P^{\Bn_{AR}(x_0)}_t \mathbbm{1}(x) \le B, \qquad \forall x \in \Bn_{AR/2}(x_0),~~ 0 < t \le (t_0 R)^{\alpha}. 
\end{align*}
We set $\eta = A/2$ and, for $k \in \N$, we define $R_k = C^k \eta R$, where $C \in (1,2)$ is to be chosen later. First, we have the following trivial estimate for every $n \in \N$, $0 < R \le 1$:
\begin{align*}
1 - P^{\Bn_{R_0}(x_0)}_t \mathbbm{1}(x) \le 1, \qquad \forall x \in \Bn_{R_0}(x_0) = \Bn_{AR/2}(x_0),~~ t > 0.
\end{align*}
We observe that \autoref{lemma:SEit} yields that 
\begin{align*}
1 - P^{\Bn_{R_1}(x_0)}_t \mathbbm{1}(x) \le \kappa, \qquad  \forall x \in \Bn_{AR/2}(x_0),~~ 0 < t \le (\gamma R)^{\alpha}
\end{align*}
for some $\gamma,\kappa \in (0,1)$ in case $\frac{\sigma}{4n} \le \frac{(C-1)AR}{6}$. After having chosen $C \in (1,2)$, we will determine $N \in \N$ (depending on $R$) such that this condition is satisfied for every $n \ge N$. By iterating the above line, we obtain that for every $k \in \N$ (as long as $R_k \le \frac{1}{4}$): 
\begin{align*}
1 - P^{\Bn_{R_k}(x_0)}_t \mathbbm{1}(x) \le \kappa^k, \qquad \forall x \in \Bn_{AR/2}(x_0),~~ 0 < t \le (\gamma_k R)^{\alpha}
\end{align*}
for some $\gamma_k \in (0,1)$ that depend on $C$ but not on $R$. We want to choose $k \in \N$ and $C \in (1,2)$ such that (i) $\kappa^k \le B$ and (ii) $C^k \eta R \le AR$ hold true. By defining $k := \left\lfloor \frac{\log(A/\eta)}{\log(C)} \right\rfloor = \left\lfloor \frac{\log(2)}{\log(C)} \right\rfloor$, we guarantee (ii). Note that by definition, $k \nearrow \infty$ as $C \searrow 1$. Therefore, we can choose $C-1 > 0$ small enough, such that (i) holds, namely we choose $C = \frac{1}{2}\left(2^{\frac{\log \kappa}{\log B}} + 1\right)$. This yields
\begin{align*}
1 - P^{\Bn_{AR}(x_0)}_t \mathbbm{1}(x) \le 1 - P^{\Bn_{R_k}(x_0)}_t \mathbbm{1}(x) \le \kappa^k \le B, \qquad \forall x \in \Bn_{AR/2}(x_0),~~ 0 < t \le (\gamma_k R)^{\alpha}.
\end{align*}
Upon our definition of $C$, it is guaranteed that $\frac{\sigma}{4n} \le \frac{(C-1)AR}{6}$ is satisfied for every $n \ge N := \left\lceil \frac{3\sigma}{AR_k}\left(2^{\frac{\log \kappa}{\log B}} -1\right)^{-1}\right\rceil$. Therefore, the desired result holds true with the choice $t_0 = \gamma_k$.
\end{proof}

\section{Convergence}
\label{sec:convergence}

Given $T > 0$, $x \in \R^d$ and a sequence $(x_n) \subset \nZd$ with $x_n \to x$, our goal is to prove that - under suitable assumptions - the sequence of $\PP^{x_n}$-laws of $(X^{(n)}_t)_{t \in [0,T]}$ converges weakly, with respect to the $D([0,T];\R^d)$-topology, to a probability $\PP^x$ and to identify the limiting process $X$ by providing the associated bilinear form.

The following two theorems collect statements that outline the path towards the desired weak convergence of $(\Xn)$. First, tightness of the laws of $\Xn$ is proved and then, weak convergence of $(\Xn)$ along sub-subsequences is established. These results are contained in \autoref{thm:tightness} and follow from \autoref{cor:regest} and \autoref{thm:tightnessprep}. In order to guarantee convergence of the full sequence of laws of $(\Xn)$ we need to show that the limits in \autoref{thm:tightness} are independent of their respective subsequences. We will do so by proving that all limits correspond to the same bilinear form. In this respect, \autoref{thm:convergence} provides a suitable criterion for the desired convergence in terms of the corresponding bilinear forms.

Our proofs follow the technique that was developed in \cite{StZh97}, \cite{BaKu08}, \cite{HuKa07}, \cite{BKU10}, \cite{DeKu13}. Nevertheless, we clarify some of the arguments since \autoref{thm:tightnessprep} and \autoref{cor:regest} differ slightly from their counterparts in the aforementioned articles.

Given functions $f : \R^d \to \R$, $g : \nZd \to \R$, we define the restriction operator to $\nZd$ by $R^{(n)}f(x) = f(x)$ for every $x \in \nZd$ and the extension operator $E^{(n)}g(x) = g([x]_n)$ for every $x \in \R^d$, where $[x]_n = (\lfloor nx_i \rfloor/n)_{i=1}^d \in \nZd$.

\begin{theorem}
\label{thm:tightness}
Assume that \eqref{K1}, \eqref{K2}, \eqref{Poinc}, \eqref{Sob}, \eqref{CTail}, \eqref{CTail2} hold true for some $\alpha \in (0,2]$, $\sigma > 0$ and $\theta \in (\frac{d}{\alpha},\infty]$. Let $(x_n)_n \subset \nZd$ and $x \in \R^d$ with $x_n \to x$ and $T > 0$. Then, the $\PP^{x_n}$-laws of $(\Xn)$ are tight in $D([0,T];\R^d)$.
Moreover, for every subsequence $(n_j) \subset \N$ there exists a further subsequence $(n_{j_k}) \subset (n_j)$ such that
\begin{itemize}
\item[(i)] For each $f \in C_c(\R^d)$ and $\lambda > \lambda_0$, $(E^{(n_{j_k})}(P^{(n_{j_k})}_t R^{(n_{j_k})}(f)))_k$ and $(E^{(n_{j_k})}(U_{\lambda}^{(n_{j_k})} R^{(n_{j_k})}(f))_k$ converge uniformly on compact subsets.
\item[(ii)] Write $P_t f := \lim_{k \to \infty} E^{(n_{j_k})}(P^{(n_{j_k})}_t R^{(n_{j_k})}(f))$. Then $P_t$ is linear for every $t$ and $(P_t)$ is a semigroup on $C_c(\R^d)$, belonging to a strong Markov process on $\R^d$.
\item[(iii)] The $\PP^{x_{n_{j_k}}}$-laws of $(X^{(n_{j_k})}_t)_{t \in [0,T]}$ converge weakly, with respect to the $D([0,T];\R^d)$-topology, to a probability $\PP^x$.
\end{itemize}
\end{theorem}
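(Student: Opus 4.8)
\emph{Overall strategy.} The plan is to run the scheme of \cite{StZh97,BaKu08,HuKa07,BKU10,DeKu13}, using \autoref{thm:tightnessprep} to produce tightness and \autoref{cor:regest} to produce equicontinuity of the semigroups and resolvents, and then to extract a convergent subsequence and identify its limit as a Feller semigroup. For tightness, observe that \autoref{thm:tightnessprep} is exactly the uniform modulus-of-continuity bound required for Aldous' criterion: iterating it over successive intervals of length $(t_0R)^\alpha$ via the Markov property controls $\PP^{x}\big(\sup_{s\le t}|\Xn_s-\Xn_0|>\eta\big)$ uniformly in all $n\ge N$ and all starting points, which simultaneously gives the compact containment condition on $[0,T]$ and the Aldous condition for stopping times. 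Since $x_n\to x$, the $\PP^{x_n}$-laws of $(\Xn)$ thus form a tight family in $D([0,T];\R^d)$ for every $T>0$, by standard criteria for Skorokhod space (cf.\ the analogous arguments in \cite{HuKa07,BKU10,DeKu13}).

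\emph{Equicontinuity and extraction (i).} Fix countable dense sets $\{f_m\}\subset C_c(\R^d)$, $\{t_l\}\subset[0,\infty)$ and $\{\lambda_p\}\subset(\lambda_0,\infty)$. By Markovianity, $\|E^{(n)}P^{(n)}_tR^{(n)}f_m\|_\infty\le\|f_m\|_\infty$ and $\|E^{(n)}U^{(n)}_{\lambda_p}R^{(n)}f_m\|_\infty\le\lambda_p^{-1}\|f_m\|_\infty$, and by \eqref{eq:sgHRE}, \eqref{eq:resHRE} these functions are uniformly $\gamma$-H\"older on $\R^d$; Arzel\`a--Ascoli together with a diagonal argument yields a subsequence $(n_{j_k})$ along which $E^{(n_{j_k})}P^{(n_{j_k})}_{t_l}R^{(n_{j_k})}f_m$ and $E^{(n_{j_k})}U^{(n_{j_k})}_{\lambda_p}R^{(n_{j_k})}f_m$ converge uniformly on compacts for all $m,l,p$. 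The time bound \eqref{eq:sgHRE2} makes $t\mapsto E^{(n)}P^{(n)}_tR^{(n)}f_m$ equicontinuous in $t$ uniformly in $x$ and in $n$ (the $\sigma/n$ term being harmless in the limit), so convergence on $\{t_l\}$ upgrades to locally uniform convergence in $(t,x)$ for every $t\ge0$; finally the $L^\infty$-contraction bounds $\|P^{(n)}_t\|_{\infty\to\infty}\le1$, $\|U^{(n)}_\lambda\|_{\infty\to\infty}\le\lambda^{-1}$ let one replace $\{f_m\}$ by arbitrary $f\in C_c(\R^d)$ and $\{\lambda_p\}$ by arbitrary $\lambda>\lambda_0$. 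This proves (i).

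\emph{Identification and weak convergence (ii), (iii).} Set $P_tf:=\lim_kE^{(n_{j_k})}(P^{(n_{j_k})}_tR^{(n_{j_k})}f)$ and $U_\lambda f$ analogously. Linearity is clear, $P_tf$ is continuous, $\|P_tf\|_\infty\le\|f\|_\infty$, and $0\le f\le1$ forces $0\le P_tf\le1$; passing to the limit in $P^{(n)}_{t+s}=P^{(n)}_tP^{(n)}_s$ (using locally uniform convergence and the uniform bounds) gives the semigroup identity, while letting $t\searrow0$ in \eqref{eq:sgHRE2} gives $|P_tf(x)-f(x)|\le c\|f\|_\infty t^{\gamma/\alpha}$. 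Passing to the limit in \eqref{eq:defresolvent} and \eqref{eq:resolventsol} by dominated convergence identifies $U_\lambda f=\int_0^\infty e^{-\lambda t}P_tf\,\d t$ and shows $(U_\lambda)$ obeys the resolvent equation. Using \autoref{thm:tightnessprep} once more to estimate $P^{(n)}_tf(x)$ for $x$ far from $\supp f$, one sees $P_tf\in C_\infty(\R^d)$ and that $P_t$ is conservative; hence $(P_t)$ is (the restriction to $C_c(\R^d)$ of) a Feller semigroup, which corresponds to a strong Markov process $X$ on $\R^d$, giving (ii). For (iii), along $(n_{j_k})$ the laws $\PP^{x_{n_{j_k}}}$ are tight, so any subsequence has a weakly convergent sub-subsequence with some limit $\mathbf{Q}$; for $f_i\in C_c(\R^d)$ and $t_1<\dots<t_m$ in a co-countable set of times, the Markov property of $X^{(n)}$ and the convergence $E^{(n)}P^{(n)}_tR^{(n)}f\to P_tf$ give
\[
\mathbf{E}_{\mathbf{Q}}\Big[\prod_{i=1}^m f_i(X_{t_i})\Big]=\big(P_{t_1}\big(f_1\,P_{t_2-t_1}(f_2\cdots)\big)\big)(x),
\]
so every such limit has the finite-dimensional distributions of the $(P_t)$-Markov process started at $x$; since these, together with tightness and the c\`adl\`ag property, determine the law on $D([0,T];\R^d)$, the full sequence $\PP^{x_{n_{j_k}}}$ converges weakly to $\PP^x$.

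\emph{Main obstacle.} The technical heart of the matter is already contained in \autoref{thm:tightnessprep} and \autoref{cor:regest}; the remaining delicate point is to verify that the limiting object $(P_t)$ is genuinely the transition semigroup of a (strong) Markov process: one must check that the semigroup and strong-continuity properties survive the nonsymmetric limit, establish $P_tf\in C_\infty(\R^d)$ and conservativeness (which is exactly where the uniform exit-time estimates of \autoref{sec:tightness} re-enter), and ensure the identification of the finite-dimensional distributions in (iii) does not depend on the chosen sub-subsequence. Care is needed throughout because, unlike in the symmetric setting, the dual semigroup $\widehat{P}^{(n)}_t$ is not Markovian, so symmetry arguments are unavailable.
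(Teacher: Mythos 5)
Your overall scheme is the same as the paper's: verify Aldous' stopping-time criterion from \autoref{thm:tightnessprep}, extract convergent subsequences of semigroups/resolvents via \autoref{cor:regest} and Arzel\`a--Ascoli with a diagonal argument, extend in $t$ using \eqref{eq:sgHRE2}, and identify finite-dimensional distributions to get uniqueness of the weak limit. Parts (i)--(iii) essentially match, and your more explicit Feller-property argument in (ii) is a reasonable elaboration of the paper's appeal to \cite{HuKa07}.

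However, there is a genuine gap in your tightness argument. You claim that iterating \autoref{thm:tightnessprep} over intervals of length $(t_0R)^{\alpha}$ yields compact containment on $[0,T]$. But the constant $t_0$ produced by \autoref{thm:tightnessprep} depends on the target probability $B$: to make the failure probability per interval small, one must take $B$ small, which forces $t_0 \to 0$, so the number of intervals $k \approx T/t_0^{\alpha}$ diverges, and the union bound $kB$ need not be small. The naive iteration is therefore circular and does not on its own deliver the one-dimensional (or compact-containment) tightness that Aldous' Theorem 1 requires in addition to condition (A). The paper supplies the missing ingredient differently: condition (A) is checked directly from \autoref{thm:tightnessprep} via the strong Markov property (no iteration), and then the L\'evy system formula (\autoref{lemma:LSformula}) together with assumption \eqref{CTail2} is used to bound
\[
\PP^x\Bigl(\sup_{t\le T}\bigl|X^{(n)}_t-X^{(n)}_{t-}\bigr|>A\Bigr)\le c\,T\,A^{-\delta},
\]
giving tightness of the maximal jump size. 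This step is the only place in the tightness proof where \eqref{CTail2} is used, so omitting it also means your argument never invokes that standing assumption. You should replace the compact-containment-by-iteration claim with this L\'evy-system estimate (and note that assumption \eqref{CTail2} is precisely what makes it go through). A similar caveat applies to your later assertion that \autoref{thm:tightnessprep} alone shows $P_tf\in C_\infty(\R^d)$ for $x$ far from $\supp f$: that theorem only controls excursions up to scale $R\le 1$, so again the long-range tail bound \eqref{CTail2} (or the exit-time estimates of \autoref{sec:tightness} in combination with it) is what is actually needed there.
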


In the statement of the theorem, $\lambda_0 > 0$ denotes the constant from \autoref{lemma:elementary}.

\begin{proof}
Note that from \autoref{thm:tightnessprep} it follows that for every $A > 0$ and $B \in (0,1)$ there is a constant $\gamma > 0$ and $N \in \N$ such that for every  $x \in \nZd$ and every $n \ge N$ it holds:
\begin{align}
\label{eq:tighthelp1}
\PP^{x}\left( \tau_{\Bn_A(x)} \le \gamma \right) \le B.
\end{align}
Now, let $T > 0$, $x \in \R^d$ and $(x_n)_n \subset \nZd$ with $x_n \to x$. Moreover, let $\tau_n \in [0,T]$ be a sequence of stopping times for $X^{(n)}$, $(\delta_n) \subset [0,1]$ with $\delta_n \to 0$ . Then, by \eqref{eq:tighthelp1} and the strong Markov property it follows for every $n \ge N$ with $\delta_n \le \gamma$:
\begin{align*}
\PP^{x_n}\left( \vert X^{(n)}_{\tau_n+\delta_n} - X^{(n)}_{\tau_n} \vert > A \right) = \PP^{X^{(n)}_{\tau_n}}\left( \vert X^{(n)}_{\delta_n} - X^{(n)}_{0} \vert > A \right) \le \sup_{x \in \nZd} \PP^{x}\left(\tau_{\Bn_A(x)} \le \gamma \right) \le B.
\end{align*}
This verifies condition (A) in \cite{Ald78}. Moreover, tightness of $\left(\sup_{t \in [0,T]} \vert X^{(n)}_t - X^{(n)}_{t_-}\vert\right)_n$ follows from the L\'evy system formula for $(X^{(n)})$ (see \autoref{lemma:LSformula}), which implies that for $A > 1$:
\begin{align*}
\PP^x\left( \sup_{t \in [0,T]} \vert X^{(n)}_t - X^{(n)}_{t_-}\vert > A \right) &\le \EE^x \left( \sum_{t \le T} \mathbbm{1}_{\{ \vert X^{(n)}_t - X^{(n)}_{t_-}\vert > A \}}(t) \right)\\
&= \EE^x \left( \int_{0}^{T} \sum_{y \in \nZd}\mathbbm{1}_{\{ \vert X^{(n)}_t - y \vert > A \}}(t) n^{\alpha} \Cn(X^{(n)}_t,y) \d t \right)\\
&\le T \sup_{x \in \nZd} n^{\alpha} \sum_{y \in \nZd : \vert x-y \vert > A} \Cn(x,y) \le cT  A^{-\delta}
\end{align*}
by \eqref{CTail2}. Since $x_n \to x$, it follows that the laws of $(X^{(n)})$ are tight in $D([0,T];\R^d)$.

Now, let $(n_j) \subset \N$ be a subsequence. We prove the existence of a further subsequence such that (i), (ii), (iii) hold true. First, given $\lambda > \lambda_0$ and $f \in C_c(\R^d)$, one can deduce from \eqref{eq:resHRE} in \autoref{cor:regest} that the family $(E^{(n_{j})}(U_{\lambda}^{(n_{j})} R^{(n_{j})}(f))$ is equicontinuous and equibounded. Therefore, convergence of the family along a subsequence follows from the Arzel\` a-Ascoli theorem.\\
Let $(t_i)_{i \in \N} \subset (0,\infty)$ and $(f_m)_{m \in \N} \subset C_c(\R^d)$ be dense. Note that \eqref{eq:sgHRE} of \autoref{cor:regest} implies that the family $(E^{(n_j)}(P^{(n_j)}_{t_i} R^{(n_j)}(\widetilde{f}_m) ))_{j,i,m}$ is equicontinuous and equibounded, where we define $\widetilde{f}_m := f_m/\Vert f_m \Vert_{L^{\infty}(\nZd)}$. Again, by the Arzel\`a-Ascoli theorem one can extract a subsequence $(n_{j_k}) \subset (n_j)$ such that $(E^{(n_{j_k})}(P^{(n_{j_k})}_{t_i} R^{(n_{j_k})}(\widetilde{f}_m) ))_{k}$ converges uniformly on compacts as $k \to \infty$. We write $P_{t_i} \widetilde{f}_m$ for the limit object and point out that that it can be extended to all $t > 0$ using the same argument, as in \cite{HuKa07}: Find a subsequence $(t_{i_l})_l \subset (t_i)_i$ such that $t_{i_l} \to t$ as $l \to \infty$ and prove that $(E^{(n_{j_k})}(P^{(n_{j_k})}_{t} R^{(n_{j_k})}(\widetilde{f}_m) ))_{k}$ is a Cauchy-sequence with respect to uniform convergence on compacts, using that
\begin{align*}
&\vert E^{(n_{j_k})}(P^{(n_{j_k})}_{t} R^{(n_{j_k})}(\widetilde{f}_m) ) - E^{(n_{j_k})}(P^{(n_{j_k})}_{t_{i_l}} R^{(n_{j_k})}(\widetilde{f}_m) ) \vert \to 0, ~~\text{as } k,l \to \infty,\\
&\vert E^{(n_{j_k})}(P^{(n_{j_k})}_{t_{i_l}} R^{(n_{j_k})}(\widetilde{f}_m) ) - E^{(n_{j_{k'}})}(P^{(n_{j_{k'}})}_{t_{i_l}} R^{(n_{j_{k'}})}(\widetilde{f}_m) ) \vert \to 0, ~~\text{as } k,k' \to \infty,\\
&\vert E^{(n_{j_{k'}})}(P^{(n_{j_{k'}})}_{t_{i_l}} R^{(n_{j_{k'}})}(\widetilde{f}_m) ) - E^{(n_{j_{k'}})}(P^{(n_{j_{k'}})}_{t} R^{(n_{j_{k'}})}(\widetilde{f}_m) ) \vert \to 0, ~~\text{as } k',l \to \infty,
\end{align*}
where all convergence statements above are meant to be in the sense of uniform convergence on compacts.
While the second convergence result is already known from above, the first and third line follow from estimate \eqref{eq:sgHRE2} of \autoref{cor:regest}. Therefore, the limit $P_t \widetilde{f}_m$ exists uniformly on compacts for every $t > 0$. By density of $(f_m) \subset C_c(\R^d)$, we have proved the desired convergence
result in (ii). Following the arguments from \cite{HuKa07}, one can establish that $(P_t)$ extends to a semigroup on $C_c(\R^d)$ and therefore is associated to a strong Markov process on $\R^d$. This proves (ii). (iii) follows from standard arguments (see \cite{HuKa07}, \cite{BaKu08}): While tightness yields that the laws of $(X^{(n_{j_k})})$ are precompact in the sense that every subsequence of the laws must weakly converge along some further subsequence, properties (i), (ii) guarantee that the weak limit is independent of the actual subsequence since its finite dimensional distributions are determined by $(P_t)$, and hence coincide. This implies that the laws of $(X^{(n_{j_k})})$ already converge.
\end{proof}

\begin{theorem}
\label{thm:convergence}
Assume that \eqref{K1}, \eqref{K2}, \eqref{Poinc}, \eqref{Sob}, \eqref{CTail}, \eqref{CTail2} hold true for some $\alpha \in (0,2]$, $\sigma > 0$ and $\theta \in (\frac{d}{\alpha},\infty]$. Let $(x_n)_n \subset \nZd$ and $x \in \R^d$ with $x_n \to x$ and $T > 0$. Let $(\cE,\mathcal{F})$ be a regular lower bounded semi-Dirichlet form on $L^2(\R^d)$ with core $\mathcal{F}_0 \subset \mathcal{F}$ and $\cE_{\lambda_0} \ge 0$. Assume that for every $\lambda > \lambda_0$, $f \in C_c(\R^d)$, $g \in \mathcal{F}_0$ and every sequence $(n_j) \subset \N$ such that $(E^{(n_{j})}(U_{\lambda}^{(n_{j})} R^{(n_{j})}(f)))_{j}$ converges uniformly on compact subsets, the following holds:
\begin{itemize}
\item[(a)] $H := \lim_{j \to \infty} E^{(n_{j})}(U_{\lambda}^{(n_{j})} R^{(n_{j})}(f)) \in \mathcal{F}$,
\item[(b)] there exists a further subsequence $(n_{{j_k}}) \subset (n_{j})$ such that for every $g \in \mathcal{F}_0$, it holds
\begin{align}
\label{eq:convergenceb}
\cE^{(n_{{j_k}})}(U_{\lambda}^{(n_{{j_k}})} R^{(n_{{j_k}})} f,R^{(n_{{j_k}})} g) \to \cE(H,g), ~~\text{as } k \to \infty.
\end{align}
\end{itemize}
Then the full sequence of $\PP^{x_{n}}$-laws of $(\Xn_t)_{t \in [0,T]}$ converges weakly to a probability $\PP^x$ with respect to the $D([0,T];\R^d)$-topology. Write $X$ for the canonical process on $D([0,T];\R^d)$, then the process $(X,\PP^x)$ is the Markov process associated with the form $(\cE,\mathcal{F})$.
\end{theorem}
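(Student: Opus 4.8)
The plan is to show that the subsequential limits of the rescaled resolvents $E^{(n)}(U^{(n)}_{\lambda}R^{(n)}f)$ are nothing but the $L^2(\R^d)$-resolvent $G_{\lambda}$ of $(\cE,\mathcal{F})$ evaluated at $f$, and then to feed this uniqueness into \autoref{thm:tightness}. Fix $\lambda>\lambda_0$ and $f\in C_c(\R^d)$. From \eqref{eq:defresolvent} and Markovianity of $(P^{(n)}_t)$ we have $\Vert U^{(n)}_{\lambda}R^{(n)}f\Vert_{L^{\infty}(\nZd)}\le\lambda^{-1}\Vert f\Vert_{L^{\infty}(\R^d)}$, while \eqref{eq:resHRE} of \autoref{cor:regest} gives uniform $\gamma$-H\"older control; hence $(E^{(n)}(U^{(n)}_{\lambda}R^{(n)}f))_n$ is equibounded and equicontinuous, and by Arzel\`a--Ascoli it is precompact for uniform convergence on compacta, every limit being $\gamma$-H\"older continuous. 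Let $(n_j)$ be any subsequence along which the family converges, say to $H$; by assumption (a), $H\in\mathcal{F}$.

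\textbf{Identification of the limit.} Along the further subsequence $(n_{j_k})$ from assumption (b) we have $\cE^{(n_{j_k})}(U^{(n_{j_k})}_{\lambda}R^{(n_{j_k})}f,R^{(n_{j_k})}g)\to\cE(H,g)$ for every $g\in\mathcal{F}_0$. On the other hand, the discrete resolvent identity \eqref{eq:resolventsol} reads $\cE^{(n)}(U^{(n)}_{\lambda}R^{(n)}f,R^{(n)}g)+\lambda\langle U^{(n)}_{\lambda}R^{(n)}f,R^{(n)}g\rangle=\langle R^{(n)}f,R^{(n)}g\rangle$, and since $g$ (lying in the core) has compact support $K$ and $E^{(n_{j_k})}(U^{(n_{j_k})}_{\lambda}R^{(n_{j_k})}f)\to H$ uniformly on $K$, the Riemann sums $n^{-d}\sum_{x\in\nZd}(\cdot)(x)g(x)$ defining the two scalar products converge to $\int_{\R^d}Hg$ and $\int_{\R^d}fg$, respectively. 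Passing to the limit yields $\cE(H,g)+\lambda\langle H,g\rangle_{L^2(\R^d)}=\langle f,g\rangle_{L^2(\R^d)}$ for all $g\in\mathcal{F}_0$. To upgrade this to all $g\in\mathcal{F}$, I would use that a lower bounded semi-Dirichlet form satisfies, for some $\lambda_1>\lambda_0$, a sector condition $|\cE_{\lambda_1}(u,v)|\le K\,\cE_{\lambda_1}(u,u)^{1/2}\cE_{\lambda_1}(v,v)^{1/2}$, so $g\mapsto\cE(H,g)$ is continuous in the form norm $\cE_{\lambda_1}^{1/2}$ (the remaining two terms being $L^2$-continuous, hence also form-continuous); since $\mathcal{F}_0$ is dense in $(\mathcal{F},\cE_{\lambda_1}^{1/2})$ the identity $\cE_{\lambda}(H,g)=\langle f,g\rangle$ persists for all $g\in\mathcal{F}$. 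Because $\cE_{\lambda_0}\ge0$ and the sector condition hold, $\cE_{\lambda}$ is coercive for $\lambda>\lambda_0$, so the resolvent $G_{\lambda}$ of $(\cE,\mathcal{F})$ is well defined (Lax--Milgram for nonsymmetric forms) and we conclude $H=G_{\lambda}f$. In particular $H$ is independent of the subsequence, so the precompactness above forces the \emph{full} sequence $E^{(n)}(U^{(n)}_{\lambda}R^{(n)}f)$ to converge uniformly on compacta to the continuous representative of $G_{\lambda}f$, for every $\lambda>\lambda_0$ and $f\in C_c(\R^d)$.

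\textbf{Conclusion via \autoref{thm:tightness}.} By \autoref{thm:tightness} the laws of $(\Xn)$ are tight in $D([0,T];\R^d)$, so every subsequence admits a weakly convergent further subsequence $(n_{j_k})$; along it the limit is the law $\PP^x$ of a strong Markov process $X$ whose semigroup $P_t=\lim_k E^{(n_{j_k})}(P^{(n_{j_k})}_tR^{(n_{j_k})}(\cdot))$ has resolvent $\int_0^{\infty}e^{-\lambda t}P_t\,\d t=\lim_k E^{(n_{j_k})}(U^{(n_{j_k})}_{\lambda}R^{(n_{j_k})}(\cdot))$, the last identity by dominated convergence in \eqref{eq:defresolvent}. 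By the previous step this limit equals $G_{\lambda}$ on $C_c(\R^d)$, hence on $L^2(\R^d)$ by density; so the resolvent of $(P_t)$ coincides with that of the $L^2$-semigroup of $(\cE,\mathcal{F})$, and uniqueness of the Laplace transform identifies $(P_t)$ with that semigroup. Consequently the finite-dimensional distributions of $X$ — together with $X_0=x$, which follows from $x_n\to x$ — are determined by $(\cE,\mathcal{F})$ and do not depend on the subsequence; all subsequential weak limits of the $\PP^{x_n}$-laws therefore coincide, and tightness (Prokhorov) upgrades this to weak convergence of the full sequence to $\PP^x$, with $(X,\PP^x)$ the Markov process associated with $(\cE,\mathcal{F})$.

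\textbf{Main obstacle.} The delicate step is the identification $H=G_{\lambda}f$: it is here that the convergence of the \emph{nonsymmetric} bilinear forms (assumptions (a)--(b)) must be married with the discrete resolvent identity and with the core/sector-condition density argument; because there is no self-duality of the limiting semigroup to exploit, one is forced to match resolvents directly, and the verification that the limiting identity extends from $\mathcal{F}_0$ to $\mathcal{F}$ (so that the Lax--Milgram uniqueness applies) is the crux.
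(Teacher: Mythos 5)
Your proof is correct and follows essentially the same route as the paper: pass to the limit in the discrete resolvent identity \eqref{eq:resolventsol} using hypotheses (a)--(b) and the uniform-on-compacta convergence of $E^{(n)}(U^{(n)}_{\lambda}R^{(n)}f)$, extend the resulting variational identity $\cE(H,g)+\lambda\langle H,g\rangle=\langle f,g\rangle$ from the core $\mathcal{F}_0$ to all of $\mathcal{F}$ by density (sector condition), identify $H$ with the $\lambda$-resolvent $U_{\lambda}f$ of $(\cE,\mathcal{F})$, and conclude via \autoref{thm:tightness} that all subsequential weak limits coincide. The paper is more terse on the final step, compressing the resolvent--semigroup--Laplace-transform argument into ``the bilinear form corresponding to the limit process is uniquely determined,'' whereas you spell it out; the content is the same.
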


\begin{proof}
Due to (iii) of \autoref{thm:tightness}, we know that given any subsequence $(n_j) \subset \N$, the $\PP^{x_{n_j}}$ laws of $(X^{(n_j)})_j$ converge weakly along some further subsequence $(n_{j_k})$ to a probability $\PP^x$. In order for convergence of the full sequence $(\Xn)$ to hold true, we need to prove that the limit of $(X^{(n_{j_k})})$ is independent of the given subsequence $(n_{j_k})$. Write $u_{n_{j_k}} := U_{\lambda}^{(n_{j_k})} R^{(n_{j_k})}(f)$ and $H := \lim_{k \to \infty} E^{(n_{j_k})} u_{n_{j_k}}$ and assume that (a), (b) hold true. It follows that
\begin{align}
\label{eq:convergencehelp1}
\begin{split}
\cE(H,g) &=\lim_{l \to \infty} \cE^{(n_{j_{k_l}})}(u_{n_{j_{k_l}}},R^{(n_{{j_{k_l}}})} g)\\
&=\lim_{l \to \infty} <R^{(n_{{j_{k_l}}})} f,R^{(n_{{j_{k_l}}})} g>_{L^2(n_{j_{k_l}}^{-1}\Z^d)} - \lambda <u_{n_{j_{k_l}}},R^{(n_{{j_k}})} g>_{L^2(n_{j_{k_l}}^{-1}\Z^d)}\\
&= (f,g)_{L^2(\R^d)} - \lambda(H,g)_{L^2(\R^d)}
\end{split}
\end{align}
for every $g \in \mathcal{F}_0$, where $(n_{j_{k_l}}) \subset (n_{j_k})$ denotes the subsequence from (b) and we used the fact that $u_{n_{j_k}}$ is the $\lambda$-resolvent for $\cE^{(n_{j_{k_l}})}$ and that $(E^{(n_{j_k})} u_{n_{j_k}},g)_{L^2(\R^d)} \to (H,g)_{L^2(\R^d)}$. The latter is due to the fact that $(E^{(n_{j_k})} u_{n_{j_k}})_k$ converges uniformly on compacts, $g$ has compact support, and dominated convergence. Using that $\mathcal{F}_0$ is dense in $\mathcal{F}$ with respect to the norm induced by $\cE^{s}(\cdot,\cdot) + \lambda_0\Vert \cdot \Vert^2_{L^2(\R^d)}$, \eqref{eq:convergencehelp1} holds for every $g \in \mathcal{F}$ and therefore we identify $U_{\lambda}f = H \in \mathcal{F}$, i.e., $H$ is the $\lambda$-resolvent of $f$ for $\cE$. Here, $\cE^s := \frac{1}{2}(\cE(u,v) + \cE(v,u))$. Therefore, the limit $H$ does not depend any more on the choice of $(n_j)$, so we conclude that $U^{(n)}_{\lambda} (R^{(n)} f) \to H = U_{\lambda} f$. Thus, also the bilinear form corresponding to the limit process of $(X^{(n_j)})_j$ is uniquely determined. This concludes the proof.
\end{proof}

It remains to verify conditions (a), (b) in order to guarantee weak convergence of the full sequence $(\Xn_t)_{t \in[0,T]}$. In particular, this amounts to proving that all limits in \eqref{eq:convergenceb} coincide. We will do so by providing the limiting bilinear form $\cE$ being governed by coefficient functions $(a_{i,j})_{i,j = 1}^d, (b_i)_{i = 1}^d$ on $\R^d$ in the case $\alpha = 2$ and a jumping kernel $K$ on $\R^d \times \R^d$ when $\alpha \in (0,2)$, which will be determined uniquely through the family of conductances $(\Cn)$. The existence of such functions will be posed as a separate assumption.
We treat the two cases $\alpha = 2$ and $\alpha \in (0,2)$ separately in the following two sections.

\subsection{Approximation of strongly local forms}

In this section, we assume that \eqref{K1}, \eqref{K2}, \eqref{Poinc} and \eqref{CTail} hold true with $\alpha = 2$, $\sigma > 0$ and $\theta \in (\frac{d}{2},\infty]$. Furthermore, we assume that $\Xn$ is comparable to the nearest neighbor random walk (NNRW) in the following sense: \\
There exists $B > 0$ such that for every $u \in L^2(\nZd)$:
\begin{align}
\label{eq:NNRWcomp}
\cEns(u,u) \ge B \cE^{(n)}_{NN}(u,u).
\end{align}
Here, the NNRW is defined through the conductances $NN : \nZd \times \nZd \to [0,\frac{1}{2}]$, given by
\begin{align*}
NN(x,y) &:= \frac{1}{2}\mathbbm{1}_{\lbrace\vert x-y \vert = 1/n\rbrace}(x,y),\\
\cE^{(n)}_{NN}(u,u) &:= n^{2-d}\sum_{x \in \nZd} \sum_{y \in \nZd} (u(x)-u(y))^2 NN(x,y).
\end{align*}

\begin{remark}
A sufficient condition for comparability to the NNRW (see \eqref{eq:NNRWcomp}) is the existence of $\delta > 0$ and $N \in \N$ such that for any pair $(x,y) \in \nZd \times \nZd$ with $\vert x-y \vert = \frac{1}{n}$ there exist $k \le N$, $x_0 = x, x_1, \dots, x_k = y \in \nZd$ such that $\Cns(x_i,x_{i+1}) \ge \delta$ for every $i \in \{0,\dots,k\}$.
\end{remark}

Moreover, we assume that $\Xn$ is of bounded range, i.e., that there exists a constant $C > 0$ such that $\Cn(x,y) = 0$ if $\vert x-y \vert \ge C/n$.

\begin{remark}
\begin{itemize}
\item[(i)] We recall that if $\Xn$ is of bounded range, then \eqref{eq:suffCTailbdrange} is a sufficient condition for \eqref{CTail}, \eqref{CTail2}.
\item[(ii)] It is possible to drop assumption \eqref{CTail2} completely because all statements of \autoref{thm:tightness}, \autoref{thm:convergence} remain valid due to assumption \eqref{CTail2} being trivial for $n > C$. Moreover, \eqref{Sob} follows from the global Sobolev inequality for $\cEn_{NN}$ and \eqref{eq:cutoff}.
\end{itemize}
\end{remark}

We require some additional notation (see \cite{DeKu13}) in order to introduce the connection between $(\Cn)$ and the coefficient functions $a_{i,j},b_i$ which determine the limiting form $(\cE,H^1(\R^d))$:
\begin{itemize}
\item First, recall that $\nabla^{(n)}_i u(x) := n(u(x+e_i/n)-u(x))$ for any function $u : \nZd \to \R$.
\item Define $\mathcal{P}(x,y)$ as the set of shortest nearest neighbor paths (SNNP) in $\nZd$ from $x \in \nZd$ to $y \in \nZd$ and set
\begin{align*}
P^{x,y}(w,z) = \frac{1}{\vert \mathcal{P}(x,y)\vert} \sum_{\sigma \in \mathcal{P}(x,y)} \mathbbm{1}_{\lbrace \sigma = (x = \sigma_0,\dots,y = \sigma_l) : \exists k \le l : w = \sigma_{k-1},z=\sigma_{k} \rbrace}(\sigma), ~~w,z \in \nZd,
\end{align*}
i.e., $P^{x,y}(w,z)$ is the ratio of SNNP from $x$ to $y$ using the edge $(w,z)$ to all SNNP from $x$ to $y$.
\end{itemize}

The quantity $P^{x,y}(w,z)$ is motivated by the following two useful identities (see also \cite{DeKu13}):

\begin{lemma}[see Lemma 5.1 in \cite{BKU10}, p.138 in \cite{BKU10}]
\label{lemma:BKU}
Let $f \in L^2(\nZd)$. Then, for every $x,y \in \nZd$, the following identities hold true:
\begin{align}
\label{eq:BKU}
f(x) - f(y) &= \frac{1}{n} \sum_{i=1}^d \sum_{z \in \nZd} \left( P^{x,y}(z + e_i/n,z)-P^{x,y}(z,z+e_i/n))\right) \nabla^{(n)}_i f(z),\\
\label{eq:BKUhelp}
n (x_i-y_i) &= \sum_{z \in \nZd} \left(P^{x,y}(z+e_i/n,z) + P^{x,y}(z,z+e_i/n) \right), \qquad i = 1,\dots,d.
\end{align}
\end{lemma}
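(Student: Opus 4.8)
The plan is to prove \eqref{eq:BKU} by telescoping $f$ along a single shortest nearest neighbor path and then averaging over the set $\mathcal{P}(x,y)$, and to obtain \eqref{eq:BKUhelp} as the special case of the coordinate functions.

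I would first fix $x,y\in\nZd$ and a path $\sigma=(x=\sigma_0,\sigma_1,\dots,\sigma_l=y)\in\mathcal{P}(x,y)$ and telescope: $f(x)-f(y)=\sum_{k=1}^{l}\big(f(\sigma_{k-1})-f(\sigma_k)\big)$. Every increment corresponds to a nearest neighbor step, and if $\sigma_k=\sigma_{k-1}+e_i/n$ then $f(\sigma_{k-1})-f(\sigma_k)=-\tfrac1n\nabla^{(n)}_i f(\sigma_{k-1})$, while if $\sigma_k=\sigma_{k-1}-e_i/n$ then $f(\sigma_{k-1})-f(\sigma_k)=\tfrac1n\nabla^{(n)}_i f(\sigma_k)$. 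In both cases the increment equals $\pm\tfrac1n\nabla^{(n)}_i f(z)$, where $z$ is the endpoint of the traversed edge with smaller $i$-th coordinate and the sign is $+$ when $\sigma$ runs this edge ``downwards'' (from $z+e_i/n$ to $z$) and $-$ when ``upwards'' (from $z$ to $z+e_i/n$). Summing over $k$ gives, for the fixed path $\sigma$,
\[
f(x)-f(y)=\tfrac1n\sum_{i=1}^{d}\sum_{z\in\nZd}\Big(\mathbbm{1}_{\{\sigma\text{ runs }(z+e_i/n,z)\}}-\mathbbm{1}_{\{\sigma\text{ runs }(z,z+e_i/n)\}}\Big)\nabla^{(n)}_i f(z),
\]
the inner sum being finite since $\sigma$ has finitely many steps.

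The combinatorial point I would isolate as a short claim is that a shortest nearest neighbor path from $x$ to $y$ is monotone in every coordinate — in direction $i$ it makes exactly $n|x_i-y_i|$ steps, all of sign $\sgn(y_i-x_i)$ — so it traverses each directed edge at most once and never runs $i$-edges in both directions. Granting this, I would average the displayed identity over all $\sigma\in\mathcal{P}(x,y)$; since by definition $P^{x,y}(w,z)=|\mathcal{P}(x,y)|^{-1}\sum_{\sigma}\mathbbm{1}_{\{\sigma\text{ runs }(w,z)\}}$, this produces precisely \eqref{eq:BKU}. (The $L^2$ hypothesis plays no role in this algebraic identity; it is recorded only because that is the setting in which \eqref{eq:BKU} is used, and, combined with a cutoff equal to $1$ on all relevant vertices, it legitimizes the next step.)

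Finally, to get \eqref{eq:BKUhelp} I would apply \eqref{eq:BKU} to the $i$-th coordinate function $f(w)=w_i$, for which $\nabla^{(n)}_j f\equiv\delta_{ij}$, so that the sum over $j$ collapses to an expression for $n(x_i-y_i)$ as a single sum over $z$ of $P^{x,y}(z+e_i/n,z)-P^{x,y}(z,z+e_i/n)$; by the monotonicity claim one of the two families $\{P^{x,y}(z+e_i/n,z)\}_z$, $\{P^{x,y}(z,z+e_i/n)\}_z$ vanishes identically for each fixed $i$, and a short sign bookkeeping then converts this into the unsigned form \eqref{eq:BKUhelp}. The only genuine obstacle is the monotonicity claim itself: one argues that a path containing steps of both signs in some coordinate can be strictly shortened, contradicting minimality of the length $n\sum_i|x_i-y_i|$; everything else is routine.
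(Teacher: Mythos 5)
Your telescoping-and-averaging argument for \eqref{eq:BKU} is correct and is the natural proof: telescope $f(x)-f(y)$ along a fixed shortest nearest-neighbor path, rewrite each step as $\pm\tfrac1n\nabla^{(n)}_i f(z)$ with $z$ the lower endpoint of the traversed $i$-edge, use that a shortest path is simple so the count of traversals of a directed edge equals the corresponding indicator, and average over $\mathcal{P}(x,y)$. The coordinate-wise monotonicity claim for shortest paths is also right and proved exactly as you sketch. Since the paper gives no proof and only cites \cite{BKU10}, there is no in-text argument to compare with, and yours is the expected one.

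The genuine gap is in the phrase ``a short sign bookkeeping then converts this into the unsigned form \eqref{eq:BKUhelp}.'' Carrying it out shows that \eqref{eq:BKUhelp} as printed is not an identity. Substituting $f(w)=w_i$ into \eqref{eq:BKU} gives the signed identity
\begin{align*}
n(x_i-y_i)=\sum_{z\in\nZd}\bigl(P^{x,y}(z+e_i/n,z)-P^{x,y}(z,z+e_i/n)\bigr),
\end{align*}
and by monotonicity exactly one of the two families of summands vanishes identically. If $x_i\ge y_i$ the minus family vanishes and this coincides with \eqref{eq:BKUhelp}; but if $x_i<y_i$ the plus family vanishes, so the right-hand side of \eqref{eq:BKUhelp} equals $n\vert x_i-y_i\vert>0$ while the left-hand side $n(x_i-y_i)$ is negative, and the displayed equality fails. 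The correct identity --- and the one the paper actually uses when bounding $F^{(n)}_{i,j}$ and $H^{(n)}_i$ --- replaces $n(x_i-y_i)$ by $n\vert x_i-y_i\vert$. As written, your ``bookkeeping'' step silently asserts an equality that is false for half the cases; you should either flag the sign error in the lemma statement or make the case distinction (and the resulting absolute value) explicit.
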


\begin{itemize}
\item For $i,j \in \{1,\dots,d\}$, $z,w \in \nZd$ we define as in \cite{DeKu13}:
\begin{align*}
G^{(n)}_{i,j}(z,w) := \sum_{x \in \nZd} \sum_{y \in \nZd}& (P^{x,y}(z + e_i/n,z)-P^{x,y}(z,z+e_i/n))\\
&(P^{x,y}(w + e_j/n,w)-P^{x,y}(w,w+e_j/n))\Cns(x,y).
\end{align*}
\item We introduce a similar quantity for $\Cna$, namely for $i \in \{1,\dots,d\}$, $x,z \in \nZd$:
\begin{align*}
H^{(n)}_i(x,z) := \sum_{y \in \nZd} (P^{x,y}(z + e_i/n,z)-P^{x,y}(z,z+e_i/n))\Cna(x,y).
\end{align*}
\item For $z \in \nZd$, we define 
\begin{align*}
F^{(n)}_{i,j}(z) = \sum_{w \in \nZd} G^{(n)}_{i,j}(w,z), \qquad B^{(n)}_i(z) = n \sum_{x \in \nZd} H^{(n)}_i(x,z).
\end{align*}
We abuse notation and write $F^{(n)}_{i,j}(z) := E^{(n)}F^{(n)}_{i,j}(z), B^{(n)}_i(z) := E^{(n)}B^{(n)}_i(z)$.
\end{itemize}

By a straightforward computation, similar to (5.1) in \cite{BKU10}, one verifies that
\begin{align*}
\cEn(f,g) &= n^{2-d}\sum_{x \in \nZd}\sum_{y \in \nZd}(f(x)-f(y))(g(x)-g(y))\Cns(x,y)\\
&+ 2n^{2-d}\sum_{x \in \nZd}\sum_{y \in \nZd}(f(x)-f(y))g(x)\Cna(x,y)\\
&= n^{-d}\sum_{i,j =1}^d \sum_{z \in \nZd} \sum_{w \in \nZd}\nabla^{(n)}_i f(z)\nabla^{(n)}_j g(w)G^{(n)}_{i,j}(w,z)\\
&+ 2n^{1-d} \sum_{i = 1}^d \sum_{x \in \nZd} \sum_{z \in \nZd} \nabla^{(n)}_i f(z)g(x) H^{(n)}_i(x,z).
\end{align*}

We are now ready to formulate an assumption on the coefficient functions $(a_{i,j})_{i,j = 1}^d, (b_i)_{i = 1}^d$ of the limiting bilinear form:

\begin{assumption}
\label{ass:local}
There exist $a_{i,j} , b_i : \R^d \to \R$, $i,j \in \{1,\dots,d\}$ with $a_{i,j} = a_{j,i}$, such that $\Vert F^{(n)}_{i,j} - a_{i,j}\Vert_{L^1_{loc}(\R^d)} \to 0$ and $\Vert B^{(n)}_i - b_i \Vert_{L^1_{loc}(\R^d)} \to 0$ as $n \to \infty$, and moreover
$a_{i,j}$ is uniformly elliptic, bounded, and $b_i$ satisfies $|b_i|^2 \in L^{\theta}(\R^d)$.
\end{assumption}

Note that \autoref{ass:local} is sufficient for $(\cE,H^1(\R^d))$ defined by
\begin{align}
\label{eq:limitforml}
\begin{split}
\cE(f,g) &= \cE^{a_{i,j}}(f,g) + \cE^{b_i}(f,g)\\
&= \int_{\R^d} a_{i,j}(x)\partial_i f(x)\partial_j g(x) \d x + 2\int_{\R^d} b_i(x)\partial_i f(x)g(x) \d x
\end{split}
\end{align}
is a regular lower bounded semi-Dirichlet form on $L^2(\R^d)$ (see \cite{MaRo92}, p.30-35 in \cite{Osh13}).\\
Moreover, \autoref{ass:local} implies that $F^{(n)}_{i,j} \to a_{i,j}$, $B_i^{(n)} \to b_i$ in measure on each compact set and that a subsequence converges pointwise a.e.

\begin{remark}
Note that $F^{(n)}_{i,j} \in L^{\infty}(\nZd)$ and $|B^{(n)}_i|^2 \in L^{\theta}(\nZd)$ with norms uniform in $n$.
Boundedness of $F_{i,j}^{(n)}$ can be proved as follows: For every $n \in \N$ $z \in \nZd$:
\begin{align*}
F_{i,j}^{(n)}(z) &\le n^2 \sum_{x \in \nZd : \vert x-z\vert \le \frac{C}{n}} \sum_{y \in \nZd : \vert x-y \vert \le \frac{C}{n}} \vert x-y \vert^2 \Cns(x,y)\\
&\le C^d \sup_{x \in \nZd} n^2 \sum_{y \in \nZd : \vert x-y \vert \le \frac{C}{n}} \vert x-y \vert^2 \Cns(x,y) \le c_1 < \infty
\end{align*}
for some $c_1 > 0$, due to \eqref{eq:BKUA3}. Moreover, we used \eqref{eq:BKUhelp} and the fact that $\Xn$ is of bounded range.
Besides, we have for every $z \in \nZd$:
\begin{align*}
&B^{(n)}_i(z) \le n^2 \sum_{x \in \nZd : \vert x-z \vert \le \frac{C}{n}} \sum_{y \in \nZd : \vert x-y \vert \le \frac{C}{n}} \vert x-y \vert \vert \Cna(x,y) \vert\\
&\le \left[\sum_{x: \vert x-z \vert \le \frac{C}{n}} \hspace{-0.1cm}\left(n^2 \hspace*{-2ex}\sum_{y: \vert x-y \vert \le \frac{C}{n}} \vert x-y \vert^2 J^{(n)}(x,y)\right)^{\frac{2\theta}{2\theta-1}}\right]^{1 - \frac{1}{2\theta}}\hspace{-0.2cm} \left[ \sum_{x: \vert x-z \vert \le \frac{C}{n}}\left(n^2 \hspace*{-2ex}\sum_{y: \vert x-y \vert \le \frac{C}{n}} \frac{\vert \Cna(x,y)\vert^2}{J^{(n)}(x,y)}\right)^{\theta} \right]^{\frac{1}{2\theta}}\\
&\le c_2 C^{d\left(1 - \frac{1}{2\theta} \right)} \left[ \sum_{x: \vert x-z \vert \le \frac{C}{n}}\left(n^2 \hspace*{-2ex}\sum_{y: \vert x-y \vert \le \frac{C}{n}} \frac{\vert \Cna(x,y)\vert^2}{J^{(n)}(x,y)}\right)^{\theta} \right]^{\frac{1}{2\theta}}
\end{align*}
for some $c_2 > 0$, due to \eqref{eq:BKUA3}, \eqref{eq:BKUhelp} and \eqref{K1}. Consequently,
\begin{align*}
\Vert (B_i^{(n)})^2\Vert_{L^{\theta}(\nZd)}^{\theta} &\le c_3 n^{-d} \sum_{z \in \nZd} \sum_{x: \vert x-z \vert \le \frac{C}{n}}\left(n^2 \hspace*{-2ex}\sum_{y: \vert x-y \vert \le \frac{C}{n}} \frac{\vert \Cna(x,y)\vert^2}{J^{(n)}(x,y)}\right)^{\theta}\\
&\le c_4 C^d \left \Vert n^2 \sum_{y \in \nZd} \frac{|\Cna(\cdot,y)|^2}{|J^{(n)}(\cdot,y)|} \right\Vert_{L^{\theta}(\nZd)}^{\theta} \le c_5
\end{align*}
for some $c_3,c_4, c_5 > 0$.

\end{remark}

The following theorem is the main result of this article in case $\alpha = 2$:

\begin{theorem}[central limit theorem]
\label{thm:CLTl}
Assume that $(\Cn)$ satisfies \eqref{K1}, \eqref{K2}, \eqref{Poinc} and \eqref{CTail}  with $\alpha = 2$, $\sigma > 0$ and $\theta \in (\frac{d}{2},\infty]$. Furthermore, we assume that $\Xn$ is comparable to the NNRW and of bounded range.
Assume that there are $a_{i,j}$, $b_i$ such that \autoref{ass:local} holds true. Let $(x_n)_n \subset \nZd$ and $x \in \R^d$ with $x_n \to x$ and $T > 0$. Then the $\PP^{x_{n}}$-laws of $(\Xn_t)_{t \in [0,T]}$ converge weakly, with respect to the $D([0,T];\R^d)$-topology to the $\PP^x$-law of $(X_t)_{t \in [0,T]}$, where $(X,\PP^x)$ is the Markov process corresponding to the form $(\cE,H^1(\R^d))$.
\end{theorem}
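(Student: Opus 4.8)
\emph{Strategy.} The plan is to deduce the theorem from \autoref{thm:convergence}, applied with $\mathcal{F}=H^1(\R^d)$ and core $\mathcal{F}_0=C_c^\infty(\R^d)$. By \autoref{ass:local} and \eqref{eq:limitforml}, $(\cE,H^1(\R^d))$ is a regular lower bounded semi-Dirichlet form; to check $\cE_{\lambda_0}\ge 0$ with the $\lambda_0$ of \autoref{lemma:elementary} I would test the uniform-in-$n$ Gårding inequality \eqref{eq:Garding-markov} against $R^{(n)}g$ with $g\in C_c^\infty(\R^d)$ and pass to the limit exactly as in the last step below. By \autoref{thm:tightness} the laws of $(\Xn)$ are already tight, and along any subsequence a further subsequence makes $E^{(n_j)}(U_\lambda^{(n_j)}R^{(n_j)}f)$ converge uniformly on compacts for every $f\in C_c(\R^d)$ and $\lambda>\lambda_0$; so it remains only to verify conditions (a) and (b) of \autoref{thm:convergence}. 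Fix such $f,\lambda$ and set $u_n:=U_\lambda^{(n)}R^{(n)}f$ and $H:=\lim_j E^{(n_j)}u_{n_j}$.

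\emph{Verifying (a).} I would first use the resolvent estimate \eqref{eq:resolventestimate} to bound $\cEn(u_n,u_n)+\Vert u_n\Vert_{L^2(\nZd)}^2$ uniformly in $n$; the Gårding inequality \eqref{eq:Garding-markov} then bounds $\cEns(u_n,u_n)$ uniformly, and comparability to the NNRW \eqref{eq:NNRWcomp} yields $\sup_n\big(\Vert u_n\Vert_{L^2(\nZd)}^2+\sum_{i=1}^d\Vert\nabla^{(n)}_i u_n\Vert_{L^2(\nZd)}^2\big)<\infty$. By a standard discrete-to-continuum compactness argument (summation by parts against smooth test functions to handle the discretization commutators; see \cite{DeKu13}, \cite{BKU10}), a subsequence $(n_{j_k})$ then has $E^{(n_{j_k})}u_{n_{j_k}}$ converging weakly in $H^1(\R^d)$, with $\nabla^{(n_{j_k})}_i u_{n_{j_k}}$ converging weakly in $L^2(\R^d)$ to the corresponding weak derivative of the limit. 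Since $E^{(n_j)}u_{n_j}\to H$ already uniformly on compacts, the two limits coincide: $H\in H^1(\R^d)$, which is (a), and $\nabla^{(n_{j_k})}_i u_{n_{j_k}}\rightharpoonup\partial_i H$ in $L^2_{\loc}(\R^d)$.

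\emph{Verifying (b).} Next I would expand $\cEn(u_n,R^{(n)}g)$ for $g\in C_c^\infty(\R^d)$ using the representation in terms of $G^{(n)}_{i,j}$ and $H^{(n)}_i$ recorded before \autoref{ass:local}. Since $\Xn$ is of bounded range, $G^{(n)}_{i,j}(w,z)$ vanishes unless $|w-z|\le C/n$ and $H^{(n)}_i(x,z)$ vanishes unless $|x-z|\le C/n$, so replacing $\nabla^{(n)}_j g(w)$ by $\nabla^{(n)}_j g(z)$ and $g(x)$ by $g(z)$ costs only $O(n^{-1}\Vert g\Vert_{C^2(\R^d)})$; a Cauchy--Schwarz estimate based on \eqref{eq:BKUA3}, \eqref{K1}, the bounded-range property and the gradient bound from the previous step shows these corrections tend to $0$. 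Recalling $F^{(n)}_{i,j}(z)=\sum_w G^{(n)}_{i,j}(w,z)$ and $B^{(n)}_i(z)=n\sum_x H^{(n)}_i(x,z)$, what remains along the subsequence is
\begin{align*}
n^{-d}\sum_{i,j=1}^d\sum_{z\in\nZd}\nabla^{(n_{j_k})}_i u_{n_{j_k}}(z)\,\nabla^{(n_{j_k})}_j g(z)\,F^{(n_{j_k})}_{i,j}(z)+2n^{-d}\sum_{i=1}^d\sum_{z\in\nZd}\nabla^{(n_{j_k})}_i u_{n_{j_k}}(z)\,g(z)\,B^{(n_{j_k})}_i(z)+o(1).
\end{align*}
Here I would pass to the limit using: $\nabla^{(n)}_j R^{(n)}g\to\partial_j g$ uniformly; $F^{(n)}_{i,j}\to a_{i,j}$ in $L^2_{\loc}(\R^d)$ (convergence in $L^1_{\loc}$ together with the uniform $L^\infty$-bound); $B^{(n)}_i\to b_i$ in $L^2_{\loc}(\R^d)$ (interpolating the $L^1_{\loc}$-convergence against the uniform $L^{2\theta}_{\loc}$-bound, where $2\theta>2$ is used); and $\nabla^{(n_{j_k})}_i u_{n_{j_k}}\rightharpoonup\partial_i H$ in $L^2_{\loc}(\R^d)$. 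Pairing the weakly convergent factor against the strongly convergent one, the displayed quantity converges to $\int_{\R^d}a_{i,j}\,\partial_i H\,\partial_j g\,\d x+2\int_{\R^d}b_i\,\partial_i H\,g\,\d x=\cE(H,g)$, which is \eqref{eq:convergenceb}. Then \autoref{thm:convergence} gives the claim.

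\emph{Expected main difficulty.} The delicate point is the step in (a) that upgrades the uniform-on-compacts convergence supplied by \autoref{thm:tightness} to weak $H^1$-convergence of $E^{(n)}u_n$ and identifies the weak $L^2_{\loc}$-limit of the discrete gradients as $\nabla H$. This is precisely where comparability to the NNRW \eqref{eq:NNRWcomp} is indispensable --- it converts the a priori bound, which is only on $\cEns$ and need not see every coordinate direction of the discrete gradient, into uniform control of $\sum_i\Vert\nabla^{(n)}_i u_n\Vert_{L^2}^2$ --- and it is also where $\theta>d/2$ enters, keeping $B^{(n)}_i$ bounded in $L^{2\theta}_{\loc}$ with $2\theta>2$ so that the drift term survives the passage to the limit via $L^1_{\loc}\cap L^{2\theta}_{\loc}\hookrightarrow L^2_{\loc}$.
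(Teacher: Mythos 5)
Your proof is correct and follows the same overall plan as the paper's -- reduce to verifying (a), (b) of \autoref{thm:convergence}, use the G\r{a}rding inequality, resolvent estimate and NNRW-comparability for the uniform discrete $H^1$ bound in (a), and expand $\cEn(u_n,R^{(n)}g)$ via the $G^{(n)}_{i,j}$, $H^{(n)}_i$ representation in (b). Where you genuinely diverge is in how you kill the drift correction term and how you pass to the limit in the drift main term. The paper isolates $I^{(n)}_2 = 2n^{1-d}\sum_{i,z}\sum_x H^{(n)}_i(x,z)\nabla^{(n)}_i u_n(z)(g(x)-g(z))$, factors out $\nabla^{(n)}_i u_n(z)(g(x)-g(z))$ as $\bigl(\sup_{|z-z'|\le 1/n}|u_n(z)-u_n(z')|\bigr)\cdot O(\Vert\nabla g\Vert_\infty)$, and then invokes equicontinuity of $(u_n)$ (i.e.\ the Hölder estimate from \autoref{cor:regest}) to send this to $0$. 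You instead Cauchy--Schwarz $n^{-1}\cdot n\sum_x|H^{(n)}_i(x,z)|$ against $\nabla^{(n)}_i u_n(z)$, using the uniform $L^{2\theta}_{\loc}$ bound established in the remark after \autoref{ass:local} (whose proof really does bound the absolute-value sum, not just $|B^{(n)}_i|$) together with the $L^2$ gradient bound from step (a); this gives $|I^{(n)}_2|=O(n^{-1})$ without appealing to the Hölder estimate. For the main drift term, the paper extracts a further subsequence with $B^{(n)}_i\to b_i$ pointwise a.e.\ and invokes ``bounded'' convergence, while you interpolate $L^1_{\loc}$-convergence against the uniform $L^{2\theta}_{\loc}$ bound to get strong $L^2_{\loc}$-convergence and then pair weak against strong -- which is a cleaner and more explicit way to make the same conclusion rigorous. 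Both variations buy a slightly more self-contained argument for step (b), since neither relies on the equicontinuity of $u_n$ there (though you still need the Hölder estimates upstream, for \autoref{thm:tightness}).
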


\begin{proof}
The proof uses the same ideas as in \cite{BKU10}. We need to verify properties (a), (b) from \autoref{thm:convergence}. Let $f \in C_c(\R^d)$, $g \in C_c^2(\R^d)$, $(n_j) \subset \N$. We denote $u_{n_j} = U^{(n_j)}_{\lambda} R^{(n_j)} f$ and $H = \lim_{j \to \infty} E^{(n_j)} u_{n_j}$.
The proof of (a) works exactly as in \cite{BKU10}. Note that uniform boundedness of $(\cE^{(n_{j_k})}_{NN}(E^{(n_{j_k})}u_{n_{j_k}},E^{(n_{j_k})}u_{n_{j_k}}))_k$ follows from G\r{a}rding's inequality for $\cEn$ (see \eqref{eq:Garding-markov}), as well as \eqref{eq:resolventestimate}, and \eqref{eq:NNRWcomp}.

It therefore remains to show (b), i.e., that there exists a further subsequence $(n_{j_k}) \subset (n_j)$ such that $\cE^{(n_{j_k})}(u_{n_{j_k}},R^{(n_{j_k})} g) \to \cE(H,g), \text{ as } k \to \infty$. For simplicity of notation, we will assume that already $(u_n)$ converges. It was shown already in \cite{BKU10} that $\cEns(u_n,R^{(n)} g) \to \cE^{a_{i,j}}(H,g)$ converges along some subsequence. Note that \eqref{CTail} implies \eqref{eq:BKUA3}, i.e.,  assumption (A3) in \cite{BKU10}, which is required for their argument to work. Therefore, it remains for us to prove that also 
\begin{align}
\label{eq:CLTlhelp1}
\cEna(u_n,R^{(n)} g) \to \cE^{b_i}(H,g), \text{ as } n \to \infty
\end{align}
along some subsequence. Let us denote $K^{(n)} = \supp(g) \cap \nZd$. We can write
\begin{align*}
\cEna(u_n,R^{(n)} g) &= 2n^{2-d} \sum_{x \in \nZd} \sum_{y \in \nZd} (u_n(x)-u_n(y))g(x)\Cna(x,y)\\
&= 2n^{1-d} \sum_{i = 1}^d \sum_{z \in \nZd} \sum_{x \in K^{(n)} : \vert x-z \vert \le \frac{C}{n}} \nabla^{(n)}_i u_n(z) g(x) H^{(n)}_i(x,z)\\
&= 2n^{-d} \sum_{i = 1}^d\sum_{z \in \nZd} \Bigg(B^{(n)}_i(z)\nabla^{(n)}_i u_n(z)g(z)\\
&\qquad\qquad\qquad+ \sum_{x \in K^{(n)} : \vert x-z \vert \le \frac{C}{n}} n H^{(n)}_i(x,z)\nabla^{(n)}_i u_n(z) (g(x)-g(z)) \Bigg)\\
&=: I^{(n)}_1 + I^{(n)}_2.
\end{align*}
We separately analyze the summands $I^{(n)}_1$ and $I^{(n)}_2$. It will turn out that $I^{(n)}_1 \to \cE^{b_i}(H,g)$ (up to a subsequence), while $I^{(n)}_2 \to 0$, yielding \eqref{eq:CLTlhelp1}, as desired.\\
For $I^{(n)}_2$, we argue as follows:
\begin{align*}
\vert I^{(n)}_2\vert &= \left\vert2n^{1-d} \sum_{i = 1}^d\sum_{z \in \nZd}\sum_{x \in K^{(n)} : \vert x-z \vert \le \frac{C}{n}} H^{(n)}_i(x,z)\nabla^{(n)}_i u_n(z) (g(x)-g(z))\right\vert\\
&\le c_1\left(\sup_{\vert z-z'\vert \le \frac{1}{n}} \vert u_n(z) - u_n(z') \vert \right)\Vert \nabla g \Vert_{\infty} \left\vert n^{1-d} \sum_{i = 1}^d\sum_{z \in \nZd}\sum_{x \in K^{(n)} : \vert x-z \vert \le \frac{C}{n}} H^{(n)}_i(x,z)\right\vert,
\end{align*}
where $c_1 > 0$ depends only on $C$ and we used the definition of $\nabla^{(n)}_i u_n(z)$, and the mean value theorem for $g(x)-g(z)$, which is applicable since $x \in K^{(n)} : \vert x-z \vert \le C/n$ is close to $z$. (Note that the $n$ from $\nabla^{(n)}_i$ cancels with the $n^{-1}$ from $(g(x)-g(z)) \le Cn^{-1}\Vert \nabla g \Vert_{\infty}$). Finally:
\begin{align*}
&\left\vert n^{1-d}\sum_{i = 1}^d\sum_{z \in \nZd}\sum_{x \in K^{(n)} : \vert x-z \vert \le \frac{C}{n}} H^{(n)}_i(x,z)\right\vert\\
&= \left\vert n^{1-d} \sum_{i = 1}^d\sum_{z \in \nZd}\sum_{x \in K^{(n)} : \vert x-z \vert \le \frac{C}{n}}\sum_{y \in \nZd : \vert x-y \vert \le \frac{C}{n}} \hspace{-0.6cm} \left(P^{x,y}(z+e_i/n,z)-P^{x,y}(z,z+e_i/n) \right)\Cna(x,y)\right\vert\\
&\le n^{2-d}\sum_{x \in K^{(n)}}\sum_{y \in \nZd : \vert x-y \vert \le \frac{C}{n}} \vert x-y \vert \vert \Cna(x,y) \vert\\
&\le c_2 \left[n^{-d}\sum_{x \in K^{(n)}} \left(n^2\sum_{y: \vert x-y \vert \le \frac{C}{n}} \vert x-y \vert^2 \vert J^{(n)}(x,y) \vert\right)^{\frac{2\theta}{2\theta-1}}\right]^{\frac{2\theta-1}{2\theta}} \left\Vert n^2 \sum_{y: \vert \cdot-y \vert \le \frac{C}{n}} \frac{\vert \Cna(\cdot,y)\vert^2}{J^{(n)}(\cdot,y)} \right\Vert_{L^{\theta}(K^{(n)})}^{\frac{1}{2}}\\
&\le c_3 \mu^{(n)}(K^{(n)})^{\frac{2\theta-1}{2\theta}}
\end{align*}
for some $c_2, c_3 > 0$, where we used \eqref{eq:BKUhelp}, \eqref{eq:BKUA3} and \eqref{K1}. Note that the resulting term in the previous estimate is bounded uniformly in $n$ due to the fact that $\mu^{(n)}(K^{(n)}) \to \vert \supp(g) \vert < \infty$. 
Recall that due to (i) in \autoref{thm:tightness}, the family $(u_n)$ is equicontinuous and therefore $\sup_{\vert z-z'\vert \le \frac{1}{n}} \vert u_n(z) - u_n(z') \vert \to 0$ as $n \to \infty$. Consequently, $\vert I^{(n)}_2\vert \to 0$, as desired.\\
Now, we consider $I^{(n)}_1$. We write
\begin{align*}
I^{(n)}_1 = 2n^{-d} \sum_{i = 1}^d\sum_{z \in \nZd} B^{(n)}_i(z)\nabla^{(n)}_i u_n(z)g(z) = 2 \sum_{i=1}^d \int_{\R^d} B^{(n)}_i(z)\nabla^{(n)}_i E^{(n)}u_n(z)E^{(n)}R^{(n)}g(z) \d z.
\end{align*}
There exists a subsequence of $(E^{(n)}\nabla^{(n)}_i u_n(z))_n$ converging weakly in $L^2(\R^d)$ to $\partial_i H$. This follows from the proof of (a) in \cite{BKU10}. Moreover, we can extract a further subsequence along which $B^{(n)}_i \to b_i$ boundedly and pointwise almost everywhere and also $E^{(n)}R^{(n)}g \to g$ uniformly on compacts. Along this further subsequence, we have that
\begin{align*}
I^{(n)}_1 \to 2\int_{\R^d} b_i(z)\partial_i H (z) g(z) \d z = \cE^{b_i}(H,g).
\end{align*}
We have shown that properties (a), (b) of \autoref{thm:convergence} hold, which yields the desired result.
\end{proof}

\autoref{thm:CLTl} answers question (i) from the beginning for the case $\alpha = 2$ and bounded range conductances $\Cn$. Now, we want to address question (ii), namely provide explicit conductances $\Cn$ that satisfy the assumptions of \autoref{thm:CLTl} and approximate a process that corresponds to a previously given form of type \eqref{eq:limitforml}. The following example contains an explicit computation of $B^{(n)}_i$ in a special case:

\begin{example}
Let us assume that we are in the special situation that for a given $\Cn$, we have that $\Cna$ is of the following form:
\begin{align*}
\Cna(x,y) = \beta^{(n)}(x,y)NN(x,y),
\end{align*}
and $\beta^{(n)} : \nZd \times \nZd \to \R$ satisfies $\beta^{(n)}(x,y) = -\beta^{(n)}(y,x)$ for every $x,y \in \nZd$. In that case, it holds for $z \in \nZd$:
\begin{align}
\label{eq:Bnspecial}
\begin{split}
B^{(n)}_i(z) &= n\left(H^{(n)}(z+e_i/n,z) + H^{(n)}_i(z,z)\right)\\
&= n\left(P^{z+e_i/n,z}(z+e_i/n,z)\Cna(z+e_i/n,z)-P^{z,z+e_i/n}(z,z+e_i/n)\Cna(z,z+e_i/n)\right)\\
&= 2n\Cna(z+e_i/n,z)\\
&= n \beta^{(n)}(z+e_i/n,z).
\end{split}
\end{align}
\end{example}

\begin{theorem}[concrete approximation]
\label{thm:approxl}
Let $a_{i,j} : \R^d \to \R$ be uniformly elliptic and bounded and let $b_i : \R^d \to \R$ with $|b_i|^2 \in L^{\theta}(\R^d)$ for some $\theta \in (\frac{d}{2},\infty]$. Let $(\cE, H^1(\R^d))$ defined in \eqref{eq:limitforml} be the associated lower-bounded semi-Dirichlet form and $X$ be the associated Markov process.\\
Then, there exists a sequence $(\Cn)$ satisfying \autoref{ass:local} with $a_{i,j}$, $b_i$ such that \eqref{K1}, \eqref{K2}, \eqref{CTail}, \eqref{Poinc}, \eqref{Sob} hold true with $\alpha =2$ and $\theta$. Furthermore, $\Xn$ is comparable to the NNRW in the sense of \eqref{eq:NNRWcomp} and of bounded range.\\
As a consequence, for each $T > 0$,  $x \in \R^d$ and $(x_n) \subset \nZd$ with $x_n \to x$ the $\PP^{x_n}$-laws of the continuous time Markov chain $(\Xn_t)_{t \in [0,T]}$ corresponding to $(\cEn,L^2(\nZd))$ weakly converge to the $\PP^x$-law of $(X_t)_{t \in [0,T]}$ with respect to the topology of $D([0,T];\R^d)$.
\end{theorem}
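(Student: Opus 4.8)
The plan is to build $\Cn$ as a sum of a symmetric part $\Cns$ encoding the diffusion matrix $a_{i,j}$ and an antisymmetric part $\Cna$, supported on nearest neighbor edges, encoding the drift $b$; once this is done, it remains to check that $(\Cn)$ satisfies \autoref{ass:local} together with all the structural hypotheses of \autoref{thm:CLTl}, and then to invoke \autoref{thm:CLTl}. For the symmetric part I would fix $\eps>0$ so small that $a_{i,j}-\eps\delta_{i,j}$ is still uniformly elliptic and bounded, take a family of symmetric, bounded range conductances $C^{(n),0}_s$ on $\nZd$ produced by the known constructions for symmetric divergence form operators (see \cite{StZh97}, Section 5 in \cite{BKU10}, \cite{DeKu13}), which are comparable to the NNRW and satisfy \eqref{eq:suffCTailbdrange}, \eqref{Poinc}, \eqref{Sob}, with coefficients $F^{(n),0}_{i,j}\to a_{i,j}-\eps\delta_{i,j}$ in $L^1_{loc}(\R^d)$, and set $\Cns:=C^{(n),0}_s+\eps\, NN$. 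Since $F^{(n)}_{i,j}$ is linear in the symmetric conductance and $\eps\,NN$ contributes $\eps\delta_{i,j}$ in the limit (up to a fixed positive factor absorbed into $\eps$), one still has $F^{(n)}_{i,j}\to a_{i,j}$ in $L^1_{loc}(\R^d)$; crucially, $\Cns(x,y)\ge\eps/2=:c_0>0$ whenever $|x-y|=1/n$, and \eqref{eq:NNRWcomp}, \eqref{Poinc}, \eqref{Sob} as well as \eqref{CTail}, \eqref{CTail2} (via \eqref{eq:suffCTailbdrange} and the bounded range) are inherited.

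For the antisymmetric part I would let $\widetilde b^{(n)}_i$ be the average of $b_i$ over the cube $[x]_n+[0,1/n)^d$, truncated at level $M_n:=2Dc_0 n$ (with $D\in(0,1)$ fixed), and set $\Cna(x,y):=\beta^{(n)}(x,y)\,NN(x,y)$ with $\beta^{(n)}(z+e_i/n,z):=n^{-1}\widetilde b^{(n)}_i(z)$ extended antisymmetrically, and $\beta^{(n)}\equiv 0$ on all other pairs. Then $|\Cna|\le\tfrac12 n^{-1}M_n=Dc_0$ on nearest neighbor edges and $\Cna\equiv 0$ elsewhere, so $\Cn=\Cns+\Cna$ is a nonnegative conductance of bounded range whose symmetric part is $\Cns$. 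By the computation in the Example above (see \eqref{eq:Bnspecial}), $B^{(n)}_i=\widetilde b^{(n)}_i$, so the Lebesgue differentiation theorem (for the cube averages) combined with $M_n\to\infty$ (for the truncation error) gives $B^{(n)}_i\to b_i$ in $L^1_{loc}(\R^d)$, while Jensen's inequality bounds $\widetilde b^{(n)}_i$ in $L^{2\theta}$ uniformly by $\|b_i\|_{L^{2\theta}}$ (recall $|b_i|^2\in L^\theta$ means $b_i\in L^{2\theta}$), i.e. $|B^{(n)}_i|^2$ is bounded in $L^\theta$. Together with the given ellipticity and boundedness of $a_{i,j}$ this verifies \autoref{ass:local}.

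It then remains to check \eqref{K1} and \eqref{K2}, for which I would take $J^{(n)}=j^{(n)}=\Cns$. Since $|\Cna(x,y)|\le Dc_0\le D\Cns(x,y)$ on nearest neighbor edges and both sides vanish elsewhere, one gets $\Cn(x,y)\ge(1-D)\Cns(x,y)$, which with the trivial inequality $\cEns_{\Bn_{2r}(x_0)}\le\cE^{(n),j^{(n)}}_{\Bn_{2r}(x_0)}$ yields \eqref{K2}. The second inequality in \eqref{K1} is likewise trivial, and for the first one uses $\Cns\ge c_0$ on nearest neighbor edges to estimate, for each $z\in\nZd$,
\begin{align*}
n^2\sum_{y\in\nZd}\frac{|\Cna(z,y)|^2}{\Cns(z,y)}\le\frac{1}{4c_0}\sum_{i=1}^{d}\left(|\widetilde b^{(n)}_i(z)|^2+|\widetilde b^{(n)}_i(z-e_i/n)|^2\right),
\end{align*}
whose $L^\theta(\nZd)$-norm is controlled by $c_0^{-1}\sum_i\|\widetilde b^{(n)}_i\|_{L^{2\theta}}^2$, hence by a constant $A$ independent of $n$. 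Since \eqref{CTail} implies \eqref{eq:ass0s}, \autoref{lemma:markov-welldef} applies and all hypotheses of \autoref{thm:CLTl} are in place; the asserted weak convergence follows.

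The main obstacle is reconciling the two competing requirements: \eqref{K1} and \eqref{K2} force $\Cna$ to be uniformly in $n$ small compared with $\Cns$, while \autoref{ass:local} requires $B^{(n)}_i\to b_i$ even when $b$ is only in $L^{2\theta}$ and possibly unbounded. This is resolved by the $n$-dependent truncation level $M_n\asymp n$, which keeps $n^{-1}\widetilde b^{(n)}_i$ uniformly bounded (so $|\Cna|\le Dc_0$ for all $n$) yet has $M_n\to\infty$, so no mass of $b$ is lost in the limit. A secondary technical point is that the symmetric construction taken from the literature must be arranged so as to admit a uniform positive pointwise lower bound on nearest neighbor conductances; this is exactly what the shift $\Cns=C^{(n),0}_s+\eps\,NN$ achieves, at the harmless cost of approximating $a_{i,j}-\eps\delta_{i,j}$ rather than $a_{i,j}$ with $C^{(n),0}_s$.
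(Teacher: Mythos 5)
Your proposal is correct and closely follows the strategy of the paper's own proof: take a symmetric conductance from the known constructions for divergence-form generators, superpose a small nearest-neighbor antisymmetric conductance encoding $b$, verify \autoref{ass:local}, \eqref{K1}, \eqref{K2}, \eqref{CTail}, \eqref{Poinc}, \eqref{Sob}, the NNRW comparability and bounded range, and invoke \autoref{thm:CLTl}. The genuine technical difference is in how $\beta^{(n)}$ is defined. The paper averages $b_i$ over \emph{mesoscopic} cubes of side $r_n=\lfloor n^{1-\beta}\rfloor/n$, makes $b_i^{(n)}$ piecewise constant on them, and — because it indexes $\beta^{(n)}(x,y)=b_i^{(n)}(x)/n$ by the \emph{upper} endpoint, so antisymmetry forces piecewise constancy — must set $\beta^{(n)}=0$ on edges crossing cube faces, and then estimate the resulting cube-boundary error (the sets $BQ_i(x_0,r_n)$). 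You instead average over cubes of side $1/n$ and prescribe $\beta^{(n)}$ by its value on positively oriented edges, $\beta^{(n)}(z+e_i/n,z):=n^{-1}\widetilde b^{(n)}_i(z)$, extended antisymmetrically. This is unambiguously well-defined and antisymmetric, so $B^{(n)}_i=\widetilde b^{(n)}_i$ exactly by \eqref{eq:Bnspecial} and the boundary bookkeeping disappears entirely — a real simplification, and your verifications of \eqref{K1}, \eqref{K2}, \autoref{ass:local} all go through. Two small remarks: (i) since $b_i\in L^{2\theta}(\R^d)$ with $\theta>d/2$, the $1/n$-cube averages satisfy $\Vert\widetilde b^{(n)}_i\Vert_\infty\lesssim n^{d/(2\theta)}=o(n)$, so the truncation at level $M_n\asymp n$ is actually inactive for all sufficiently large $n$; the same is true in the paper ($\Vert b_i^{(n)}\Vert_\infty\lesssim n^{\beta d/(2\theta)}\ll n\eps$), so the truncation is a safety device rather than, as your closing paragraph suggests, the resolution of a genuine tension. (ii) The shift $\Cns=C^{(n),0}_s+\eps\,NN$ (and the harmless correction $F^{(n)}_{i,j}[NN]=\delta_{i,j}$) is a valid way to secure the uniform nearest-neighbor lower bound; the paper instead reads the bound off directly from the construction in \cite{DeKu13}, but both routes lead to the same place.
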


\begin{proof}
The proof can be seen as an extension of Theorem 5.5 in \cite{DeKu13} in the sense that we allow for the existence of an additional first order drift term. The idea of proof is the same.\\
We define $r_n = \lfloor n^{1-\beta}\rfloor/n \in n^{-1}\Z_{+}$ for some $\beta \in (0,1)$. Then $r_n \Z^d \subset \nZd$. For $x_0 \in r_n\Z^d$ we define the cube
\begin{align*}
Q(x_0,r_n) = \lbrace y \in \R^d : 0 \le \min_i (y_i - (x_0)_i) \le \max_i (y_i-(x_0)_i) < r_n \rbrace.
\end{align*}
Clearly, the set of all cubes $Q(x_0,r_n)$ covers the full space $\R^d$ and all cubes are pairwise disjoint.  Then, we set for $y \in \R^d$
\begin{align*}
&a^{(n)}_{i,j}(y) = \sum_{x_0 \in r_n \Z^d} \left(\dashint_{Q(x_0,r_n)}a_{i,j}(z) \d z \right) \mathbbm{1}_{Q(x_0,r_n)}(y),\\
&b^{(n)}_{i}(y) = \sum_{x_0 \in r_n \Z^d} \left(\dashint_{Q(x_0,r_n)}b_{i}(z) \d z \right) \mathbbm{1}_{Q(x_0,r_n)}(y).
\end{align*}
Moreover, $(a^{(n)}_{i,j}(y))$ is uniformly elliptic and bounded by $\Vert a_{i,j}\Vert_{\infty}$. By Jensen's inequality and the construction of the sets $Q(x_0,r_n)$:
\begin{align}
\label{eq:binnorm}
\begin{split}
\Vert|b^{(n)}_i|^2 \Vert_{L^{\theta}(\R^d)}^{\theta} &= \sum_{x_0 \in r_n \Z^d} \int_{Q(x_0,r_n)} \left(\dashint_{Q(x_0,r_n)}b_{i}(z) \d z \right)^{2\theta} \d y\\
&\le \sum_{x_0 \in r_n \Z^d} \dashint_{Q(x_0,r_n)} \int_{Q(x_0,r_n)} |b_{i}(z)|^{2\theta} \d z \d y \\
&= \Vert b_i^2 \Vert_{L^{\theta}(\R^d)}^{\theta}.
\end{split}
\end{align}
Therefore
$\Vert a^{(n)}_{i,j} - a_{i,j}\Vert_{L^1_{loc}(\R^d)} \to 0$, $\Vert b^{(n)}_i - b_i \Vert_{L^1_{loc}(\R^d)} \to 0$. Furthermore, $a^{(n)}_{i,j},b^{(n)}_i$ are piecewise constant functions in the sense that $a^{(n)}_{i,j}(x) = a^{(n)}_{i,j}([x]_{r_n})$, $b^{(n)}_i(x) = b^{(n)}_i([x]_{r_n})$ for every $x \in \nZd$. Note that it remains to find $\Cn$ such that $\Vert F^{(n)}_{i,j} - a^{(n)}_{i,j}\Vert_{L^1_{loc}(\R^d)} \to 0$ and $\Vert B^{(n)}_i -b^{(n)}_i\Vert_{L^1_{loc}(\R^d)} \to 0$ in order to verify \autoref{ass:local}.

Let us now define $\Cns$ according to the procedure laid out in \cite{DeKu13} such that $\Vert F^{(n)}_{i,j} - a^{(n)}_{i,j}\Vert_{L^1_{loc}(\R^d)} \to 0$ holds true. Note that from the construction in \cite{DeKu13} (Theorem 5.5), it follows that \eqref{CTail} is satisfied with $\alpha = 2$, $\Cns$ is of bounded range ($C = 2$) and there exists $\eps > 0$ such that for every $n \in \N$: $\Cns(x,y) \ge \eps$ if $\vert x-y \vert = \frac{1}{n}$. Since \eqref{Poinc} and \eqref{Sob} are satisfied for $\cE^{(n)}_{NN}$ (see \cite{Kum14}, \cite{Bar17}), it follows by \eqref{eq:NNRWcomp} that \eqref{Poinc} and \eqref{Sob} also hold true for $\cEns$ with $\alpha = 2, \sigma = \frac{1}{2n}$. 

It remains to find an antisymmetric sequence of conductances $\Cna$ such that the kernel $\Cn = \Cns + \Cna$ satisfies assumptions \eqref{K1}, \eqref{K2} and $\Vert B^{(n)}_i -b^{(n)}_i\Vert_{L^1_{loc}(\R^d)} \to 0$. We make the choice
\begin{align*}
\Cna(x,y) = \beta^{(n)}(x,y)NN(x,y)\mathbbm{1}_{\lbrace\vert\beta^{(n)}(x,y)\vert \le \eps \rbrace}(x,y),
\end{align*}
and $\beta^{(n)} : \nZd \times \nZd \to \R$ satisfies $\beta^{(n)}(x,y) = -\beta^{(n)}(y,x)$ for every $x,y \in \nZd$. According to \eqref{eq:Bnspecial}:
\begin{align*}
B^{(n)}_i(z) = n \beta^{(n)}(z+e_i/n,z)\mathbbm{1}_{\lbrace\vert\beta^{(n)}(z+e_i/n,z)\vert \le \eps \rbrace}(z+e_i/n,z), ~~z \in \nZd.
\end{align*}
We define 
\begin{align*}
\beta^{(n)}(x,y) =\begin{cases}
b^{(n)}_i(x)/n, ~~&\text{if } \exists x_0 \in \nZd ~\exists i \in \{1,\dots,d\} : x,y \in Q(x_0,r_n), x = y+e_i/n,\\
-b^{(n)}_i(x)/n, ~~&\text{if } \exists x_0 \in \nZd ~\exists i \in \{1,\dots,d\} : x,y \in Q(x_0,r_n), y = x+e_i/n,\\ 
0, ~~ &\text{else.}
\end{cases}
\end{align*}
Note that since $b^{(n)}_i$ is piecewise constant, $\beta^{(n)}$ is indeed antisymmetric. Furthermore, it follows:
\begin{align*}
B^{(n)}_i(z) = \begin{cases}
b^{(n)}_i(z)\mathbbm{1}_{\lbrace \vert b_i^{(n)}(z)\vert \le n\eps \rbrace}(z), ~~&\text{if } z,z+e_i/n \in Q(x_0,r_n) \text{ for some } x_0 \in r_n\Z^d,\\
0, ~~&\text{else.}
\end{cases}
\end{align*}
Note that by definition of $b_i^{(n)}$ and since $|b_i|^2 \in L^{\theta}(\R^d)$, there is $n_0 \in \N$ such that for every $n \ge n_0$ it holds that $\Vert B^{(n)}_i\Vert_{\infty} \le n\eps$ for every $i$.
We define $Q_i(x_0,r_n) = \lbrace z \in \nZd : z,z+e_i/n \in Q(x_0,r_n)\rbrace$ and $BQ_i(x_0,r_n) = Q(x_0,r_n) \setminus Q_i(x_0,r_n)$. Note that $\vert BQ_i(x_0,r_n) \vert = r_n^{d-1}n^{-1}$.
Consequently, for each compact set $K \subset \R^d$ and $n \ge n_0$ it holds by \eqref{eq:binnorm}:
\begin{align*}
\int_{K} \vert B^{(n)}_i(z) - b^{(n)}_i(z)\vert \d z &\le \Vert b^{(n)}_i\Vert_{L^{2\theta}(K)} \left\vert \bigcup_{x_0 \in r_n\Z^d : Q_i(x_0,r_n) \subset K}BQ_i(x_0,r_n)\right\vert^{1-\frac{1}{2\theta}}\\
&\le \Vert b_i \Vert_{L^{2\theta}(K)}\left[c(K) r_n^{-d}(r_n^{d-1}n^{-1})\right]^{1 - \frac{1}{2\theta}}\\
&= \Vert b_i^2 \Vert_{L^{\theta}(K)}^{\frac{1}{2}} \left[\frac{c(K)}{\lfloor n^{1-\beta}\rfloor}\right]^{1-\frac{1}{2\theta}} \to 0,
\end{align*}
where we used that the number of rectangles $Q_i(x_0,r_n)$ that are contained in $K$ can be bounded by $c(K)r_n^{-d}$, where $c(K) > 0$ is a constant depending on the diameter of $K$, only. It follows that $\Vert B^{(n)}_i - b^{(n)}_i\Vert_{L^1_{loc}(\R^d)} \to 0$, as desired.\\
Furthermore \eqref{K2} is satisfied by construction with $D = \frac{1}{2}$. In order to prove that \eqref{K1} holds true with $\theta$, we estimate for $n \in \N$, $x \in \nZd$:
\begin{align*}
n^2 \sum_{y \in \nZd} \frac{\vert \Cna(x,y)\vert^2}{\Cns(x,y)} &= n^2 \sum_{y \in \nZd : \vert x-y \vert = \frac{1}{n}} \frac{\vert \Cna(x,y)\vert^2}{\Cns(x,y)}\\
&\le  n^2 \sum_{y \in \nZd : \vert x-y \vert = \frac{1}{n}} \frac{\frac{1}{2n^2}| b_i^{(n)}(x)|^2}{\eps} \le c(\eps) | b_i^{(n)}(x)|^2
\end{align*}
for some $c = c(\eps) > 0$. This implies
\begin{align*}
\left\Vert n^2 \sum_{y \in \nZd} \frac{\vert \Cna(\cdot,y)\vert^2}{\Cns(\cdot,y)} \right\Vert_{L^{\theta}(\nZd)} \le c(\eps) \Vert (b_i^{(n)})^2 \Vert_{L^{\theta}(\nZd)} \le c(\eps) \Vert (b_i)^2 \Vert_{L^{\theta}(\R^d)} \lfloor n^{1-\beta}\rfloor^{-\frac{d}{\theta}}.
\end{align*}
This proves the desired result.
\end{proof}

Finally, we present a much simpler direct proof of \autoref{thm:approxl} in the case that the drift $b$ is of the form $b(x) = \nabla V(x)$ for a suitable function $V \in C^1(\R^d)$.

\begin{example}[concrete approximation: $b = \nabla V$]
Assume that $b = \nabla V$ for some $V \in C^1(\R^d) \cap L^{2\theta}(\R^d)$ for some $\theta \in (\frac{d}{2},\infty]$ and $a_{i,j}$ as before. Choose again $\Cns$ as in \cite{DeKu13} and let $\eps > 0$ be such that $\Cns(x,y) \ge \eps$ for every $x,y \in \nZd$ with $\vert x-y \vert = 1/n$. Now, we can set
\begin{align*}
\Cna(x,y) = (V(x)-V(y))NN(x,y)\mathbbm{1}_{\lbrace \vert V(x)-V(y) \vert \le \eps \rbrace}(x,y),
\end{align*}
and define $\Cn = \Cns + \Cna$. We claim that this choice of conductances $\Cn$ gives rise to a form $\cEn$ with associated Markov chains $(\Xn_t)_{t \in [0,T]}$ whose $\PP^{x_n}$-laws weakly converge towards the $\PP^x$-law of $(X_t)_{t \in [0,T]}$, which is associated to $\cE$ given as in \eqref{eq:limitforml} for every $(x_n) \subset \nZd$, $x \in \R^d$ with $x_n \to x$ and every $T > 0$.\\
In fact, one way to see this is to apply \autoref{thm:CLTl} as a black box, using the same line of arguments as in the proof of \autoref{thm:approxl} to verify the assumptions. Indeed for large enough $n$, according to \eqref{eq:Bnspecial}:
\begin{align*}
B^{(n)}_i(z) = n(V(z+e_i/n)-V(z)) = \nabla^{(n)}_i V(z),~~ z \in \nZd,
\end{align*}
from where one can directly read off that $\Cna$ satisfies \autoref{ass:local}. \eqref{K1}, \eqref{K2}, \eqref{CTail}, \eqref{Poinc}, \eqref{Sob} are immediate.\\
However, for this special case one can conveniently reprove \autoref{thm:CLTl}, simplifying some arguments along the way. Let us demonstrate how to obtain \eqref{eq:CLTlhelp1}. We compute for large enough $n$:
\begin{align*}
\cEna(u_n,R^{(n)} g) &= 2n^{2-d} \sum_{x \in \nZd} \sum_{y \in \nZd} (u_n(x)-u_n(y))g(x)(V(x)-V(y))NN(x,y)\\
&=n^{-d} \sum_{i = 1}^d \nabla^{(n)}_i u_n(x)g(x)\nabla^{(n)}_i V(x)\\
&+ n^{-d} \sum_{i = 1}^d \nabla^{(n)}_i u_n(x-e_i/n)g(x)\nabla^{(n)}_i V(x-e_i/n)\\
& \to 2 \int_{\R^d} \partial_i H(x) g(x) \partial_i V(x) \d x.
\end{align*}
The convergence can be justified by similar arguments as in the proof of \autoref{thm:CLTl}.
\end{example}

\subsection{Approximation of nonlocal forms}
In this section, we assume that \eqref{K1}, \eqref{K2}, \eqref{Poinc}, \eqref{Sob}, \eqref{CTail}, \eqref{CTail2} hold true with $\alpha \in (0,2)$, $\sigma > 0$ and $\theta \in (\frac{d}{\alpha},\infty]$. Moreover, we assume that
\begin{align}
\label{eq:extraassnl}
\sup_{n \in \N} \left\Vert n^{\alpha} \sum_{y \in \nZd : \vert \cdot-y \vert < r} \frac{\vert\Cna(\cdot,y)\vert^2}{J^{(n)}(\cdot,y)} \right\Vert_{L^{\theta}(\Omega \cap \nZd)} \to 0, \text{ as } r \searrow 0, 
\end{align}
for every compact set $\Omega \subset \R^d$. We will demonstrate at the end of this section that these assumptions are naturally satisfied for a huge class of conductances by providing some examples.\\
In this case, the limiting form will be governed by a jumping kernel $K : \R^d \times \R^d \to [0,\infty]$ and therefore be of nonlocal type. $K$ will be determined through the following property:

\begin{assumption}
\label{ass:nonlocal}
There exists $K : \R^d \times \R^d \to [0,\infty]$ such that
\begin{align}
\label{eq:alterass}
\int_{\mathcal{K}} f(x,y) n^{d+\alpha}\Cn([x]_n,[y]_n) \d y \d x \to \int_{\mathcal{K}} f(x,y)K(x,y) \d y \d x
\end{align}
for every compact set $\mathcal{K} \subset \R^d \times \R^d \setminus \diag$, $f \in C(\mathcal{K})$.
Moreover $K$ satisfies
\begin{align}
\label{eq:limitformKass}
&\sup_{x \in \R^d} \int_{\R^d} (1 \wedge \vert x-y \vert^2) K_s(x,y) \d y < \infty,~~  [v]_{H^{\alpha/2}(\R^d)}^2 \le C \cE^{K_s}(v,v), ~~ \forall v \in L^2(\R^d),\\
\label{eq:limitformKass2}
&\left\Vert \int_{\R^d} \frac{\vert K_a(\cdot,y)\vert^2}{J(\cdot,y)} \d y \right\Vert_{L^{\theta}(\R^d)} < \infty, ~~ \cE^J_B(v,v) \le C \cE^{K_s}_B(v,v), ~~ \forall v \in L^2(B), ~~ 
\end{align}
for some $C > 0$ and some symmetric jumping kernel $J$ and for every ball $B \subset \R^d$.
\end{assumption}

In analogy with the discrete case, we set $K_s(x,y)=\frac{1}{2}(K(x,y)+K(y,x))$, $K_a(x,y)=\frac{1}{2}(K(x,y)-K(y,x))$. Note that \autoref{ass:nonlocal} is sufficient for $(\cE,V(\R^d|\R^d))$ defined by
\begin{align}
\label{eq:limitformnl}
\cE(f,g) = 2\int_{\R^d} \int_{\R^d} (f(x)-f(y))g(x) K(x,y) \d y \d x
\end{align}
to be a regular lower-bounded semi-Dirichlet form on $L^2(\R^d)$ with
\begin{align*}
V(\R^d|\R^d) = \left\lbrace v \in L^2(\R^d) : (v(x)-v(y))K_s^{1/2}(x,y) \in L^2(\R^d \times \R^d)\right\rbrace
\end{align*}
and having $C_c^{lip}(\R^d)$ as a core (see \cite{ScWa15}). Moreover, \eqref{eq:limitformKass2} is a continuous version of \eqref{K1}.

\begin{remark}
Note that \eqref{eq:alterass} implies that for every sequence $(f_n),f \subset C(\mathcal{K})$ with $f_n \to f$ uniformly on $\mathcal{K}$:
\begin{align*}
\int_{\mathcal{K}} f_n(x,y) n^{d+\alpha}\Cn([x]_n,[y]_n) \d y \d x \to \int_{\mathcal{K}} f(x,y)K(x,y) \d y \d x.
\end{align*}
\end{remark}

We present the following analog of \autoref{thm:CLTl} in the case $\alpha \in (0,2)$:

\begin{theorem}[central limit theorem]
\label{thm:CLTnl}
Assume that \eqref{K1}, \eqref{K2}, \eqref{Poinc}, \eqref{Sob}, \eqref{CTail}, \eqref{CTail2} hold true with $\alpha \in (0,2)$, $\sigma > 0$ and $\theta \in (\frac{d}{\alpha},\infty]$. Moreover, assume that there is $K$ such that \autoref{ass:nonlocal} holds true. Let $(x_n) \subset \nZd$, $x \in \R^d$ with $x_n \to x$ and $T > 0$. Then, the $\PP^{x_{n}}$-laws of $(\Xn_t)_{t \in [0,T]}$ converge weakly, with respect to the $D([0,T];\R^d)$-topology to the $\PP^x$-law of $(X_t)_{t \in [0,T]}$, where $(X,\PP^x)$ is the Markov process corresponding to the form $(\cE,V(\R^d|\R^d))$.
\end{theorem}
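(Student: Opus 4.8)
The plan is to apply \autoref{thm:convergence} to $(\cE,V(\R^d|\R^d))$ from \eqref{eq:limitformnl} with core $\mathcal{F}_0=C_c^{lip}(\R^d)$: by \autoref{ass:nonlocal} and \cite{ScWa15} this is a regular lower bounded semi-Dirichlet form, and a G\r{a}rding inequality $\cE_{\lambda_0}\ge 0$ (after enlarging $\lambda_0$ so that it also dominates the constant of \autoref{lemma:elementary}) follows from \eqref{eq:limitformKass2} by repeating the argument of \autoref{lemma:elementary} in the continuum. It then suffices to verify conditions (a) and (b) of \autoref{thm:convergence}. So fix $\lambda>\lambda_0$, $f\in C_c(\R^d)$, $g\in C_c^{lip}(\R^d)$ and a subsequence $(n_j)$ along which $E^{(n_j)}u_{n_j}$ converges uniformly on compact sets to some $H$, where $u_n:=U^{(n)}_{\lambda}R^{(n)}f$. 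Then $\|u_n\|_{L^\infty(\nZd)}\le\lambda^{-1}\|f\|_\infty$, and combining \eqref{eq:resolventestimate} with \eqref{eq:Garding-markov} gives $\sup_n\cEns(u_n,u_n)<\infty$; we shall use repeatedly the elementary identity $\cEns(v,v)=\int_{\R^d}\int_{\R^d}(E^{(n)}v(x)-E^{(n)}v(y))^2\,n^{d+\alpha}\Cns([x]_n,[y]_n)\,\d y\,\d x$ for $v\in L^2(\nZd)$, which rewrites the discrete forms in the shape of \eqref{eq:alterass}.

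For (a), test this identity against $\phi\in C_c(\R^d\times\R^d\setminus\diag)$ with $0\le\phi\le1$. Since $\phi(x,y)(E^{(n)}u_n(x)-E^{(n)}u_n(y))^2\to\phi(x,y)(H(x)-H(y))^2$ uniformly, the remark after \autoref{ass:nonlocal} (applied, after symmetrising, also with the roles of the two arguments swapped) yields $\int\phi(x,y)(H(x)-H(y))^2K_s(x,y)\,\d x\,\d y\le\sup_n\cEns(u_n,u_n)<\infty$. Taking the supremum over such $\phi$ shows $(H(x)-H(y))K_s^{1/2}(x,y)\in L^2(\R^d\times\R^d)$, while Fatou's lemma together with $\sup_n\|u_n\|_{L^2(\nZd)}<\infty$ gives $H\in L^2(\R^d)$; hence $H\in V(\R^d|\R^d)$, which is (a).

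For (b), decompose $\cEn(u_n,R^{(n)}g)=\cEns(u_n,R^{(n)}g)+\cEna(u_n,R^{(n)}g)$ and, for $\delta\in(0,1)$, split each term into its contributions from $|x-y|\le\delta$ and from $|x-y|>\delta$. The near-diagonal symmetric part is $O(\delta^{(2-\alpha)/2})$ uniformly in $n$, by Cauchy--Schwarz, $|g(x)-g(y)|\le L|x-y|$, the energy bound and \eqref{eq:BKUA3} --- this is the one place $\alpha<2$ enters. The near-diagonal antisymmetric part is estimated by Cauchy--Schwarz against $J^{(n)}$: one factor is $\le(A\,\cEns(u_n,u_n))^{1/2}$ by \eqref{K1} localised to a ball containing $\supp g$, and the other is $\le\|g\|_\infty\big(n^{-d}\sum_{x\in\supp g\cap\nZd}n^\alpha\sum_{0<|x-y|<\delta}|\Cna(x,y)|^2/J^{(n)}(x,y)\big)^{1/2}$, which by H\"older's inequality and \eqref{eq:extraassnl} tends to $0$ as $\delta\searrow0$, uniformly in $n$. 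For fixed $\delta$, the far-from-diagonal parts converge along a further subsequence as $n\to\infty$: after discarding the tails $|x-y|>M$ (uniformly small by \eqref{CTail2}, resp.\ \eqref{eq:limitformKass}), on the compact region $\delta<|x-y|<M$ one combines the uniform-on-compacts convergences $E^{(n)}u_n\to H$ and $E^{(n)}R^{(n)}g\to g$ with the weak convergence $n^{d+\alpha}\Cn([x]_n,[y]_n)\to K(x,y)$ from \eqref{eq:alterass} (hence also $n^{d+\alpha}\Cns([x]_n,[y]_n)\to K_s(x,y)$ and $n^{d+\alpha}\Cna([x]_n,[y]_n)\to K_a(x,y)$). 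Passing to the limit in $n$ and then letting $\delta\searrow0$ --- the near-diagonal part of the limiting form being controlled by $H\in V(\R^d|\R^d)$ and the integrability bounds \eqref{eq:limitformKass}--\eqref{eq:limitformKass2} --- gives $\cEn(u_{n_{j_k}},R^{(n_{j_k})}g)\to\cE(H,g)$, which is (b). \autoref{thm:convergence} then yields the assertion.

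The step I expect to be the main obstacle is the near-diagonal behaviour of the antisymmetric form $\cEna(u_n,R^{(n)}g)$. Unlike for its symmetric counterpart, the relevant factor $g(x)+g(y)$ does not vanish on the diagonal, so its smallness cannot be read off from Lipschitz regularity of $g$ together with the mild singularity of $\Cns$; it must instead be extracted from the conductances themselves, which is precisely the purpose of the uniform smallness hypothesis \eqref{eq:extraassnl} in conjunction with \eqref{K1}. Compared with \autoref{thm:CLTl}, this step replaces the analysis of the discrete drifts $B^{(n)}_i$, while the possibly nonsymmetric limiting kernel $K$ is recovered directly from the weak convergence \eqref{eq:alterass}.
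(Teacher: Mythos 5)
Your proposal is correct and follows essentially the same route as the paper: verify (a) and (b) of \autoref{thm:convergence} by passing $\cEns(u_n,u_n)$ and $\cEn(u_n,R^{(n)}g)$ to the limit via the weak convergence of \eqref{eq:alterass} on a middle annulus, handle the far range with \eqref{CTail2}, and control the near-diagonal symmetric part by \eqref{eq:BKUA3} and the near-diagonal antisymmetric part by \eqref{K1} together with \eqref{eq:extraassnl}. The only cosmetic difference is that in step (a) you test against $\phi\in C_c(\R^d\times\R^d\setminus\diag)$ rather than integrate over compact $\Omega$ and annuli $N^{-1}\le|x-y|\le N$ as the paper does, and in (b) you introduce two parameters $\delta,M$ in place of the single $N,N^{-1}$; both are equivalent.
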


\begin{proof}
The proof uses the same ideas as in \cite{HuKa07} and \cite{BKU10}. We need to verify properties (a), (b) from \autoref{thm:convergence}. Let $f \in C_c(\R^d)$, $g \in C_c^{lip}(\R^d)$, $(n_j) \subset \N$. We denote $u_{n_j} = U^{(n_j)}_{\lambda} R^{(n_j)} f$ and $H = \lim_{j \to \infty} E^{(n_j)} u_{n_j}$ (see  \autoref{thm:tightness} (i)), where the convergence holds true uniformly on compacts. As in the proof of \autoref{thm:CLTl}, we assume for simplicity that already $(E^{(n)} u_n)_n$ converges to $H$.\\
First, we infer from G\r{a}rding's inequality for $\cEn$ (see \eqref{eq:Garding-markov}) and \eqref{eq:resolventestimate} that the family $(\cEns(u_n,u_n))_n$ is uniformly bounded. In order to prove that $H \in V(\R^d|\R^d)$, we proceed similar to \cite{BKU10}. By \autoref{ass:nonlocal} and equicontinuity of $(u_n)_n$ (see \autoref{cor:regest}), as well as the convergence $E^{(n)}u_n \to H$, we obtain for every compact set $\Omega \subset \R^d$ 
\begin{align*}
\int_{\Omega} &\int_{\Omega \cap \{y : N^{-1} \le\vert x-y \vert \le N\}} (H(x)-H(y))^2 K_s(x,y)\d y \d x\\
&= \int_{\Omega} \int_{\Omega \cap \{y : N^{-1} \le\vert x-y \vert \le N\}} (H(x)-H(y))^2 K(x,y)\d y \d x\\
&\le \lim_{n \to \infty} n^{\alpha-d} \sum_{x \in \nZd} \sum_{y \in \nZd : N^{-1} \le \vert x-y \vert \le N} (u_n(x)-u_n(y))^2 \Cn(x,y)\\
&= \limsup_{n \to \infty} n^{\alpha-d} \sum_{x \in \nZd} \sum_{y \in \nZd : N^{-1} \le \vert x-y \vert \le N} (u_n(x)-u_n(y))^2 \Cns(x,y)\\
&\le \limsup_{n \to \infty} \cEns(u_n,u_n) < \infty.
\end{align*} 
Therefore, by taking $N \nearrow \infty$, and approximating $\Omega \nearrow \R^d$, we obtain that $\cEs(H,H) < \infty$. As $H \in L^2(\R^d)$ is an immediate consequence of the convergence $E^{(n)}u_n \to H$ and \eqref{eq:resolventestimate}, we infer that indeed $H \in V(\R^d|\R^d)$.

It remains to prove that there exists a subsequence $(n_k) \subset \N$ such that $\cE^{(n_k)}(u_{n_k},R^{(n_k)} g) \to \cE(H,g)$. We observe that for every $N > 1$:
\begin{align*}
2n^{\alpha-d} \sum_{x \in \nZd} &\sum_{y \in \nZd : N^{-1 \le }\vert x-y \vert \le N} (u_n(x)-u_n(y))g(x)\Cn(x,y)\\
&\to \int_{\R^d}\int_{\lbrace N^{-1} \le \vert x-y \vert \le N \rbrace} (H(x)-H(y))g(x) K(x,y) \d y \d x,
\end{align*}
which is due to \autoref{ass:nonlocal} and the fact that $(u_n)$ is equicontinuous and equibounded by (i) of \autoref{thm:tightness} and converges to $H$ uniformly on compacts.\\
It remains to show that the quantities
\begin{align}
\label{eq:CLTnlhelp0}
&\int_{\R^d}\int_{\R^d \cap \{y : \vert x-y \vert \not\in [N^{-1},N]\}} (H(x)-H(y))g(x)K(x,y) \d y \d x,\\
\label{eq:CLTnlhelp1}
&2n^{\alpha-d} \sum_{x \in \nZd} \sum_{y \in \nZd :\vert x-y \vert \not\in [N^{-1},N]} (u_n(x)-u_n(y))g(x)\Cn(x,y)
\end{align}
can be made arbitrarily small by choosing $N$ large enough. For \eqref{eq:CLTnlhelp0} this immediately follows from $\cEs(H,H) < \infty$ and \eqref{eq:limitformKass2}. In order to estimate \eqref{eq:CLTnlhelp1}, we denote  $K^{(n)} = \supp(g) \cap \nZd$ and let $M > 0$ such that $\{x \in \nZd : \dist(x, K^{(n)}) < N^{-1}\} \subset \Bn_M$. On the one hand,
\begin{align*}
2n^{\alpha-d} &\sum_{x \in K^{(n)}} \sum_{y \in \nZd :\vert x-y \vert > N} (u_n(x)-u_n(y))g(x)\Cn(x,y)\\
&\le c_1\cEns(u_n,u_n)^{1/2} \Vert g \Vert_{L^2(K^{(n)})} \left( n^{\alpha} \sup_{x \in K^{(n)}}\sum_{y \in \nZd : \vert x-y \vert > N} \Cn(x,y) \right)^{1/2}\\
&\le c_2 N^{-\delta/2} \cEns(u_n,u_n)^{1/2} \Vert g \Vert_{L^2(K^{(n)})},
\end{align*}
where $c_1,c_2 > 0$ are constants, and we used \eqref{CTail2} in the last step. Note that the quantities $\cEns(u_n,u_n)$ and $\Vert g \Vert_{L^2(K^{(n)})}$ are uniformly bounded in $n$. One the other hand,
\begin{align*}
2n^{\alpha-d} &\sum_{x \in \nZd} \sum_{y:\vert x-y \vert < N^{-1}} (u_n(x)-u_n(y))g(x)\Cns(x,y)\\
&=n^{\alpha-d} \sum_{x \in \nZd} \sum_{y:\vert x-y \vert < N^{-1}} (u_n(x)-u_n(y))(g(x)-g(y))\Cns(x,y)\\
&\le c_3 \cEns(u_n,u_n)^{\frac{1}{2}} \left(n^{\alpha-d} \sum_{x \in \Bn_M} \sum_{y:\vert x-y \vert < N^{-1}} (g(x)-g(y))^2\Cns(x,y)\right)^{\frac{1}{2}}\\
&\le c_4 \cEns(u_n,u_n)^{\frac{1}{2}} \Vert \nabla g\Vert_{L^\infty(\R^d)} \mu^{(n)}(\Bn_M)^{\frac{1}{2}} \left( n^{\alpha} \hspace{-0.1cm} \sup_{x \in \Bn_M} \sum_{y:\vert x-y \vert < N^{-1}} \hspace{-0.4cm}\vert x-y \vert^2 \Cns(x,y)\right)^{\frac{1}{2}}\\
&\le c_5 N^{(\alpha-2)/2} \cEns(u_n,u_n)^{\frac{1}{2}} \Vert \nabla g\Vert_{L^\infty(\R^d)} \mu^{(n)}(\Bn_M)^{\frac{1}{2}},
\end{align*}
where $c_3,c_4,c_5 > 0$ are constants, and we applied \eqref{eq:BKUA3} in the last step. Again, this quantity can be made arbitrarily small by choosing $N$ large since $\cEns(u_n,u_n)$ and $\mu^{(n)}(\Bn_M)$ are bounded in $n$. Finally, the contribution of $\Cna$ to \eqref{eq:CLTnlhelp1} can be estimated as follows:
\begin{align*}
2n^{\alpha-d} &\sum_{x \in \nZd} \sum_{y:\vert x-y \vert < N^{-1}} (u_n(x)-u_n(y))g(x)\Cna(x,y)\\
&\le c_6 \cE^{(n),J^{(n)}}(u_n,u_n)^{1/2} \left( n^{\alpha-d} \sum_{x \in K^{(n)}} \sum_{y:\vert x-y \vert < N^{-1}} g^2(x)\frac{\vert\Cna(x,y)\vert^2}{J^{(n)}(x,y)} \right)^{1/2} \\
&\le c_7 \cEns(u_n,u_n)^{1/2} \Vert g \Vert_{L^{2\theta'}(K^{(n)})} \left\Vert n^{\alpha} \sum_{y: \vert \cdot -y \vert < N^{-1}} \frac{\vert\Cna(\cdot,y)\vert^2}{J^{(n)}(\cdot,y)} \right\Vert_{L^{\theta}(K^{(n)})}^{1/2}
\end{align*}
for $c_6,c_7 > 0$. The right hand side can be made arbitrarily small by taking $N$ large due to \eqref{eq:extraassnl} and by uniform boundedness of $\cEns(u_n,u_n)$ and $\Vert g \Vert_{L^{2\theta'}(K^{(n)})}$ in $n$.\\
Consequently, the quantity in \eqref{eq:CLTnlhelp1} can be made arbitrarily small and we conclude the proof.
\end{proof}

We define $\vert h \vert_{\infty} = \max_{i \in \{1,\dots,d\}} \vert h_i \vert$.
As in the previous subsection, we provide a result on concrete approximation of a given process in the nonlocal case. These processes can be regarded as $\alpha$-stable like processes with a nonlocal drift term. We refer to \cite{KaWe22} for a more detailed discussion of the corresponding generator and its associated bilinear form. For simplicity, we consider only the special case, where $K(x,y) \asymp |x-y|^{-d-\alpha}$. However, note that an extension to more general kernels is straightforward provided that Sobolev -- and Poincar\'e inequalities hold true for $K_s$ and are inherited to $\Cns$.

\begin{theorem}[concrete approximation]
\label{thm:approxnl}
Let $\alpha \in (0,2)$, $\theta \in (\frac{d}{\alpha},\infty]$, and $K : \R^d \times \R^d \to [0,\infty]$ be such that there exist $\Lambda,C > 0$ with
\begin{align}
\label{eq:capproxnlass1}
&\Lambda^{-1} \vert x-y \vert^{-d-\alpha} \le K(x,y) \le \Lambda\vert x-y \vert^{-d-\alpha}, ~~ \forall x,y \in \R^d\\
\label{eq:capproxnlass2}
&\left\Vert\int_{\R^d} \frac{\vert K_a(\cdot,y)\vert^2}{K_s(\cdot,y)} \d y \right\Vert_{L^{\theta}(\R^d)} \le C, ~~ \left\Vert \int_{\R^d \cap \{y : \vert \cdot-y \vert < r\}} \frac{\vert K_a(\cdot,y)\vert^2}{K_s(\cdot,y)} \d y \right \Vert_{L^{\theta}(\R^d)} \to 0 \text{ as } r \searrow 0.
\end{align}
Let $(\cE,H^{\alpha/2}(\R^d))$ defined in \eqref{eq:limitformnl} be the associated regular lower-bounded semi-Dirichlet form on $L^2(\R^d)$ and $X$ be the associated Markov process. Then, there exists a sequence $(\Cn)_n$ satisfying \autoref{ass:nonlocal} with $K$ such that \eqref{K1}, \eqref{eq:extraassnl}, \eqref{K2}, \eqref{CTail}, \eqref{CTail2}, \eqref{Poinc}, \eqref{Sob} hold true with $\alpha$, $\sigma = 2\sqrt{d}$ and $\theta$.\\
As a consequence, for each $T > 0$,  $x \in \R^d$ and $(x_n) \subset \nZd$ with $x_n \to x$ the $\PP^{x_n}$-laws of the continuous time Markov chain $(\Xn_t)_{t \in [0,T]}$ corresponding to $(\cEn,L^2(\nZd))$ weakly converge to the $\PP^x$-law of $(X_t)_{t \in [0,T]}$ with respect to the topology of $D([0,T];\R^d)$.
\end{theorem}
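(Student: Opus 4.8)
The plan is to define the conductances by a cube-averaging procedure analogous to the one used in \autoref{thm:approxl} and in \cite{HuKa07}, and then verify \autoref{ass:nonlocal} together with the structural assumptions \eqref{K1}, \eqref{K2}, \eqref{eq:extraassnl}, \eqref{CTail}, \eqref{CTail2}, \eqref{Poinc}, \eqref{Sob}. First I would set $\Cns(x,y) = \dashint_{Q(x,n^{-1})}\dashint_{Q(y,n^{-1})} K_s(z,w)\,\d w\,\d z$ for $x\neq y$ with $|x-y|_\infty \ge 2\sqrt d/n$, and $\Cns(x,y)=0$ otherwise (the $\sigma = 2\sqrt d$ in the statement precisely reflects this truncation of short edges, which is harmless for $\alpha<2$). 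The antisymmetric part is defined the same way from $K_a$, i.e.\ $\Cna(x,y) = \dashint\dashint K_a(z,w)$, so that $\Cn = \Cns+\Cna$ is the cube-average of $K$; antisymmetry of $\Cna$ follows from that of $K_a$ together with symmetry of the averaging. With this choice, \eqref{eq:alterass} in \autoref{ass:nonlocal} follows because $n^{d+\alpha}\Cn([x]_n,[y]_n)$ is the average of $K$ over shrinking cubes times $n^\alpha$, and the $\alpha$-homogeneity of the bound \eqref{eq:capproxnlass1} makes the scaling work out; continuity of $K$ away from the diagonal is not assumed, so I would instead argue via \eqref{eq:capproxnlass1} that the averaged kernels are uniformly comparable to $|x-y|^{-d-\alpha}$ and pass to the limit in \eqref{eq:alterass} using dominated convergence against $f\in C(\mathcal K)$.

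Next I would verify the tail and coercivity assumptions. Estimate \eqref{CTail}, \eqref{CTail2} follow directly from $\Cns(x,y)\le c|x-y|^{-d-\alpha}$ (a consequence of \eqref{eq:capproxnlass1}) by the standard dyadic decomposition $\Bn_r(x)^c = \bigcup_k (\Bn_{2^{k+1}r}\setminus\Bn_{2^k r})$, giving the rates $r^{-\alpha}$ for $r\le 1$ and $r^{-\delta}$ with $\delta=\alpha$ for $r>1$. For \eqref{Poinc} and \eqref{Sob}: since $\Cns(x,y)\asymp |x-y|^{-d-\alpha}$ uniformly for $|x-y|_\infty\ge 2\sqrt d/n$, the form $\cEns$ is comparable (on the relevant scales) to the discrete $\alpha/2$-Dirichlet form on $\nZd$, for which Poincaré and Sobolev inequalities are classical (see \cite{Kum14}, \cite{Bar17}); one also uses the assumed Sobolev inequality $[v]^2_{H^{\alpha/2}}\le C\cE^{K_s}(v,v)$ from \eqref{eq:limitformKass} transferred to the discrete level via the comparison. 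Assumption \eqref{K2} holds with $j^{(n)}=\Cns$ and $D$ any fixed constant in $(0,1)$ because $\Cn = \Cns+\Cna$ and $|\Cna|\le (1-D)\Cns$ pointwise follows from $|K_a|\le (1-D)K_s$ — but wait, this pointwise domination is not assumed, so instead I would take $j^{(n)}=\Cns$ and observe that \eqref{K2} only requires $\cEns_B \le C\cE^{(n),\Cns}_B$, which is trivial, together with $\Cn(x,y)\ge (1-D)\Cns(x,y)$; the latter needs $\Cns(x,y) - |\Cna(x,y)| \ge (1-D)\Cns(x,y)$, i.e.\ $|\Cna|\le D\Cns$, which does follow after averaging from the local coercivity implicit in \eqref{eq:capproxnlass2} on each ball — this point requires care and I would derive it from \eqref{eq:capproxnlass2} via Cauchy--Schwarz on the cube.

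Finally, for \eqref{K1} and \eqref{eq:extraassnl} I would take $J^{(n)}(x,y) = \dashint\dashint K_s(z,w)\,\d w\,\d z = \Cns(x,y)$ and use Jensen's inequality: $|\Cna(x,y)|^2/\Cns(x,y) \le \dashint\dashint |K_a(z,w)|^2/K_s(z,w)\,\d w\,\d z$, so that
\begin{align*}
n^\alpha\sum_{y\in\nZd}\frac{|\Cna(x,y)|^2}{\Cns(x,y)} \le c\,n^{\alpha+d}\int_{\R^d}\dashint_{Q(x,n^{-1})}\frac{|K_a(z,w)|^2}{K_s(z,w)}\,\d z\,\d w \le c\,\dashint_{Q(x,n^{-1})}\int_{\R^d}\frac{|K_a(z,w)|^2}{K_s(z,w)}\,\d w\,\d z,
\end{align*}
whence $\big\Vert n^\alpha\sum_y |\Cna(\cdot,y)|^2/\Cns(\cdot,y)\big\Vert_{L^\theta(\nZd)} \le c\big\Vert \int_{\R^d} |K_a(\cdot,y)|^2/K_s(\cdot,y)\,\d y\big\Vert_{L^\theta(\R^d)} \le cC$ by \eqref{eq:capproxnlass2} and the contractivity of cube-averaging on $L^\theta$; the second inequality in \eqref{K1} is automatic with $J^{(n)}=\Cns$. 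The same computation restricted to $|x-y|<r$ yields \eqref{eq:extraassnl} from the second part of \eqref{eq:capproxnlass2}. Once all assumptions are checked, the conclusion is immediate from \autoref{thm:CLTnl}. The main obstacle I expect is the verification of the ellipticity-type assumption \eqref{K2}, specifically extracting the pointwise bound $|\Cna|\le D\Cns$ (with $D<1$) for the averaged kernels from the integrated condition \eqref{eq:capproxnlass2} rather than from a pointwise hypothesis on $K$; this may force an additional mild assumption relating $K_a$ and $K_s$ pointwise, or a more delicate argument using that $K_s\asymp|x-y|^{-d-\alpha}$ is non-degenerate so that after averaging the antisymmetric part remains strictly dominated.
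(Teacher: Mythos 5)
Your construction and the main Jensen argument for \eqref{K1}, \eqref{eq:extraassnl} are the same as the paper's (the paper sets $\Cn(x,y) = 4^d n^{d-\alpha}\int\int K(w,z)\,\d w\,\d z\,\mathbbm{1}_{\{|x-y|_\infty\ge 2/n\}}$, so $\Cns$, $\Cna$ are exactly the cube averages of $K_s$, $K_a$, and then applies Jensen and the $L^\theta$-contractivity of averaging, just as you do). The convergence \eqref{eq:alterass} is handled in the paper by invoking the $L^1_{\rm loc}$ convergence of cube averages from Theorem 2.3 in \cite{HuKa07}, which matches your dominated-convergence / Lebesgue-differentiation plan; no continuity of $K$ is needed there.

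However, the ``main obstacle'' you flag in \eqref{K2} is not an obstacle: it is already resolved by \eqref{eq:capproxnlass1}, and no additional pointwise hypothesis relating $K_a$ and $K_s$ is required. The point you miss is that \eqref{eq:capproxnlass1} is a two-sided bound on the \emph{full} kernel $K(x,y)$, not merely on $K_s$. Two ways to exploit this. First, since $K(x,y)\ge\Lambda^{-1}|x-y|^{-d-\alpha}$ pointwise, the averaged $\Cn$ (not just $\Cns$) satisfies $\Cn(x,y)\ge c_1|nx-ny|^{-d-\alpha}$ and $\Cns(x,y)\le c_2|nx-ny|^{-d-\alpha}$ whenever $|x-y|_\infty\ge 2/n$; so one may take $j^{(n)}(x,y)$ proportional to $|nx-ny|^{-d-\alpha}\mathbbm{1}_{\{|x-y|_\infty\ge 2/n\}}$, and both inequalities in \eqref{K2} are immediate for suitable $D<1$, $C>0$. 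Second, if you prefer to keep $j^{(n)}=\Cns$: writing $a=K(x,y)$, $b=K(y,x)$, both lying in $[\Lambda^{-1},\Lambda]\cdot|x-y|^{-d-\alpha}$ by \eqref{eq:capproxnlass1}, one has
\begin{align*}
\frac{|K_a(x,y)|}{K_s(x,y)} = \frac{|a-b|}{a+b} \le \frac{\Lambda-\Lambda^{-1}}{\Lambda+\Lambda^{-1}} = \frac{\Lambda^2-1}{\Lambda^2+1} =: D < 1,
\end{align*}
a pointwise estimate valid for all $x\ne y$. Averaging over cubes preserves it: $|\Cna(x,y)|\le D\,\Cns(x,y)$, so $\Cn\ge(1-D)\Cns$ and \eqref{K2} holds with $j^{(n)}=\Cns$. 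Either route closes \eqref{K2} cleanly; you should delete the suggestion that a further pointwise assumption might be forced, since \eqref{eq:capproxnlass1} alone does the job. With that gap filled, your argument and the paper's are essentially identical.
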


\begin{proof}
We define for $x,y \in \nZd$:
\begin{align*}
\Cn(x,y) = 4^d n^{d-\alpha} \int_{\lbrace\vert x-w\vert_{\infty} < \frac{1}{2n}\rbrace}\int_{\lbrace\vert y-z\vert_{\infty} < \frac{1}{2n}\rbrace} K(w,z) \d w \d z \mathbbm{1}_{\{\vert x- y\vert_{\infty} \ge \frac{2}{n} \}}(x,y).
\end{align*}
It remains to verify assumptions \eqref{K1}, \eqref{K2}, \eqref{CTail}, \eqref{CTail2}, \eqref{Poinc}, \eqref{Sob}, \eqref{eq:extraassnl} and  \autoref{ass:nonlocal} for $\Cn$. Then the assertion follows from \autoref{thm:CLTnl}. \\
First, we denote $\Bn_{\infty}(x) := \{z \in \nZd : \vert x-z \vert_{\infty} < \frac{1}{2n}\}$ and observe that for $\vert x-y \vert_{\infty} > \frac{2}{n}$:
\begin{align*}
\Cns(x,y) &= 4^d n^{-d-\alpha} \dashint_{\Bn_{\infty}(x)} \dashint_{\Bn_{\infty}(y)} K_s(w,z) \d w \d z ,\\
\Cna(x,y) &= 4^d n^{-d-\alpha} \dashint_{\Bn_{\infty}(x)} \dashint_{\Bn_{\infty}(y)} K_a(w,z) \d w \d z. 
\end{align*}
First, we easily see that by \eqref{eq:capproxnlass1} there exist $c_1,c_2 > 0$:
\begin{align*}
c_1 \vert nx-ny \vert^{-d-\alpha} \le \Cns(x,y) \le c_2 \vert nx-ny \vert^{-d-\alpha}, ~~\forall x,y \in \nZd : \vert x-y \vert_{\infty} > \frac{2}{n}.
\end{align*}
From here, one can prove that \eqref{Poinc}, \eqref{Sob} hold true with $\sigma = 4\sqrt{d}$. Moreover, \eqref{CTail}, \eqref{CTail2} are immediate upon noticing that $\Cns(x,y) = 0$, whenever $\vert x-y\vert_{\infty} \le \frac{2}{n}$.\\
\eqref{K1} can be proved as Proposition 2.14 in \cite{MSS18}, estimating for every $x \in \nZd$:
\begin{align*}
n^{\alpha} \sum_{y \in \nZd} \frac{\vert \Cna(x,y)\vert^2}{\Cns(x,y)} &\le  n^{-d} \sum_{y \in \nZd} \frac{\left( \dashint_{\Bn_{\infty}(x)} \dashint_{\Bn_{\infty}(y)} K_a(w,z) \d w \d z \right)^2}{\left( \dashint_{\Bn_{\infty}(x)} \dashint_{\Bn_{\infty}(y)} K_s(w,z) \d w \d z \right)}\\
&\le c n^{-d}\sum_{y \in \nZd}\dashint_{\Bn_{\infty}(x)} \dashint_{\Bn_{\infty}(y)} \frac{\vert K_a(w,z)\vert^2}{K_s(w,z)} \d w \d z\\
&\le c \dashint_{\Bn_{\infty}(x)} \int_{\R^d} \frac{\vert K_a(w,z)\vert^2}{K_s(w,z)} \d w \d z.
\end{align*}
Here, we used \eqref{eq:capproxnlass2}. Therefore, by Jensen's inequality
\begin{align*}
n^{-d} \sum_{x \in \nZd} \left( n^{\alpha} \sum_{y \in \nZd} \frac{\vert \Cna(x,y)\vert^2}{\Cns(x,y)} \right)^{\theta} &\le c n^{-d} \sum_{x \in \nZd} \left( \dashint_{\Bn_{\infty}(x)} \int_{\R^d} \frac{\vert K_a(w,z)\vert^2}{K_s(w,z)} \d w \d z \right)^{\theta}\\
&\le c \int_{\R^d} \left( \int_{\R^d} \frac{\vert K_a(w,z)\vert^2}{K_s(w,z)} \d w \right)^{\theta} \d z,
\end{align*}
which yields \eqref{K1}.
Analogously, one verifies \eqref{eq:extraassnl}.
As assumption \eqref{K2} is immediate from \eqref{eq:capproxnlass1}, it remains to check \autoref{ass:nonlocal}. Note that by the proof of Theorem 2.3 in \cite{HuKa07} it follows that $\Cn \to K$ in $L^1(\mathcal{K})$ for every $\mathcal{K} \subset (\R^d \times \R^d) \setminus \text{diag}$ compact. Their arguments do not require symmetry of $\Cn$ or $K$. Therefore, \eqref{eq:alterass} holds true and \autoref{thm:CLTnl} is applicable.
\end{proof}

We end this section with an example, demonstrating what kind of conductances $(\Cn)$ are admissible for \autoref{thm:CLTnl}.

\begin{example}
Consider the sequence $(\Cn)$, defined by
\begin{align*}
&\Cns(x,y) = g^{(n)}_1(nx,ny)\vert nx-xy\vert^{-d-\alpha} = n^{-d-\alpha} g^{(n)}_1(nx,ny)\vert x-y\vert^{-d-\alpha},\\
&\Cna(x,y) = n^{-d-\alpha}g^{(n)}_2(nx,ny) \left(\vert x-y\vert^{-d-\beta}\mathbbm{1}_{\{\vert x-y \vert \le 1\}}(x,y) + \vert x-y\vert^{-d-\gamma}\mathbbm{1}_{\{\vert x-y \vert > 1\}}(x,y) \right),
\end{align*}
where $g^{(n)}_i : \Z^d \times \Z^d \to [-M_i,M_i]$ for some $M_i > 0$, $i \in \{1,2\}$, and $0 < 2\beta < \alpha < 2\gamma < 2$, such that $g^{(n)}_1(x,y) = g^{(n)}_1(y,x) \ge 0$ and $g^{(n)}_2(x,y) = -g^{(n)}_2(y,x)$ and $\Cn \ge 0$.\\
It is well-known that \eqref{Poinc}, \eqref{Sob}, \eqref{CTail}, \eqref{CTail2} hold. Moreover, \eqref{K1} is satisfied:
\begin{align*}
n^{\alpha} \hspace{-0.3cm}\sum_{y \in \nZd} \frac{\vert \Cna(x,y)\vert^2}{\Cns(x,y)} &\le  \frac{M_2^2}{M_1}\left( n^{2\beta-\alpha} \hspace{-0.3cm}\sum_{h \in \nZd : \vert h \vert \le 1} \hspace{-0.3cm}\vert nh\vert^{-d+(\alpha-2\beta)} + n^{2\gamma - \alpha} \hspace{-0.3cm}\sum_{h \in \nZd : \vert h \vert > 1} \hspace{-0.3cm}\vert nh\vert^{-d-(2\gamma - \alpha)} \right)\\
&\le \frac{c_1M_2^2}{M_1}\left( n^{2\beta-\alpha} \hspace{-0.2cm} \sum_{h \in \Z^d : \vert h \vert \le n} \hspace{-0.2cm} \vert h\vert^{-d+(\alpha-2\beta)} + n^{2\gamma - \alpha} \hspace{-0.2cm} \sum_{h \in \Z^d : \vert h \vert > n} \hspace{-0.2cm} \vert h\vert^{-d-(2\gamma - \alpha)} \right)\\
&\le c_2 < \infty,
\end{align*}
where $c_1,c_2 > 0$ are constants. Note that \eqref{eq:extraassnl} can be deduced from the following computation:
\begin{align*}
n^{\alpha} \sum_{y : \vert x-y \vert < r} \frac{\vert \Cna(x,y)\vert^2}{\Cns(x,y)} \le \frac{c_1M_2^2}{M_1} n^{2\beta-\alpha} \sum_{h \in \Z^d : \vert h \vert \le nr} \vert h\vert^{-d+(\alpha-2\beta)} \le c_3 r^{\alpha-2\beta}, ~~ 0 < r < 1,
\end{align*}
for some $c_3 > 0$.
\end{example}

\begin{example}
A similar computation as before yields \eqref{K1} for $\Cn$ given by
\begin{align*}
\Cns(x,y) &= g^{(n)}(nx,ny)\vert nx-ny \vert^{-d-\alpha},\\
\Cna(x,y) &= (V(x)-V(y))\vert nx-ny \vert^{-d-\alpha}\mathbbm{1}_{\{\vert x-y \vert \le 1\}}(x,y),
\end{align*}
where $g^{(n)} : \nZd \times \nZd \to [\lambda,\Lambda]$ for some $0 < \lambda \le \Lambda < \infty$, $V \in C^{\gamma}(\R^d)$, where $\gamma > \alpha/2$, and $\Cn \ge 0$. Indeed, write $\beta := \alpha-\gamma < \alpha/2$, and note that
\begin{align*}
\vert\Cna(x,y)\vert \le c\vert x-y \vert^{\gamma}\vert nx-ny \vert^{-d-\alpha}\mathbbm{1}_{\{\vert x-y \vert \le 1\}}(x,y) \le c n^{\beta-\alpha}\vert nx-ny \vert^{-d-\beta}\mathbbm{1}_{\{\vert x-y \vert \le 1\}}(x,y),
\end{align*}
where $c > 0$ is some constant. From here, one inserts the same computation as in the previous example and obtains \eqref{K1}.
\end{example}

\end{document}